\documentclass[lettersize,journal]{article}
\usepackage{arxiv}
\usepackage{graphicx}
\usepackage{fullpage}
\usepackage{amsfonts, amsmath}
\usepackage{hyperref} 
\usepackage{cleveref}
\usepackage{subcaption}
\usepackage{xcolor}
\usepackage{amsthm}
\newtheorem{theorem}{Theorem}[section]
\newtheorem{lemma}[theorem]{Lemma}

\usepackage{mathtools}
\usepackage{booktabs}
\usepackage{multirow}

\definecolor{matlab1}{RGB}{0,    114,  189}
\definecolor{matlab2}{RGB}{217,   83,   25}
\definecolor{matlab3}{RGB}{237,  177,   32}
\definecolor{matlab4}{RGB}{126,   47,  142}
\definecolor{matlab5}{RGB}{119,  172,   48}
\definecolor{matlab6}{RGB}{77,   190,  238}
\definecolor{matlab7}{RGB}{162,   20,   47}
\definecolor{matlab7}{RGB}{162,   20,   47}

\usepackage{placeins}

\DeclareMathOperator*{\argmin}{arg\,min} 
\newtheorem{remark}{Remark}

\usepackage{algorithm}      

\usepackage[noend]{algpseudocode} 
\Crefname{ALC@unique}{Line}{Lines}

\newcommand{\norm}[1]{\left\lVert#1\right\rVert}
\usepackage{hyperref}
\newcommand{\email}[1]{\href{mailto:#1}{\texttt{#1}}}

\usepackage[style=numeric,backend=bibtex,giveninits=true,maxbibnames=99,sortcites=true]{biblatex}
\addbibresource{main.bib}

\title{Variable Projected Augmented Lagrangian Methods for Generalized Lasso Problems}
\author{Stefano Aleotti\thanks{University of Insubria, Department of Science and High Technology, Varese, Italy (\email{stefano.aleotti@uninsubria.it}).}\and Davide Bianchi\thanks{Sun Yat-sen University, School of Mathematics (Zhuhai),  Zhuhai, China (\email{bianchid@mail.sysu.edu.cn}).}\and Florian Bo\ss mann\thanks{Harbin Institute of Technology, School of Mathematics, Harbin,  China (\email{f.bossmann@hit.edu.cn}).} 
\and Riley Yizhou Chen\thanks{Emory University, Department of Mathematics, Atlanta, USA(\email{yizhou.chen@emory.edu}).}\and Matthias Chung\thanks{Emory University, Department of Mathematics, Atlanta, USA(\email{matthias.chung@emory.edu}).}}  

\date{\today}

\begin{document}

\maketitle

\begin{abstract}
We introduce \emph{variable projected augmented Lagrangian} (VPAL) methods for solving generalized nonlinear Lasso problems with improved speed and accuracy. By eliminating the nonsmooth variable via soft-thresholding, VPAL transforms the problem into a smooth reduced formulation. For linear models, we develop a preconditioned variant that mimics Newton-type updates and yields significant acceleration. We prove convergence guarantees for both standard and preconditioned VPAL under mild assumptions and show that variable projection leads to sharper convergence and higher solution quality. The method seamlessly extends to nonlinear inverse problems, where it outperforms traditional approaches in applications such as phase retrieval and contrast enhanced MRI (LIP-CAR). Across tasks including deblurring, inpainting, and sparse-view tomography, VPAL consistently delivers state-of-the-art reconstructions, positioning variable projection as a powerful tool for modern large-scale inverse problems.
\end{abstract}

\keywords{variable projection, augmented Lagrangian, generalized Lasso, soft-thresholding, Gauss–Newton preconditioning, total variation, inverse problems, nonlinear optimization, imaging reconstruction, phase retrieval, contrast enhanced MRI}

\begin{MSCcodes}
65K10, 
65J22, 
65F22, 
90C25, 
94A08, 
68T07  
\end{MSCcodes}

\section{Introduction}

Nonlinear optimization problems
\begin{equation} \label{eq:nonlinear_optimization}
    \min_{x,y} \ f(x,y)
\end{equation}
characterized by an objective function $f$ that can be expressed in terms of a set of distinct variables $x$ and $y$, are prevalent in various fields. These problems frequently appear in contexts where one set of variables can be optimized separately from another, enabling computationally efficient strategies in various fields, including signal processing, machine learning, and medical imaging, among others~\cite{nocedal2006numerical,beck2009fast,tibshirani1996regression,wright2015coordinate}.

Dependent on the properties of $f$, various numerical optimization strategies handling \Cref{eq:nonlinear_optimization} exist. A basic approach is to treat all variables jointly and apply standard optimization algorithms~\cite{nocedal2006numerical}. Another widely used strategy includes \emph{alternating optimization}, which iteratively optimizes one set of variables while keeping the other fixed. A prominent instance of this is the \emph{Alternating Direction Method of Multipliers} (ADMM), which alternates between subproblems while introducing dual updates to enforce consistency \cite{boyd2011distributed}. Another approach is \emph{coordinate descent}, which sequentially updates individual variables, often yielding efficient steps when variables are weakly coupled~\cite{wright2015coordinate}. However, both alternating optimization and coordinate descent may suffer from slow convergence, particularly when the variables are strongly coupled. Notably, \emph{variable projection} offers an alternative by eliminating one set of variables analytically, thereby reducing the optimization to a lower-dimensional and often better-conditioned problem.

Optimization methods leveraging \emph{variable projection} have been shown to often offer substantial computational advantages for solving \Cref{eq:nonlinear_optimization}, yet their application remains relatively under-exploited in a broader context. The seminal paper by Golub and Pereyra \cite{golub1973differentiation} introduced the variable projection approach for nonlinear least squares problems of the form
$$\min_{x,y} \quad \|C(y)x - b\|^2_2$$
where the variables $x$ and $y$ can be separated into linear and nonlinear components, $C\colon \mathbb{R}^\ell \to \mathbb{R}^{m\times n}$, with  a linear mapping $C(y)\colon \mathbb{R}^n\to \mathbb{R}^{m}$ and $b \in \mathbb{R}^m$. This method allows for the analytical elimination of one set of variables, transforming the problem into a lower-dimensional space, which may significantly reduces computational complexity as elaborated in \cite{oleary_variable_2013,golub2003separable}.

In this work, we specifically focus on a generalized nonlinear Lasso optimization scenario where the objective function $f$ does not exhibit direct separability into two distinct sets of variables. This generalized nonlinear Lasso problem is defined by the optimization problem
\begin{equation}\label{eq:genLasso}
    \min_x \quad f(x) = \tfrac{1}{2\sigma^2} \left\| A(x) - b \right\|_2^2 + \mu \left\| Dx \right\|_1,
\end{equation}
where $\mu>0 $ is regularization parameter, $A\colon \mathbb{R}^{n} \to \mathbb{R}^m$ where we assume that observations are given by $b = A(x) + \epsilon$, where $\epsilon$ is some additive noise, e.g., $\epsilon \sim \mathcal{N}(0,\sigma^2 I)$. Here, $D \in \mathbb{R}^{\ell \times n}$ is a linear operator, for the standard Lasso approach we have $D = I$. Various other choices for $D$ exist, most prominently discretizations of the total variation or Laplace operator \cite{chung2023variable,tibshirani2011solution}, or structural preserving differential operators~\cite{gilboa2009nonlocal,bianchi2025data}. 

Generalized Lasso is inherently connected to compressed sensing, aiming for recovering sparse signals from far fewer measurements than those required by traditional Nyquist–Shannon sampling criteria \cite{donoho2006compressed,candes2008introduction}. Such connection stems from the generalized Lasso's inherent enforcement of sparsity via regularization. This closely aligns it with the foundational principles of compressed sensing and facilitating robust signal recovery from undersampled data. Thus, generalized Lasso methods are extensively applicable in diverse domains that prioritize sparsity, such as medical imaging, remote sensing, machine learning, statistical data analysis, dictionary learning, and sparse representations in signal processing \cite{beck2009fast,candes2006robust, tibshirani2011solution,mairal2010online}.

\paragraph{Contributions} This work addresses a significant class of generalized nonlinear Lasso optimization problems. A primary contribution lies in extending the variable projection framework to nonlinear settings and introducing an advanced version of the variable projected augmented Lagrangian (\texttt{vpal}) method. Originally proposed in its basic form in 2023~\cite{chung2023variable}, the \texttt{vpal} framework provided a foundation for incorporating variable projection into linear generalized Lasso problems. However, the initial approach was limited by slow convergence due to gradient-based updates. In this work, we present a preconditioning strategy for linear models that significantly accelerates convergence by approximating the Hessian of the reduced problem. This enhancement is particularly effective, as demonstrated in a range of imaging tasks including deblurring, inpainting, and computed tomography.

Furthermore, building on the theoretical convergence results for \texttt{vpal} for nonlinear generalized Lasso problems established in~\cite{solomon2025fast}, we introduce for the first time a nonlinear \texttt{vpal} algorithm. We validate its effectiveness and convergence behavior through real-world applications in phase retrieval from short time Fourier transform measurements and in low-dose to high-dose medical image reconstruction (LIP-CAR), showcasing the method's practical relevance and robustness across complex nonlinear inverse problems.

\paragraph{Structure} The remainder of the paper is organized as follows. In \Cref{sec:background}, we provide background on variable projection and review related optimization strategies for generalized Lasso problems. \Cref{sec:nonlinear_vpal} introduces the variable projected augmented Lagrangian (\texttt{vpal}) method on nonlinear optimization problems, including the preconditioned variant for linear models. \Cref{sec:linear_numEx,sec:nonlinear_numEx} presents numerical experiments, demonstrating the performance of the proposed methods on various imaging problems, including deblurring, inpainting, computed tomography, phase retrieval \Cref{sec:ptycho}, and the LIP-CAR application (\Cref{sec:LIPCAR}). Finally, \Cref{sec:conclusion} summarizes the findings and outlines directions for future research.

\section{Background of Variable projected augmented Lagrangian (\texttt{vpal})}\label{sec:background}
We begin the derivation of \texttt{vpal} algorithm by separating the smooth residual term with the nonsmooth $\ell_1$ regularization term. Using a splitting approach, the surrogate variables $y \in \mathbb{R}^\ell$ is introduced, and \Cref{eq:genLasso} can be written as a constrained optimization problem: 
\begin{equation} \label{eq:lag}   
    \min_{x,y} \quad f(x,y) = \tfrac{1}{2\sigma^2} \left\| A(x) - b \right\|_2^2 + \mu \left\|y  \right\|_1, \qquad \text{s.t.} \quad  Dx -y = 0.
\end{equation}
Letting $c \in \mathbb{R}^\ell$ be the Lagrange multipliers, we obtain the corresponding Lagrange function of \Cref{eq:lag} as 
\begin{equation}    
    L(x, y, c) =  \tfrac{1}{2\sigma^2} \left\| A(x) - b \right\|_2^2 + c^\top (Dx -y) + \mu \left\|y  \right\|_1.
\end{equation}
The augmented Lagrangian framework \cite{nocedal2006numerical} then penalizes nonconformity through an additive quadratic term $\left\|Dx -y\right\|_2^2$ leading to 
\begin{equation}    
    \tilde L_{\text{aug}}(x,y, c) =  \tfrac{1}{2\sigma^2} \left\| A(x) - b \right\|_2^2 + c^\top (Dx -y) + \mu \left\|y  \right\|_1 + \tfrac{\lambda^2}{2} \left\|Dx -y\right\|_2^2
\end{equation}
with penalty parameter $\lambda>0$. By combining the second and last additive term and setting $z = c/\lambda^2$ we get
\begin{equation}\label{eq:aug_lagr}    
    \tilde L_{\text{aug}}(x,y,z) =  \tfrac{1}{2\sigma^2} \left\| A(x) - b \right\|_2^2 + \mu \left\|y  \right\|_1 + \tfrac{\lambda^2}{2} \left\|Dx -y + z\right\|_2^2 - \tfrac{\lambda^2}{2}\left\|z\right\|_2^2.
\end{equation}
In the splitting framework referred to as 
ADMM for minimizing the augmented Lagrangian $L_{\text{aug}}$, the optimization proceeds by alternating updates of the primal variables $x$ and $y$, interleaved with updates of the scaled dual variable $z$, i.e., at iteration $k$ we have an update $z_{k+1} = Dx_{k+1} - y_{k+1} + z_k$, see ~\cite{nocedal2006numerical}. 

A major advantage of the splitting approach is that at iteration $k$ the update of $y$, assuming $x$ and $z$ to be fixed reduces to the soft-thresholding or shrinkage which can be efficiently be computed by
\begin{equation} \label{eq:softTH}
    y_{k+1} =  \text{sign}(Dx_{k+1} + z_k) \odot \text{ReLU}(|Dx_{k+1} + z_k| - \mu/\lambda^2 e),
\end{equation}
where $e$ denotes the vector of ones, \text{ReLU} is the rectifying linear unit, $|\,\cdot\,|$ denotes the element-wise absolute value, and $\odot$ is the Hadamard product. Note that determining the optimal $y$ given $x$ and $z$ in \Cref{eq:softTH} can be seen as a functional relationship
\begin{equation} \label{eq:softTH2}
    y_z(x) =  \text{sign}(Dx + z) \odot \text{ReLU}(|Dx + z| - \mu/\lambda^2 e),
\end{equation} where $y_z\colon\mathbb{R}^n\rightarrow\mathbb{R}^\ell$ is the shrinkage continuous map.
Hence rewriting the objective function in \Cref{eq:aug_lagr} results in the 
\begin{equation}    
    L_{\text{aug}}(x,z) =  \tfrac{1}{2\sigma^2} \left\| A(x) - b \right\|_2^2 + \mu \left\|y_z(x)  \right\|_1 + \tfrac{\lambda^2}{2} \left\|Dx -y_z(x) + z\right\|_2^2 - \tfrac{\lambda^2}{2}\left\|z\right\|_2^2.
\end{equation}
Unlike alternating optimization schemes, \texttt{vpal} method updates $x$ in a descent step implicitly using $y_z(x)$ according to \Cref{eq:softTH2}. Here, a further property of the variable projection approach comes to an advantage, since $y$ is optimal with respect to $x$ any (sub) gradient contribution of $y$ towards $L_{\text{aug}}$ vanishes.  Hence, our iteration is 
\begin{align}
    x_{k+1} &= x_k + \alpha_k s(L_{\text{aug}}, x_k,z_k) \label{eq:xupdate} \\
    z_{k+1} &= Dx_{k+1} - y_{z_k}(x_{k+1}) + z_{k} 
    \label{eq:update_x_z}
\end{align}
where $s$ is an appropriate descent direction of $L_{\text{aug}}(x)$  with corresponding step size $\alpha_k$ and $y$ is given by \Cref{eq:softTH}.

The efficiency of the optimization problem depends on the efficient update of $x$ in \Cref{eq:xupdate}. While investigated was a gradient descent approach other update strategies might be more efficient, e.g., nonlinear conjugate gradient or preconditioning approaches \cite{chung2023variable,solomon2025fast}. The step size selection of $\alpha_k$ is another issue which can be obtained by considering a constant $y$ as a optimal linearized step size. In numerical experiments we observe that this leads to good result. However, optimal step size selection can be also obtained through computationally efficient evaluation (at least in the case for linear $A$).

\section{\texttt{vpal} for nonlinear problems}\label{sec:nonlinear_vpal}
This section presents the update strategy of \texttt{vpal} for the generalized nonlinear Lasso problems and introduces its preconditioned variant.

Note that $z$ and $\lambda$ in \Cref{eq:aug_lagr} are considered constant, thus, it can be reformulated as:
\begin{equation}\label{eq:f_joint}
    \underset{x,y}{\min} \quad f_{\mathrm{joint}}(x,y) = \tfrac{1}{2\sigma^2}\|A(x)-b\|_2^2+\tfrac{\lambda^2}{2}\|Dx-y+z\|_2^2+\mu\|y\|_1.
\end{equation}
Then the projected function corresponding to $f_{\mathrm{joint}}$ is defined as $f_{\mathrm{proj}}\colon\mathbb{R}^n\rightarrow\mathbb{R}$, where
\begin{equation}\label{eq:f_proj}
  f_{\mathrm{proj}}(x) = \tfrac{1}{2\sigma^2}\|A(x)-b\|_2^2+\tfrac{\lambda^2}{2}\|Dx-y_z(x)+z\|_2^2+\mu\|y_z(x)\|_1.
\end{equation}
For the update of $x$ shown in \Cref{eq:xupdate}, two variables need to be specified: the step length $\alpha$ and the descent direction $s$. First, the steepest descent $s = -g$ is given by
\begin{equation}\label{eq:descent_direction}
    g = \nabla_x f_{\mathrm{proj}}(x).
\end{equation}
Denote the Jacobian of $A(x)$ as $J_A(x)$ and the Jacobian of $y_z(x)$ as $J_{y_z}(x)$:
\begin{equation}
    \nabla_x f_{\mathrm{proj}}(x) = \tfrac{1}{\sigma^2}J_A(x)^\top (A(x)-b) + \lambda^2(D-J_{y_z}(x))^\top(Dx-y_z(x)+z) + \mu \nabla_x \left(\|y_z(x)\|_1\,\right).
\end{equation}
The term $\mu \|y_z(x)\|_1$ differentiable away from the coordinate axes is given by
$$
 \mu \nabla_x \|y_z(x)\|_1 = \mu J_{y_z}(x)^\top {\rm sign}(y_z(x))\odot e\,.
$$
Therefore, here,
\begin{align*}
  \nabla_x f_{\mathrm{proj}}(x) =  \tfrac{1}{\sigma^2} J_A(x)^\top(A(x)-b) &+ \lambda^2 D^\top(Dx + z - y_z(x)) \\
  &+ J_{y_z}(x)^\top\!\left(\mu\, \operatorname{sign}(y_z(x))\odot e
  - \lambda^2(Dx - y_z(x) + z)\right)\,.
\end{align*}
This expression can be further simplified, realizing the $y_z(x)$ represents the optimal $y$ for \Cref{eq:f_joint}. To acquire the optimal $y$ in \Cref{eq:f_joint}, we compute $\nabla_y f_{\mathrm{joint}}(x,y)$. Therefore, in particular for $y = y_z$ we have
\begin{equation*}
    \mu\operatorname{ sign}(y_z(x))\odot e-\lambda^2(Dx-y_z(x)+z) = 0\,.
\end{equation*}
In this way, the last term of $\nabla_x f_{\mathrm{proj}}(x)$ vanishes
\begin{equation}
    g = \left(\tfrac{1}{\sigma^2} J_A(x)^\top(A(x)-b) + \lambda^2D^\top(Dx + z- y_z(x))\right) = \nabla_x f_{\mathrm{joint}}(x,y)\,,
\end{equation}
where $y_z(x)$ is the optimal $y$ for $f_{\mathrm{joint}}(x,y)$. A detailed analysis of the nondifferentiability of the shrinkage operator can be found in \cite{solomon2025fast}. 

\label{page:alphaUpdate}
There are various ways to find the step size $\alpha$, and we focus on the optimal step size and linearized optimal step size selection. In the first case we have
\begin{align}\label{eq:optimal_step_size}
\hat{\alpha} = \argmin_{\alpha>0}  \quad & f_{\mathrm{proj}}(x+\alpha s)\nonumber\\
&= \tfrac{1}{2\sigma^2}\|A(x+\alpha s)-b\|_2^2+\tfrac{\lambda^2}{2}\|D(x+\alpha s)-y_z(x+\alpha s)+z\|_2^2 + \mu \|y_z(x+\alpha s)\|_1\,.
\end{align}
There exist multiple iterative algorithms to solve this single variable optimization problem. For example, trust-region methods or quasi-Newton algorithm. If the shrinkage term $y_z(x)$ is treated as the linear term $y$, we obtain the linearized optimal step size selection
\begin{equation}\label{eq:alpha_lin}
    \hat{\alpha} =  \argmin_{\alpha>0}  \tfrac{1}{2\sigma^2}\|A(x+\alpha s)-b\|_2^2+\tfrac{\lambda^2}{2}\|D(x+\alpha s)-y+z\|_2^2.
\end{equation}
Then, $\hat{\alpha}$ is the solution of the following equation:
\begin{equation}
    \tfrac{1}{\sigma^2}s^\top J_A(x+\alpha s)^\top(A(x+\alpha s)-b) + \lambda^2s^\top D^\top(Dx+\alpha Ds-y+z)=0\,. 
\end{equation}
Note that if $A$ is linear, the linear optimal step size has a closed form: 
$$
 \hat{\alpha} = \frac{s^\top(A^\top b-A^\top Ax-\sigma^2\lambda^2 D^\top D x + \sigma^2\lambda^2 D^\top(y-z))}{s^\top(A^\top A+\sigma^2\lambda^2 D^\top D)s}.
$$

Building on the standard \texttt{vpal} framework, we extend the method to \emph{nonlinear} forward models and introduce a \emph{curvature-aware search
direction} that naturally incorporates higher-order information from the projected problem.  Instead of performing a conventional gradient step along
$s$, the update follows an adjusted direction
\[
    s = -P^{-1}g,
\]
where the matrix $P$ acts as a nonlinear preconditioner capturing the local geometry of the reduced objective $f_{\mathrm{proj}}(x)$.  This modification effectively transforms \texttt{vpal} into a higher-order method: it leverages a Gauss–Newton–type approximation of the reduced Hessian to accelerate convergence without requiring the full Hessian of the nonlinear operator $A$ \cite{nocedal2006numerical}.  The resulting preconditioned formulation preserves the simplicity of the original \texttt{vpal} iterations while achieving improved stability and convergence speed as we will show in the following numerical experiments.  A detailed derivation of the preconditioner and its analytical motivation is provided in \Cref{sec:precond}, and the complete algorithm is summarized in \Cref{alg:vpal}.

\begin{algorithm}
    \caption{Nonlinear Variable Projected Augmented Lagrangian (\texttt{vpal})}
    \label{alg:vpal}\small
    \begin{algorithmic}[1]
        \Require $A$, $b$, $D$, $\sigma$, $\mu$, $\lambda$, 
        \State Initialize $x_0 = y_0 = z_0 = 0$, $k = 0$
        \While{not converged} 
           \State Set $j = 0$, $x^0 =x_k$, $y^0 = y_k$, $g^0 = \nabla_xf_{\mathrm{joint}}(x_k,y_k)$
           \While{not converged}
           \State Compute preconditioner $P$
            \State Solve $Ps^j = -g^j$ 
            \State Compute step size $\alpha^j$
            \State Update $x^{j+1} = x^j + \alpha^j s^j$
            \State Update $y^{j+1} = y_z(x^{j+1})$
            \State Compute $g^{j+1} = \nabla_xf_{\mathrm{joint}}(x^{j+1},y^{j+1})$
            \State Set $j = j+1$
           \EndWhile
           \State Set $x_{k+1} = x^j$ and $y_{k+1} = y^j$
           \State Update  $z_{k+1} = Dx_{k+1} - y_{k+1} + z_{k}$
            \State $k \gets k+1$
        \EndWhile
        \Ensure $x_k$ minimizer of $f$
    \end{algorithmic}
\end{algorithm}

We next establish that the fundamental equivalence between the joint and projected formulations persists under the \emph{nonlinear} {\tt vpal} setting (but $P = I$),
providing a solid theoretical foundation for the proposed method. The proofs follow a similar structure of the arguments presented in~\cite{chung2023variable}.

\begin{remark}\label{rem:coercivity}
In the following, we assume that $f_{\mathrm{joint}}$ is strictly convex and coercive, which are standard and natural conditions ensuring the existence of minimizers.
When both $A$ and $D$ are linear, a sufficient condition for these properties is the null-space condition $\ker(A)\cap \ker(D)=\{0\}$; see, for example,~\cite[p.~197]{engl1996regularization}.
For nonlinear $A$, a more involved set of assumptions is derived in~\cite{seidman1989well}.
In \Cref{sec:linear_numEx,sec:nonlinear_numEx}, these hypotheses will be discussed case by case for each  experiment.
\end{remark}

\begin{lemma}\label{lemma1}
    Assume that $A\in C^0(\mathbb{R}^{n}, \mathbb{R}^m)$. Furthermore, assume that $f_{\mathrm{joint}}(x,y)$ is strictly convex and coercive for $\mu=0$. Then for arbitrary but fixed $z$ and $\mu,\lambda>0$, $f_{\mathrm{joint}}(x,y)$ is strictly convex and has a unique minimizer $(\hat{x},\hat{y})$.
\end{lemma}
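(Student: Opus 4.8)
The plan is to exploit the additive structure of the objective by writing
\begin{equation*}
  f_{\mathrm{joint}}(x,y) = f_0(x,y) + \mu\|y\|_1, \qquad
  f_0(x,y) = \tfrac{1}{2\sigma^2}\|A(x)-b\|_2^2 + \tfrac{\lambda^2}{2}\|Dx - y + z\|_2^2,
\end{equation*}
where $f_0$ is precisely the $\mu=0$ instance of $f_{\mathrm{joint}}$ that we are permitted to assume strictly convex and coercive. The added term $\mu\|y\|_1$ is, for $\mu>0$, a nonnegative convex function of the joint variable $(x,y)$, being the $\ell_1$-norm composed with the linear projection $(x,y)\mapsto y$ and scaled by a positive constant. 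First I would establish strict convexity of $f_{\mathrm{joint}}$: the sum of a strictly convex function and a convex function is strictly convex, since for distinct points and $t\in(0,1)$ the strict Jensen inequality for $f_0$ combines with the (non-strict) convexity inequality for $\mu\|y\|_1$ to yield a strict inequality overall. This step is routine and relies only on the assumed strict convexity at $\mu=0$.

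Next I would verify coercivity. Because $\mu\|y\|_1\ge 0$, we have the pointwise bound $f_{\mathrm{joint}}(x,y)\ge f_0(x,y)$; since $f_0(x,y)\to\infty$ as $\|(x,y)\|\to\infty$ by the coercivity hypothesis, the same divergence is inherited by $f_{\mathrm{joint}}$. Hence $f_{\mathrm{joint}}$ is coercive and, in particular, has bounded sublevel sets.

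Finally, existence and uniqueness of the minimizer follow from standard arguments. The continuity hypothesis $A\in C^0$ makes $f_{\mathrm{joint}}$ continuous, so its sublevel sets are closed; together with coercivity they are compact, and the Weierstrass extreme value theorem guarantees that a minimizer $(\hat x,\hat y)$ exists. Uniqueness is then immediate from strict convexity, since two distinct minimizers would force their midpoint to attain a strictly smaller objective value, a contradiction.

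The main obstacle here is not analytical but one of careful bookkeeping: strict convexity is assumed \emph{only} at $\mu=0$, so it must be propagated to $\mu>0$ purely through the (non-strict) convexity of the added $\ell_1$ term. Because $A$ is merely continuous and possibly nonlinear, we cannot independently re-derive convexity of the data-fidelity term and must lean entirely on the $\mu=0$ hypothesis; once that is acknowledged, the remaining steps are elementary applications of the convexity calculus and the Weierstrass theorem.
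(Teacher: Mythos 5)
Your proof is correct and is simply a careful expansion of the argument the paper compresses into one line (``follows immediately from the fundamental properties of strictly convex and coercive functions''): you add the convex, nonnegative term $\mu\|y\|_1$ to the strictly convex, coercive $\mu=0$ objective, preserving both properties, and then invoke Weierstrass plus strict convexity for existence and uniqueness. This is exactly the intended route, just written out in full.
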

\begin{proof}
    The result follows immediately from the fundamental properties of strictly convex and coercive functions \cite{beck2014introduction}. 
\end{proof}
\begin{lemma}\label{lemma:3Statements}
    The following statements hold: 
    \begin{enumerate}
        \item  For any $x$, there exists $y$ such that $f_{\mathrm{proj}}(x)=f_{\mathrm{joint}}(x,y)$.
        \item  For any $y$ and arbitrary $x$, the inequality $f_{\mathrm{proj}}(x)\leq f_{\mathrm{joint}}(x,y)$ holds.
        \item  Let $(\hat{x},\hat{y})$ be the (unique) global minimizer of $f_{\mathrm{joint}}$. Then, $\hat{x}$ is the (unique) global minimizer of $f_{\mathrm{proj}}$.
  \end{enumerate}
\end{lemma}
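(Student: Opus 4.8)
The plan is to organize everything around a single key fact: for every fixed $x$, the shrinkage map $y_z(x)$ defined in \Cref{eq:softTH2} is the unique minimizer of $y\mapsto f_{\mathrm{joint}}(x,y)$. This holds because the data term $\tfrac{1}{2\sigma^2}\|A(x)-b\|_2^2$ is constant in $y$, so minimizing over $y$ reduces to the separable problem $\min_y \tfrac{\lambda^2}{2}\|Dx-y+z\|_2^2+\mu\|y\|_1$, whose coordinatewise solution is exactly the soft-thresholding formula; the quadratic penalty makes this objective strictly convex in $y$, so the minimizer is unique. Crucially, this per-$x$ strict convexity in $y$ is elementary and does not rely on the joint hypotheses of \Cref{lemma1}. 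Once this fact is established, the three statements follow in order, the first two almost immediately.

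For statement 1, I would take $y=y_z(x)$ and observe that the definitions of $f_{\mathrm{proj}}$ in \Cref{eq:f_proj} and $f_{\mathrm{joint}}$ in \Cref{eq:f_joint} then coincide term by term, giving $f_{\mathrm{proj}}(x)=f_{\mathrm{joint}}(x,y_z(x))$. For statement 2, since $y_z(x)$ minimizes $f_{\mathrm{joint}}(x,\cdot)$, for any $y$ we have $f_{\mathrm{proj}}(x)=f_{\mathrm{joint}}(x,y_z(x))=\min_{y'}f_{\mathrm{joint}}(x,y')\le f_{\mathrm{joint}}(x,y)$.

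Statement 3 is the substantive one and I would split it into existence and uniqueness. By \Cref{lemma1} the joint minimizer $(\hat{x},\hat{y})$ exists and is unique; since $\hat{y}$ must minimize $f_{\mathrm{joint}}(\hat{x},\cdot)$, the key fact forces $\hat{y}=y_z(\hat{x})$ and hence $f_{\mathrm{proj}}(\hat{x})=f_{\mathrm{joint}}(\hat{x},\hat{y})$. For any $x$, combining statement 1 with the global optimality of $(\hat{x},\hat{y})$ yields $f_{\mathrm{proj}}(x)=f_{\mathrm{joint}}(x,y_z(x))\ge f_{\mathrm{joint}}(\hat{x},\hat{y})=f_{\mathrm{proj}}(\hat{x})$, so $\hat{x}$ is a global minimizer of $f_{\mathrm{proj}}$. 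For uniqueness I would argue by contradiction: if $x^\ast$ also minimizes $f_{\mathrm{proj}}$, then $(x^\ast,y_z(x^\ast))$ attains the value $f_{\mathrm{joint}}(\hat{x},\hat{y})$ and is therefore itself a global minimizer of $f_{\mathrm{joint}}$; the uniqueness in \Cref{lemma1} then forces $x^\ast=\hat{x}$.

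The only real obstacle is the careful verification of the key fact, namely that soft-thresholding solves the proximal subproblem uniquely; everything else is short bookkeeping that transfers optimality back and forth through the identity $f_{\mathrm{proj}}(x)=f_{\mathrm{joint}}(x,y_z(x))$. I would be mindful that statement 3 needs the \emph{uniqueness} of the joint minimizer, not merely its existence, since the contradiction argument for uniqueness of the projected minimizer rests on it; this is precisely what the strict convexity hypothesis of \Cref{lemma1} supplies.
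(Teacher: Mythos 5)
Your proposal is correct and follows essentially the same route as the paper's proof: both rest on the fact that $y_z(x)$ minimizes $f_{\mathrm{joint}}(x,\cdot)$, derive statements 1 and 2 directly from it, and prove statement 3 by transferring optimality through $f_{\mathrm{proj}}(x)=f_{\mathrm{joint}}(x,y_z(x))$ with the same contradiction argument for uniqueness. Your version is marginally tidier in that you identify $\hat{y}=y_z(\hat{x})$ up front to get $f_{\mathrm{proj}}(\hat{x})=f_{\mathrm{joint}}(\hat{x},\hat{y})$ directly, where the paper reaches the same equality by sandwiching between the two inequalities, and you make explicit the (correct) observation that uniqueness of the $y$-subproblem minimizer follows from strict convexity in $y$ alone.
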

\smallskip
\begin{proof}
    $\,$ \\[-2ex]
     \begin{enumerate}
     \item This follows directly from the definition of \Cref{eq:f_joint,eq:softTH2,eq:f_proj}, which is if $y = y_z(x)$, then $f_{\mathrm{proj}}(x)=f_{\mathrm{joint}}(x,y)$.
     \item By definition, 
     $$
     y_z(x) =\argmin_y   f_{\mathrm{joint}}(x,y) =  \tfrac{1}{2\sigma^2} \left\| A(x) - b \right\|_2^2 + \mu \left\|y  \right\|_1 + \tfrac{\lambda^2}{2} \left\|Dx -y + z\right\|_2^2.
     $$
     Therefore,
     \begin{align*}
         f_{\mathrm{proj}}(x) &=   \tfrac{1}{2\sigma^2} \left\| A(x) - b \right\|_2^2 + \mu \left\|y_z(x)  \right\|_1 + \tfrac{\lambda^2}{2} \left\|Dx -y_z(x) + z\right\|_2^2 \\
         &\leq  f_{\mathrm{joint}}(x,y).\\
     \end{align*}
     \item By \Cref{eq:f_proj}, 
     $$
     f_{\mathrm{proj}}(x) = f_{\mathrm{joint}}(x,y_z(x)) \geq f_{\mathrm{joint}}(\hat{x},\hat{y})\,, \quad \forall x\in \mathbb{R}^n.
     $$
    Therefore, $f_{\mathrm{proj}}(\hat{x})\geq f_{\mathrm{joint}}(\hat{x},\hat{y})$. By \Cref{lemma:3Statements} item $2$,
    $$
    f_{\mathrm{proj}}(\hat{x}) \leq  f_{\mathrm{joint}}(\hat{x},\hat{y}).
    $$
    Therefore, $f_{\mathrm{proj}}(\hat{x}) = f_{\mathrm{joint}}(\hat{x},\hat{y})$.
    Following from \Cref{eq:f_proj},
    $$
    f_{\mathrm{proj}}(x) \geq f_{\mathrm{joint}}(\hat{x},\hat{y}) = f_{\mathrm{proj}}(\hat{x}) \,, \quad \forall x\in \mathbb{R}^n.
    $$
    $\hat{x}$ is the global minimizer of $f_{\mathrm{proj}}(x)$.
    Moreover, since $y_z(\hat{x})$ minimizes $f_{\mathrm{joint}}(\hat{x},\cdot)$ with respect to the second component and $y_z(\hat{x})$ is its unique minimizer, we have $y_z(\hat{x}) = \hat{y}$. To prove the uniqueness, assume that that $(\hat{x},\hat{y})$ is the unique global minimizer of $f_{\mathrm{joint}}(x,y)$, but there exists $x^*\neq \hat{x}$ such that $f_{\mathrm{proj}}(\hat{x}) = f_{\mathrm{proj}}(x^*)\leq f_{\mathrm{proj}}(x), \forall x\in\mathbb{R}^n$. Therefore,
    $$
    f_{\mathrm{joint}}(\hat{x},\hat{y}) = f_{\mathrm{joint}}(\hat{x},y_z(\hat{x})) = f_{\mathrm{proj}}(\hat{x}) = f_{\mathrm{proj}}(x^*) = f_{\mathrm{joint}}(x^*,y_z(x^*)).
    $$
    In this case, $(x^*,y_z(x^*))\neq (\hat{x},y_z(\hat{x}))$ is another global minimizer of $f_{\mathrm{joint}}(x,y)$, which contradicts the assumption. Therefore, $\hat{x}$ is the unique global minimizer of $f_{\mathrm{proj}}(x)$.
      \end{enumerate}
\end{proof}

\begin{lemma}
Assume $A\in C^0(\mathbb{R}^{n}, \mathbb{R}^m)$ and $f_{\mathrm{joint}}(x,y)$ is strictly convex and coercive for $\mu=0$. Further assume $z$ is fixed and $\mu,\lambda>0$; then all minimizers of $ f_{\mathrm{proj}}(x)$ are global.
\end{lemma}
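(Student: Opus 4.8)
The plan is to avoid arguing about the convexity of $f_{\mathrm{proj}}$ directly---which is delicate because the shrinkage map $y_z$ is only Lipschitz and nonsmooth---and instead to \emph{lift} an arbitrary local minimizer of $f_{\mathrm{proj}}$ to a local minimizer of the jointly strictly convex function $f_{\mathrm{joint}}$. Once lifted, the conclusion is immediate, since for a convex function every local minimizer is global, and by \Cref{lemma1} the minimizer of $f_{\mathrm{joint}}$ is in fact unique.

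Concretely, let $x^\ast$ be any local minimizer of $f_{\mathrm{proj}}$, so that $f_{\mathrm{proj}}(x)\ge f_{\mathrm{proj}}(x^\ast)$ for all $x$ in some neighborhood $U$ of $x^\ast$, and set $y^\ast=y_z(x^\ast)$. The first step is to show that $(x^\ast,y^\ast)$ is a local minimizer of $f_{\mathrm{joint}}$. To this end I would pick a neighborhood $V$ of $(x^\ast,y^\ast)$ in $\mathbb{R}^n\times\mathbb{R}^\ell$ whose projection onto the $x$-coordinate lies inside $U$; then for every $(x,y)\in V$ the chain
$$f_{\mathrm{joint}}(x,y)\ \ge\ f_{\mathrm{proj}}(x)\ \ge\ f_{\mathrm{proj}}(x^\ast)\ =\ f_{\mathrm{joint}}(x^\ast,y^\ast)$$
holds, where the first inequality is the second statement of \Cref{lemma:3Statements}, the second uses $x\in U$, and the final equality is the first statement of \Cref{lemma:3Statements} applied at $x^\ast$. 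Hence $(x^\ast,y^\ast)$ minimizes $f_{\mathrm{joint}}$ locally.

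The second step invokes \Cref{lemma1}: since $f_{\mathrm{joint}}$ is strictly convex, every local minimizer is the unique global minimizer $(\hat x,\hat y)$. Therefore $(x^\ast,y^\ast)=(\hat x,\hat y)$, and in particular $x^\ast=\hat x$. By the third statement of \Cref{lemma:3Statements}, $\hat x$ is the (unique) global minimizer of $f_{\mathrm{proj}}$, so the arbitrary local minimizer $x^\ast$ is indeed global, which is exactly the claim.

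The only genuinely delicate point is the lifting step, i.e. verifying that a local optimality certificate for $f_{\mathrm{proj}}$ at $x^\ast$ transfers to one for $f_{\mathrm{joint}}$ at $(x^\ast,y_z(x^\ast))$; the sandwich inequality above makes this transparent and sidesteps any direct regularity analysis of $y_z$. As an alternative route one could instead prove that $f_{\mathrm{proj}}(x)=\inf_y f_{\mathrm{joint}}(x,y)$ is convex, using the standard fact that partial minimization of a jointly convex function is convex (the infimum being attained and finite by the strict convexity and coercivity assumed in \Cref{lemma1}), and then apply the elementary property that local minimizers of convex functions are global. I expect the lifting argument to be shorter and to align better with the inequality-based structure already used for \Cref{lemma:3Statements}.
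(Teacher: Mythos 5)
Your proof is correct, and it takes a genuinely different route from the paper's. The paper argues by contradiction at the level of $f_{\mathrm{proj}}$: given a local minimizer $\bar{x}\neq\hat{x}$, it fixes $\bar{y}=y_z(\bar{x})$ and uses strict convexity of $x\mapsto f_{\mathrm{joint}}(x,\bar{y})$ to produce a point $x^*$ arbitrarily close to $\bar{x}$ with $f_{\mathrm{joint}}(x^*,\bar{y})<f_{\mathrm{joint}}(\bar{x},\bar{y})$, and then pushes this down to $f_{\mathrm{proj}}$ via the sandwich inequalities of \Cref{lemma:3Statements}. You instead lift the local minimizer upward: the chain $f_{\mathrm{joint}}(x,y)\ge f_{\mathrm{proj}}(x)\ge f_{\mathrm{proj}}(x^\ast)=f_{\mathrm{joint}}(x^\ast,y_z(x^\ast))$ on a product neighborhood shows $(x^\ast,y_z(x^\ast))$ is a local, hence (by convexity) the unique global, minimizer of $f_{\mathrm{joint}}$, and \Cref{lemma:3Statements}(3) finishes. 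Both arguments rest on the same two sandwich inequalities, but yours buys something real: it avoids the one under-justified step in the paper, namely the assertion that for \emph{every} $\bar{x}\neq\hat{x}$ the partial function $x\mapsto f_{\mathrm{joint}}(x,\bar{y})$ admits a strictly better point near $\bar{x}$ --- this fails if $\bar{x}$ happens to minimize that partial function, a case the paper does not rule out (it can be ruled out, since a coordinate-wise minimizer of a convex ``smooth plus separable nonsmooth'' objective is a global one, but that requires an extra argument). Your lifting argument needs no such repair and uses only the elementary fact that local minimizers of convex functions are global; your suggested alternative via convexity of the partial infimum would work equally well but is not shorter.
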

\begin{proof}
    Denote $(\hat{x},\hat{y}) =(\hat{x},y_z(\hat{x})) $ as the unique global minimizer of $f_{\mathrm{joint}}(x,y)$ according to \Cref{lemma1}. Let $\bar{x}$ to be a local minimizer of $f_{\mathrm{proj}}(x)$.  Case $1$ is when $\bar{x} = \hat{x}$. Then, $ f_{\mathrm{proj}}(\bar{x}) =  f_{\mathrm{proj}}(\hat{x}) = f_{\mathrm{joint}}(\hat{x},y_z(\hat{x}))$. Case $2$ is when $\bar{x} \neq \hat{x}$. We know that $f_{\mathrm{joint}}(x,y)$ is strictly convex in $x$ and $y$. Fix $\bar{y}\in \mathbb{R}^n$, where $\bar{y} = y_z(\bar{x})$. The strict convexity in $x$ of $f_{\mathrm{joint}}(x,y)$ suggests that for any $\bar{x}\in \mathbb{R}^n$ with $\bar{x}\neq\hat{x}$ and $\epsilon >0$, $\exists x^*$ with $x^*\neq \bar{x}$ and $\|x^*-\bar{x}\|_2$ such that 
    $$
    f_{\mathrm{joint}}(x^*,\bar{y}) < f_{\mathrm{joint}}(\bar{x},\bar{y}) = f_{\mathrm{proj}}(\bar{x})\,.
    $$
    By Lemma \Cref{lemma:3Statements} item $1$, we have $f_{\mathrm{proj}}(x^*) = f_{\mathrm{joint}}(x^*,y_z(x^*))$, and by Lemma \Cref{lemma:3Statements} item $2$, we have 
    $f_{\mathrm{joint}}(x^*,y_z(x^*)) \leq f_{\mathrm{joint}}(x^*,\bar{y})$. Therefore
    $$
    f_{\mathrm{proj}}(x^*) = f_{\mathrm{joint}}(x^*,y_z(x^*))\leq f_{\mathrm{joint}}(x^*,\bar{y}) < f_{\mathrm{joint}}(\bar{x},\bar{y}) = f_{\mathrm{proj}}(\bar{x})\,.
    $$
    Thus, there does not exists a neighborhood $\mathcal{D}(\bar{x})$ which $f_{\mathrm{proj}}(x)\geq f_{\mathrm{proj}}(\bar{x}), \forall x\in \mathcal{D}(\bar{x})$. Thus, $\bar{x}\neq \hat{x}$ is not a local minimizer. This contradiction shows that all local minimizers are global.
    \end{proof}

\begin{theorem}
Assume $A\in C^1(\mathbb{R}^{n}, \mathbb{R}^m)$
Suppose that the joint objective $f_{\mathrm{joint}}(x, y)$ is strictly convex and coercive for $\mu = 0$, and that $\mu, \lambda > 0$ are chosen sufficiently large. Further, assume that the matrix $P$ is symmetric positive definite, and that there exists a constant $\beta > 0$ such that the angle condition
$
    -\frac{g^\top s}{\|g\| \cdot \|s\|} \geq \beta
$
holds uniformly across all iterations. Let $\alpha$ be the exact step size \Cref{eq:optimal_step_size}, then the iterates produced by \Cref{alg:vpal} converge to the unique minimizer $\hat{x}$ of the generalized Lasso problem \Cref{eq:genLasso}.
\end{theorem}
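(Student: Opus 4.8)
The plan is to decouple the analysis into the inner descent loop, which for a frozen dual variable $z_k$ minimizes the projected objective $f_{\mathrm{proj}}(\cdot\,;z_k)$, and the outer dual update, which I would identify as the classical method of multipliers. The first fact to record is that, although the shrinkage map $y_z$ is only Lipschitz, the projected objective is continuously differentiable: by \Cref{lemma1} the inner minimization over $y$ has a unique solution $y_z(x)$, so an envelope argument shows $f_{\mathrm{proj}}\in C^1$ with $\nabla f_{\mathrm{proj}}(x)=g=\nabla_x f_{\mathrm{joint}}(x,y_z(x))$, exactly the expression derived above; continuity of $g$ follows from $A\in C^1$ together with continuity of $y_z$. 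Combined with \Cref{lemma:3Statements} and the subsequent lemma, this furnishes, for each fixed $z_k$, a $C^1$, strictly convex, coercive function whose unique stationary point is its unique global minimizer $\hat{x}(z_k)$.

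For the inner loop I would invoke the standard global convergence theory of line-search methods. Since $P$ is symmetric positive definite, $s=-P^{-1}g$ satisfies $g^\top s=-g^\top P^{-1}g<0$ whenever $g\neq 0$, so $s$ is a genuine descent direction, and the assumed angle condition $-g^\top s/(\|g\|\,\|s\|)\geq\beta>0$ bounds it away from orthogonality to the gradient uniformly. Working on the compact sublevel set determined by the initial iterate (compact by coercivity), where $\nabla f_{\mathrm{proj}}$ is uniformly continuous, the exact step size \Cref{eq:optimal_step_size} yields a Zoutendijk-type sufficient-decrease estimate $\sum_j \cos^2\theta_j\,\|g^j\|^2<\infty$; since $\cos\theta_j\geq\beta$ this forces $\|g^j\|\to 0$. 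Boundedness of the inner iterates gives limit points, continuity of $\nabla f_{\mathrm{proj}}$ makes every such point stationary, and uniqueness of the stationary point upgrades this to convergence of the whole inner sequence to $\hat{x}(z_k)$. Thus the inner loop returns the exact joint minimizer $(x_{k+1},y_{k+1})$ of $L_{\mathrm{aug}}(\cdot,\cdot,z_k)$.

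The final step is to recognize the outer recursion as the augmented Lagrangian (method of multipliers) iteration for the convex, affinely constrained problem \Cref{eq:lag}: the inner loop performs the joint primal minimization $\min_{x,y}L_{\mathrm{aug}}(x,y,z_k)$, and \Cref{eq:update_x_z} is precisely the scaled dual ascent $z_{k+1}=z_k+(Dx_{k+1}-y_{k+1})$. Because the objective is convex and coercive and the constraint $Dx-y=0$ is affine, strong duality holds with no further constraint qualification, so the method of multipliers converges for any fixed penalty $\lambda>0$; the role of taking $\mu,\lambda$ sufficiently large is to secure the strict convexity and coercivity assumed above and to keep each subproblem well conditioned. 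Invoking this convergence, the primal iterates $x_k$ converge to a solution of \Cref{eq:lag}, which by $\|y\|_1=\|Dx\|_1$ at feasibility together with \Cref{lemma:3Statements} is the unique minimizer $\hat{x}$ of \Cref{eq:genLasso}.

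I anticipate the main obstacle to be the rigorous coupling of the two loops: the clean method-of-multipliers convergence statements presuppose that each subproblem is solved \emph{exactly}, so I would either argue that the inner loop attains $\hat{x}(z_k)$ in the limit and treat the nested scheme accordingly, or quantify the inner inexactness and appeal to inexact augmented Lagrangian results. A secondary delicate point is confirming that $f_{\mathrm{proj}}$ is genuinely $C^1$ across the nonsmooth kink set of the soft-thresholding operator, so that the Zoutendijk argument applies globally rather than only away from the coordinate axes; the envelope argument resolves this, but it must be stated with care.
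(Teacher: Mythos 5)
Your proposal is correct and follows essentially the same route as the paper: the paper likewise splits the argument into outer augmented-Lagrangian (method-of-multipliers) convergence, cited from the literature, and inner line-search convergence via descent direction, angle condition, and exact step size, with accumulation points being stationary and strict convexity yielding the unique minimizer. Your treatment is simply more explicit about the steps the paper compresses --- the envelope argument for $C^1$-smoothness of $f_{\mathrm{proj}}$, the Zoutendijk estimate, and the exact-versus-inexact inner-solve issue --- all of which the paper handles by citation or leaves implicit.
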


\begin{proof}
    The convergence of the outer loop of \Cref{alg:vpal} is proven for augmented Lagrangian methods \cite{boyd2011distributed}. Thus, it remains to be shown that the variable projection in the inner loop solves 
    \begin{equation}\label{eq:pf}
    (x_{k+1},y_{k+1}) = \arg\min_{x,y} \ \tilde L_{\mathrm{aug}}(x,y,z_k)\,.
    \end{equation}
    Note that the inner loop utilizes a line search method. Recall in \Cref{sec:nonlinear_vpal}, we showed that $g= \nabla f_{\mathrm{proj}}(x)$. With $P$ being SPD, it is well know $s = -P^{-1}g$ is a descent direction for $f_{\mathrm{proj}}(x)$. 
    The exact step size means the step size is efficient. Thus, the three conditions ensure that every accumulation point of ${x_k}$ generated by the inner loop is a stationary point of $f_{\mathrm{proj}}(x)$ \cite{geiger1999numerische}. Since $f_{\mathrm{proj}}(x)$ is strictly convex for $\mu=0$, the inner loop converges to the unique global minimizer of $f_{\mathrm{proj}}(x)$. Then, we updated the optimal $y$ using the soft-thresholding formula $y_z(x)$, ensuring that \Cref{eq:pf} is satisfied. Thus, \Cref{alg:vpal} converges to the unique minimizer of \Cref{eq:genLasso}.
\end{proof}

We note that the inner iteration (lines 4–11 in \Cref{alg:vpal}) can be solved inexactly; in fact, our numerical experiments indicate that a single iteration of the inner loop is often sufficient.

\subsection{Preconditioned \texttt{vpal}}\label{sec:precond}
In the previous section, we established the convergence theory for update directions of the form $ s = -P^{-1}g$, where $P$ is a symmetric positive definite matrix. This generalized search direction encompasses both standard gradient descent and curvature-informed updates. Motivated by this flexibility, we now introduce a specific choice of $P$ that leverages second-order information of the reduced problem, leading to an accelerated variant of the \texttt{vpal} algorithm.

We refer to this new method as the \emph{preconditioned \texttt{vpal}} (\texttt{pvpal}), as it replaces the steepest descent step on the variable $x$ with a preconditioned update. The underlying idea is inspired by classical optimization theory: convergence speed is often enhanced when the preconditioner approximates the Hessian of the objective function, ideally reproducing the behavior of Newton’s method.

However, designing such a preconditioner is nontrivial in this setting. The objective $f_{\mathrm{proj}}(x)$ includes the nonsmooth term $\mu\|y_z(x)\|_1$, complicating direct computation of second-order derivatives. Nonetheless, by exploiting the optimality conditions satisfied by $y_z(x)$ and utilizing a smooth approximation of the shrinkage operator, we derive an efficient and analytically motivated preconditioner that captures the local curvature of the reduced problem. The resulting method retains the simplicity of the original \texttt{vpal} scheme while significantly improving its convergence behavior, as we demonstrate in the numerical experiments that follow.

The gradient of $f_{\mathrm{proj}}(x)$ for the \emph{linear} case can be expressed as
\begin{equation}\label{eq:gradient}
  \nabla_x f_{\mathrm{proj}}(x) = \tfrac{1}{\sigma^2}A^\top(Ax-b) + \lambda^2 D^\top(Dx-y_z(x)+z).
\end{equation}

By combining differentiation and subdifferentiation with respect to $x$, we obtain that the Hessian operator is defined as
\begin{equation}\label{eq:true_Hessian}
    H(x) = \tfrac{1}{\sigma^2}A^\top A+\lambda^2D^\top D-\lambda^2D^\top\partial_x y_z(x),
\end{equation}
where $\partial_x y_z(x)$ denotes the subdifferential of $y_z(x)$ and, in particular, $\partial_x y_z(x)  = J_{y_z}(x)$ at points where $y_z(x)$ is differentiable. 

Recall that $y_z(x)$ is the soft-thresholding operator applied component-wise to the vector $Dx+z$, see \Cref{eq:softTH2}. Consequently, it is possible to rewrite the Hessian operator as
\begin{equation*}
    H(x) = \tfrac{1}{\sigma^2}A^\top A + \lambda^2D^\top D - \lambda^2D^\top\partial_x\mathcal{T}_{\frac{\mu}{\lambda^2}}(Dx+z)D,
\end{equation*}
where $\mathcal{T}_{\frac{\mu}{\lambda^2}}$ denotes the soft-thresholding operator with threshold parameter $\frac{\mu}{\lambda^2}$.  

To exploit this structure, we consider a smooth approximation of the soft-thresholding operator. Consider, for instance, the simpler case where $q(x)=|x|$ with $x\in\mathbb{R}$. Then, the Huber approximation of $q(x)$ is defined by
\begin{equation*}
    h(x) = \begin{cases}
        \tfrac{1}{2}x^2, & |x|\le \delta, \\
        \delta(|x|-\tfrac{1}{2}\delta), & \text{otherwise},
    \end{cases}
\end{equation*}
which smooths the non-differentiability of $q$ in a neighborhood of $0$.  

Since $\mathcal{T}_{\frac{\mu}{\lambda^2}}(\cdot)$ acts component-wise, we can apply the same idea to smooth its non-differentiable points at $\pm \tfrac{\mu}{\lambda^2}$. Let $\zeta=\tfrac{\mu}{\lambda^2}$ be the threshold parameter, and let $\epsilon>0$. We define the smoothed operator $S_\epsilon:\mathbb{R}^n\to\mathbb{R}^n$ component-wise as
\begin{equation*}
    [S_{\epsilon}(x)]_i=\begin{cases}
        \epsilon(x_i-\zeta-\tfrac{1}{2}\epsilon), & x_i>\zeta+\epsilon, \\[6pt]
        \tfrac{1}{2}(x_i-\zeta)^2, & \zeta < x_i \le \zeta + \epsilon, \\[6pt]
        0, & -\zeta \le x_i \le \zeta, \\[6pt]
        -\tfrac{1}{2}(x_i+\zeta)^2, & -\zeta-\epsilon \le x_i < -\zeta, \\[6pt]
        -\epsilon(-x_i-\zeta-\tfrac{1}{2}\epsilon), & x_i<-\zeta-\epsilon.   
    \end{cases}
\end{equation*}

The operator $S_\epsilon$ thus provides a smooth approximation of $y_z(x)$, with the smoothing controlled by $\epsilon$. In this way, the Jacobian operator results in a diagonal matrix $J_\epsilon\in\mathbb{R}^{n\times n}$ whose entries are defined as
\begin{equation*}
    [J_{\epsilon}(x)]_{i,i} = \begin{cases}
        \epsilon, & x_i>\zeta+\epsilon, \\[6pt]
        x_i-\zeta, & \zeta < x_i \le \zeta + \epsilon, \\[6pt]
        0, & -\zeta \le x_i \le \zeta, \\[6pt]
        x_i+\zeta, & -\zeta-\epsilon \le x_i < -\zeta, \\[6pt]
        \epsilon, & x_i<-\zeta-\epsilon.   
    \end{cases}
\end{equation*}

Thus, given $\epsilon>0$, we can approximate the Hessian of $f_{\mathrm{proj}}(x)$ as
\begin{equation}
    \tilde{H}(x) = \frac{1}{\sigma^2}A^\top A + \lambda^2D^\top(I-J_{\epsilon}(Dx+z))D.
\end{equation}

From a theoretical standpoint, the linear operator $\tilde{H}$ must be invertible. 
This property is not guaranteed if the matrix $(I - J_{\epsilon}(Dx + z))$ contains negative entries along its diagonal. 
However, since the largest diagonal entry of $J_{\epsilon}(Dx + z)$ is $\epsilon$, it is sufficient to impose the condition $\epsilon < 1$. 
This requirement is natural, as $\epsilon$ also controls the approximation accuracy of the Hessian. 
Under this assumption, the operator $D^\top (I - J_{\epsilon}(Dx + z)) D$ is positive definite. 
Moreover, if $\mathcal{K}(A) \cap \mathcal{K}(D) = \{0\}$, then the matrix $\tilde{H}$ is symmetric positive definite and therefore invertible and consistent with our theoretical investigations in \Cref{sec:nonlinear_vpal}.  

Combining all the above results, the update rule for the variable $x$ in the {\tt pvpal} method is given by
\begin{equation}
    x_{k+1} = x_k + \alpha_k\, \tilde{H}^{-1}(x_k)\nabla_x f_{\mathrm{proj}}(x_k),
\end{equation}
where $\alpha_k$ denotes the step size, which can be estimated using the same strategies employed in the plain {\tt vpal} method and discussed on page~\pageref{page:alphaUpdate}.

\section{Linear Numerical Experiments}\label{sec:linear_numEx}
Next, we present a selection of classical linear numerical experiments addressing various imaging problems. Specifically, we compare the performance of the {\tt vpal} method and its preconditioned counterpart, {\tt pvpal}, across different settings: deblurring, inpainting, and Computed Tomography (CT). In all experiments, the regularization operator $D$ is chosen as the finite difference operator. The two algorithms are evaluated using two distinct strategies for selecting the step size $ \alpha$: a \textit{linearized} approach and an \textit{optimal} one. Accordingly, we refer to the linearized and optimal variants of each algorithm throughout the discussion.

About the assumptions discussed in~\Cref{rem:coercivity}, let us first highlight that  $\ker(D) = \{t\, e \mid t\in \mathbb{R}\}$, where $e$ denotes the constant vector of ones. Therefore, the null space condition is equivalent to requiring $e \notin \ker(A)$. This condition is trivially satisfied in the deblurring and inpainting experiments (\Cref{ssec:deblurring,ssec:inpainting}).  For the CT case (\Cref{ssec:CT}), where $A$ is the discrete Radon transform, we have $Ae\neq 0$ for any nontrivial acquisition geometry, since each row of $A$ corresponds to a line integral through the image domain; see \cite[Chapter~1.4]{scherzer2009variational}.

\subsection{Experiment 1: Deblurring Peppers}\label{ssec:deblurring}
In this first experiment, we consider an image deblurring problem where the true image has size $256 \times 256$ and it depicts a group of peppers in a black and white scenario. The image is degraded using a motion blur Point Spread Function (PSF) and corrupted with 1\% additive white Gaussian noise. Since the PSF has a support of approximately 10 pixels, the resulting blurred image is cropped to mitigate the effects of boundary conditions, yielding a final observed image of size $237 \times 237$. \Cref{fig:Peppers setting} illustrates the original image, the PSF used for blurring, and the corresponding blurred and noisy observation.

To evaluate the performance of the {\tt vpal} method and its preconditioned counterpart {\tt pvpal}, we set the regularization parameter to $\mu = 10^{-2}$. The augmented Lagrangian penalty parameter is chosen as $\lambda = 0.5$ for both methods and we let the algorithms run for a total of 200 iterations.

\Cref{fig:Peppers RRE and time} presents the Relative Reconstruction Error (RRE) achieved by each method at every iteration, as well as its progression over time (in seconds), for both the linearized and optimal step size strategies. Specifically, the first row illustrates the RRE behavior with respect to the number of iterations and computation time using the optimal step size strategy, while the second row focuses on the linearized approach. In both cases, the use of a preconditioner clearly enhances the convergence speed, requiring fewer iterations and less time to reach the same reconstruction quality compared to the standard method. To further highlight this acceleration effect, our main interest, \Cref{tab:Peppers comparison} summarizes the reconstruction quality and the computation time needed to achieve it, again for both step size strategies. The first two columns report the results of each method after 200 iterations. The third column indicates the number of {\tt pvpal} iterations required to match the runtime of 200 {\tt vpal} iterations, while the fourth column shows the number of {\tt pvpal} iterations needed to reach an RRE comparable to that of 200 {\tt vpal} iterations.  Additionally, \Cref{fig:Peppers reconstructions} shows the reconstructions obtained with both methods under equal computation time conditions. 
More specifically, the first column reports the reconstructions obtained after $0.2$ seconds of running time, 
the second column shows the results obtained after $0.5$ seconds, 
and the last one corresponds to approximately $1$ second. 
We can observe that the {\tt pvpal} method instantly removes the blur in the image and, after a few iterations, 
reduces the ringing artifacts near the boundaries caused by the choice of boundary conditions. 
Comparable results are achieved by the standard {\tt vpal} method, 
but it requires more iterations and longer computational time.

Lastly, since the step size selection plays a crucial role in the performance and efficiency of the methods, 
\Cref{fig:Peppers step size} reports the values chosen at each iteration under both the linearized and optimal strategies. 
For the {\tt vpal} method, both strategies produce highly oscillatory step size sequences. 
In contrast, the preconditioned version exhibits a  stable and consistent behavior, as for the {\tt pvpal} method both strategies always select $\alpha = 1$ at each iteration (data not show).

\begin{figure}[htbp]
    \centering
    \begin{subfigure}[b]{0.32\textwidth}
        \includegraphics[width=\textwidth]{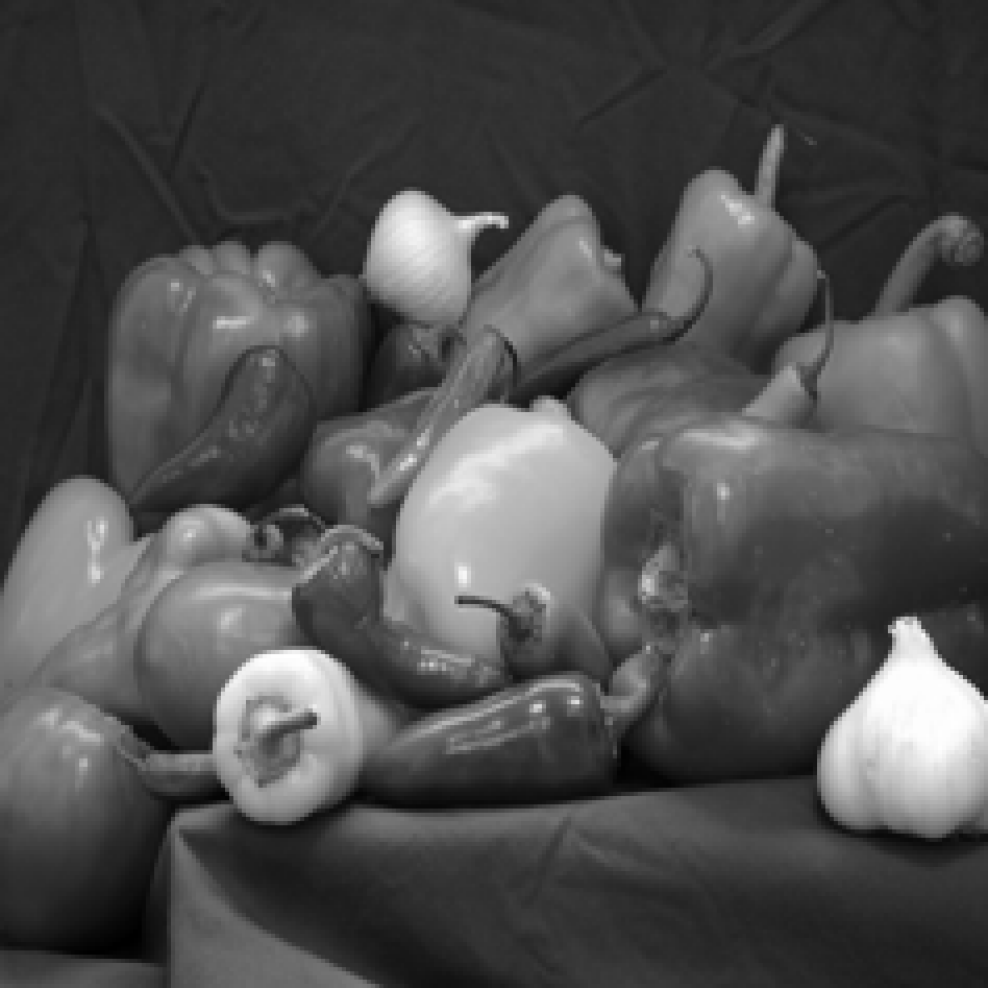}
        \caption{True image}
    \end{subfigure}
    \hfill
    \begin{subfigure}[b]{0.32\textwidth}
        \includegraphics[width=\textwidth]{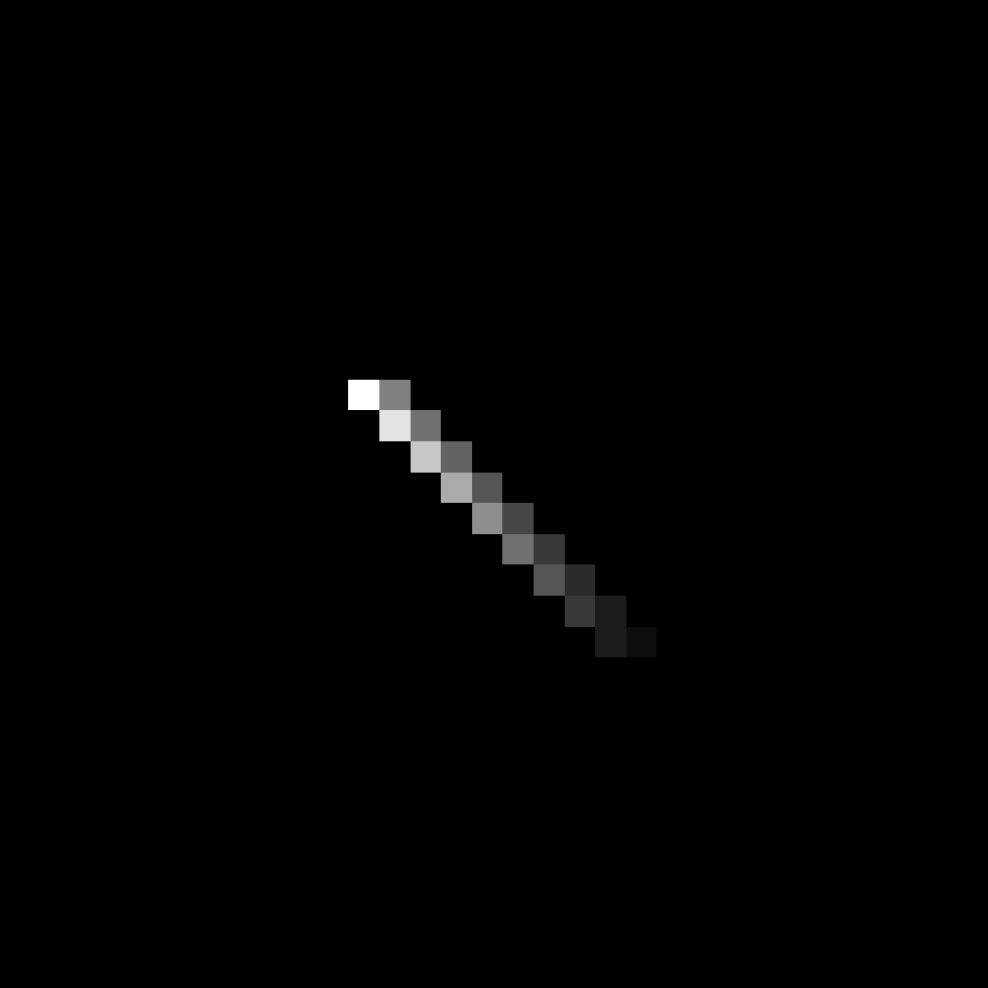}
        \caption{Motion PSF}
    \end{subfigure}
    \hfill
    \begin{subfigure}[b]{0.32\textwidth}
        \includegraphics[width=\textwidth]{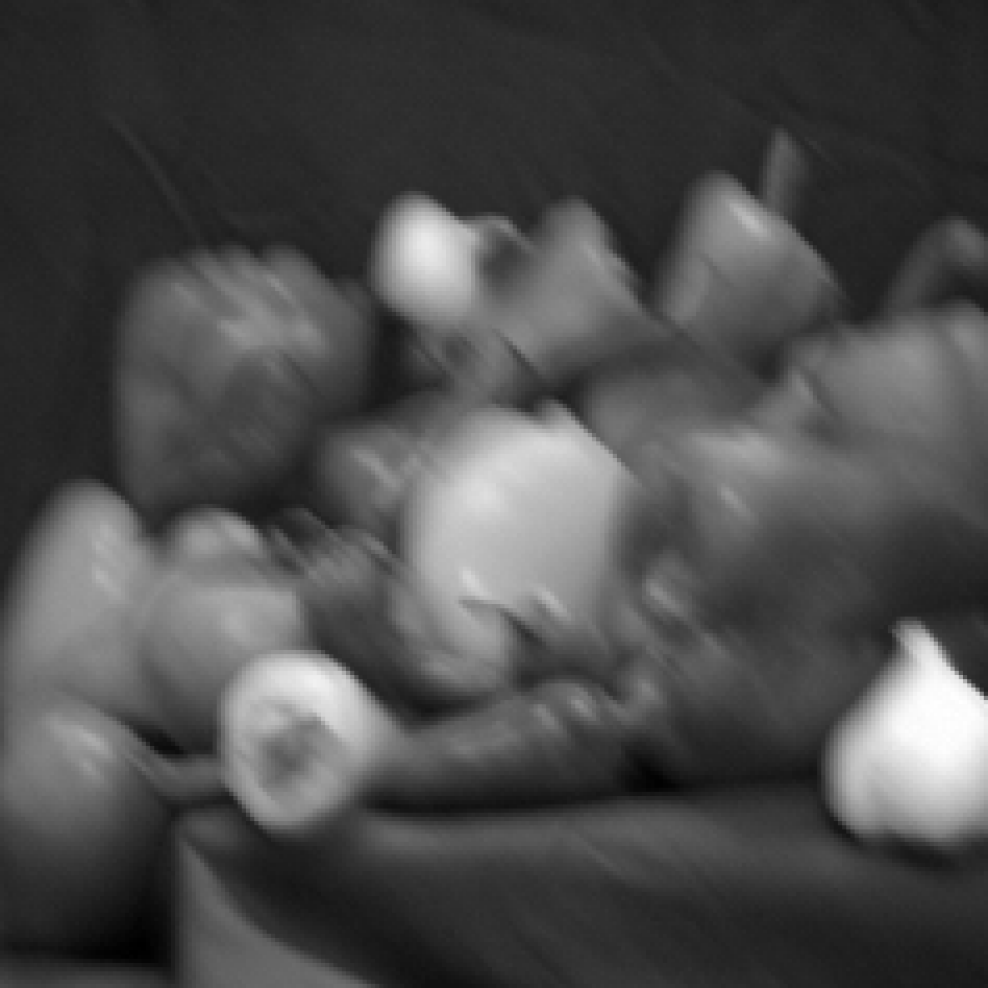}
        \caption{Observed image}
    \end{subfigure}
    \caption{Experiment 1: (a) Original image, (b) PSF used in the deblurring process, and (c) resulting blurred and noisy observation.}
    \label{fig:Peppers setting}
\end{figure}

\begin{figure}[htbp]
    \centering
    \begin{subfigure}[b]{0.42\textwidth}
        \includegraphics[width=\textwidth]{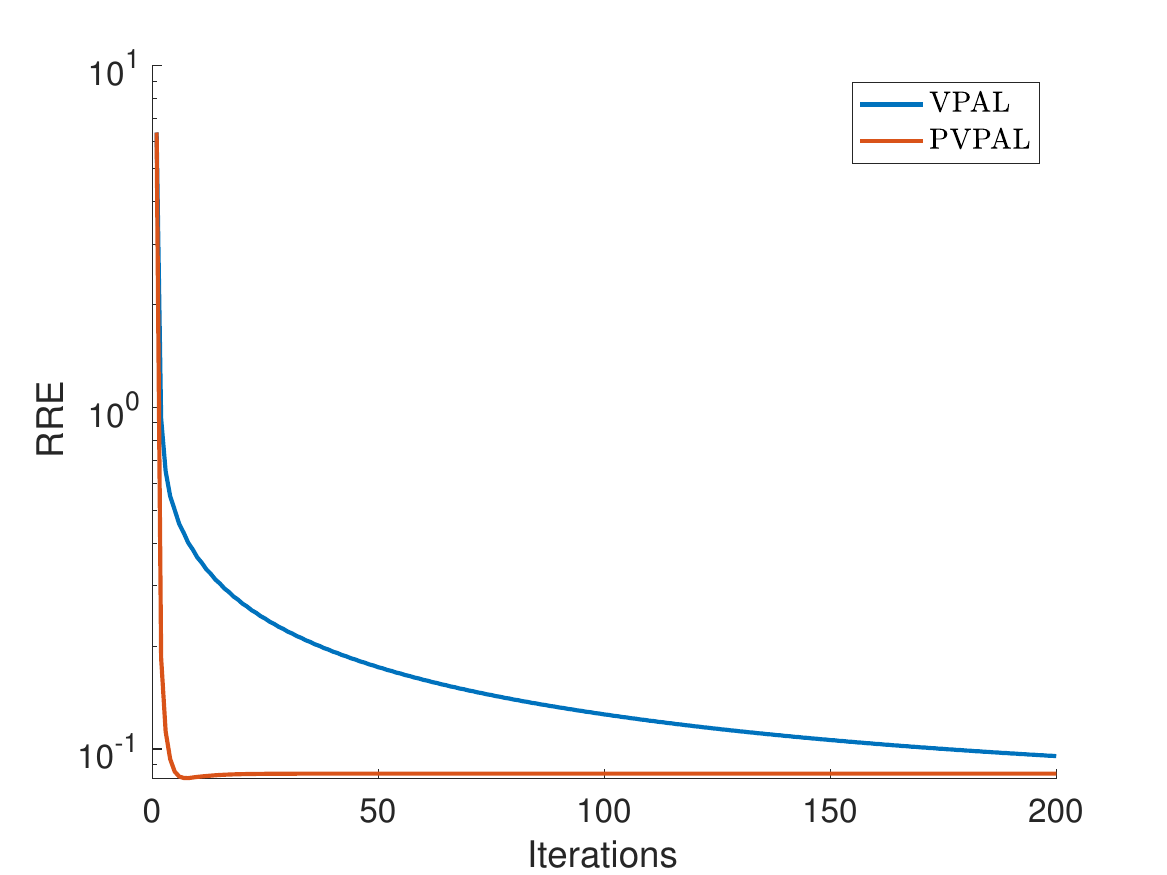}
        \caption{opt: RRE vs Iterations}

    \end{subfigure}
    \begin{subfigure}[b]{0.42\textwidth}
        \includegraphics[width=\textwidth]{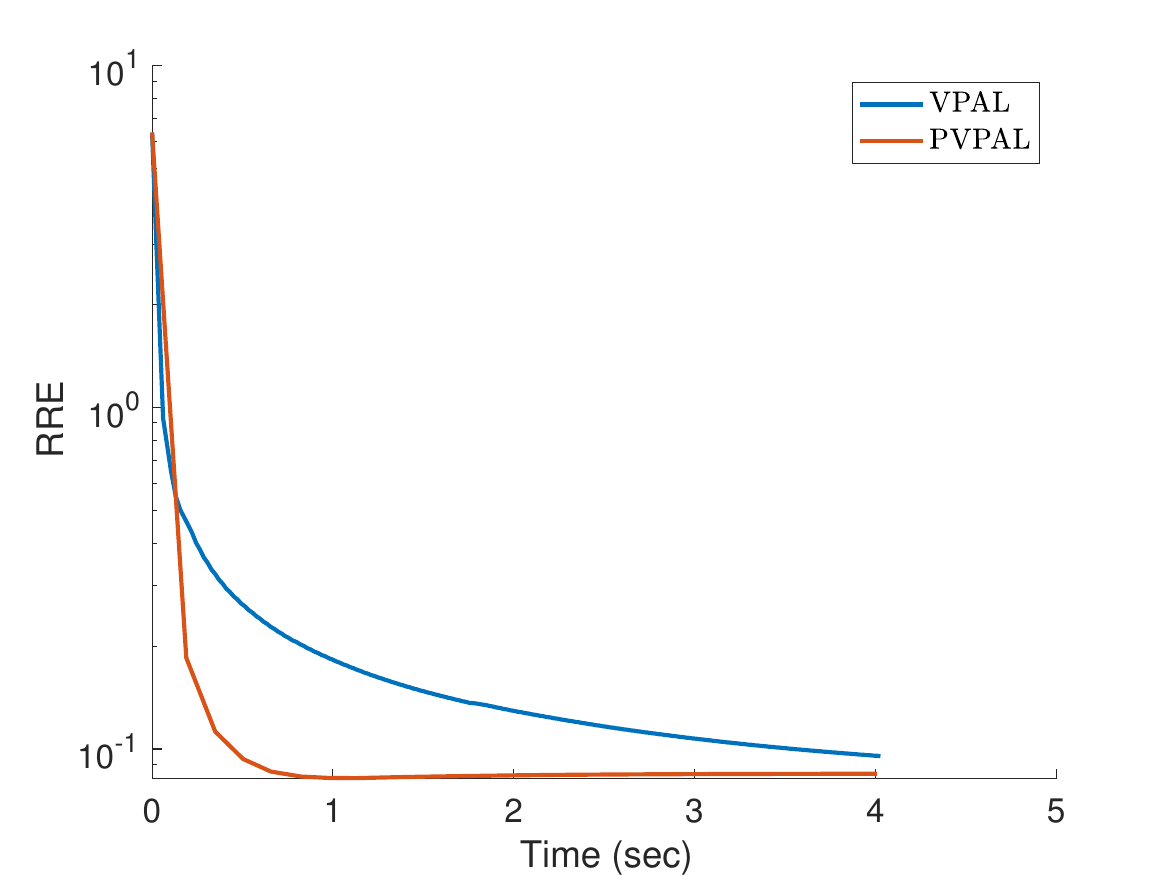}
        \caption{opt: RRE vs Time}

    \end{subfigure}
    \\
    \begin{subfigure}[b]{0.42\textwidth}
        \includegraphics[width=\textwidth]{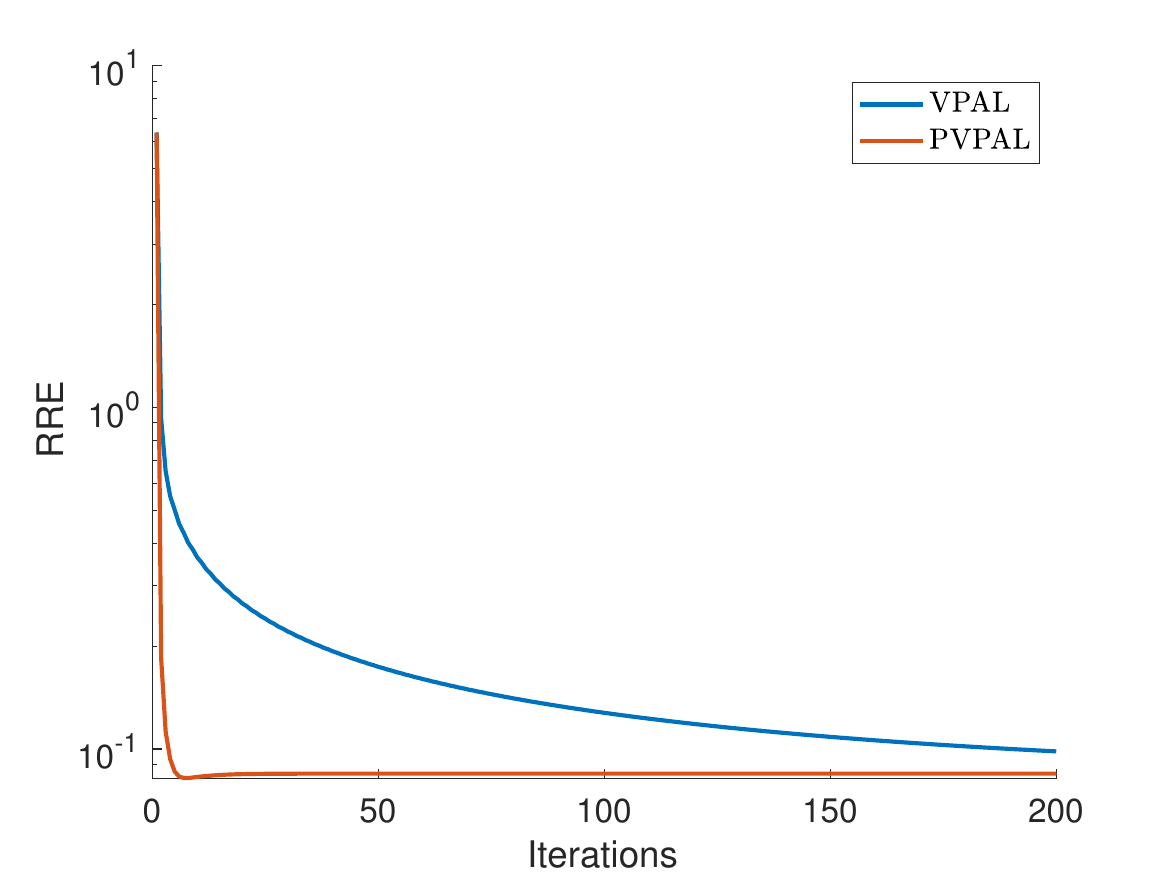}
        \caption{lin: RRE vs Iterations}
    \end{subfigure}
    \begin{subfigure}[b]{0.42\textwidth}
        \includegraphics[width=\textwidth]{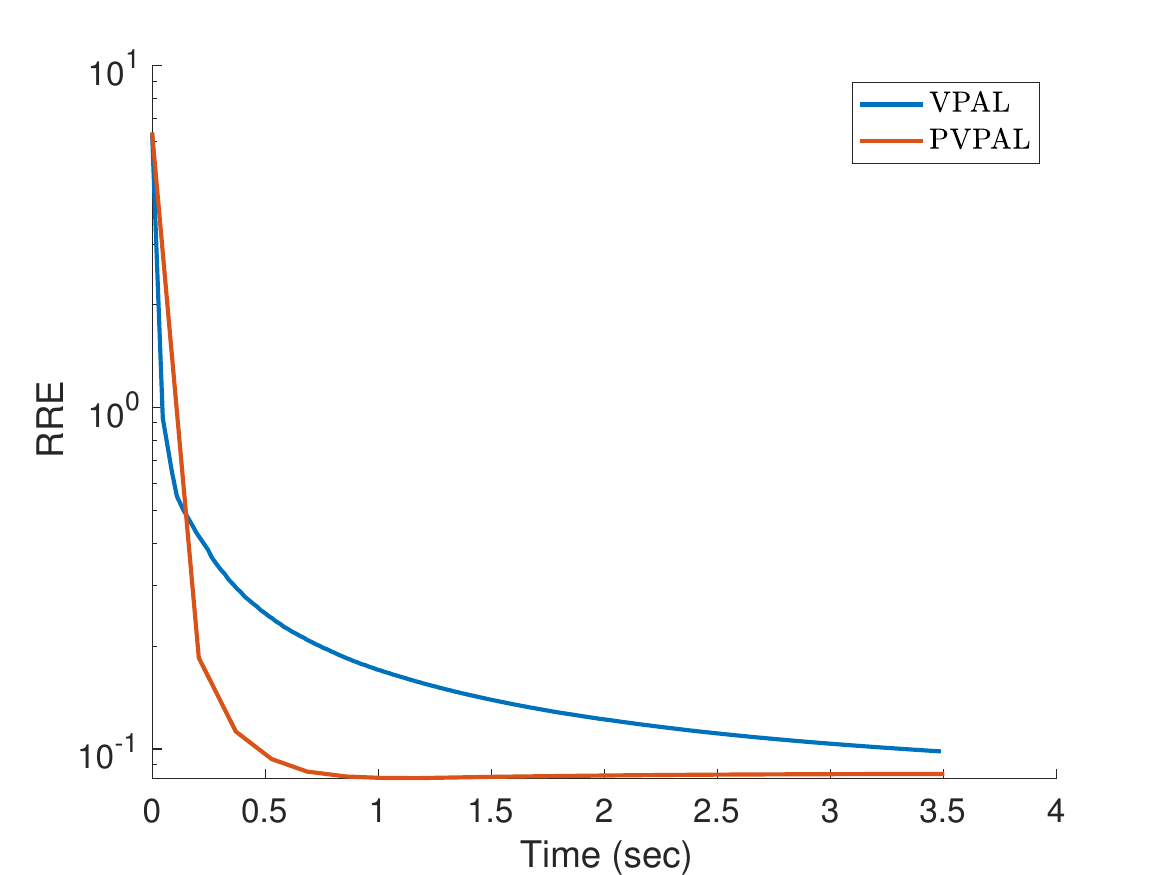}
        \caption{lin: RRE vs Time}

    \end{subfigure}

    \caption{Experiment 1: Comparison between {\tt vpal} and {\tt pvpal} methods. The first row shows the behavior of the RRE with respect to the number of iterations and the corresponding computation time (in seconds) using the optimal step size strategy. In contrast, the second row focuses on the linearized strategy.}
    \label{fig:Peppers RRE and time}
\end{figure}

\begin{figure}[htbp]
    \centering
    \begin{subfigure}[b]{0.3\textwidth}
        \includegraphics[width=\textwidth]{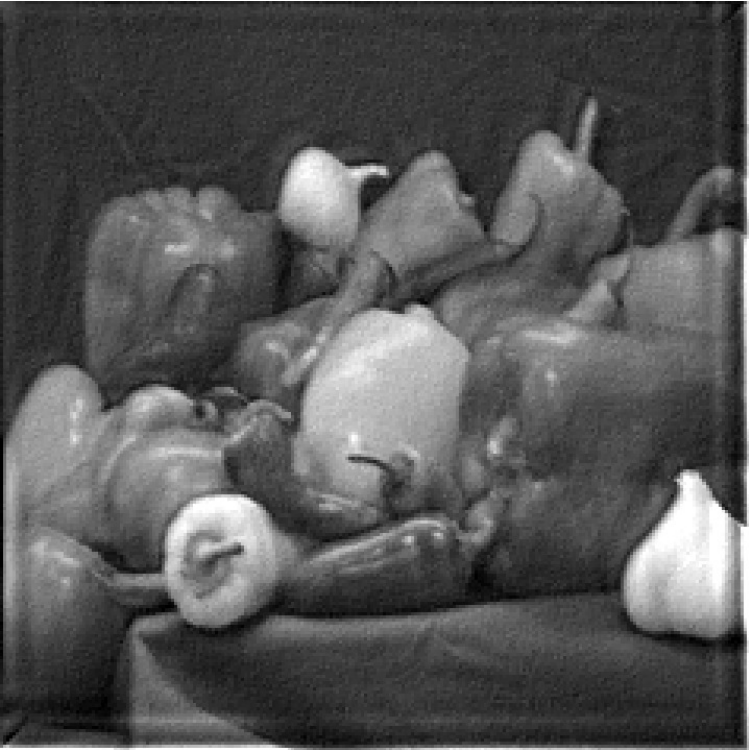}
        \caption{{\tt vpal}: Iter: 13, Time: 0.2250 seconds, RRE: 0.3132, PSNR: 18.62dB.}
    \end{subfigure}
    \hfill
    \begin{subfigure}[b]{0.3\textwidth}
        \includegraphics[width=\textwidth]{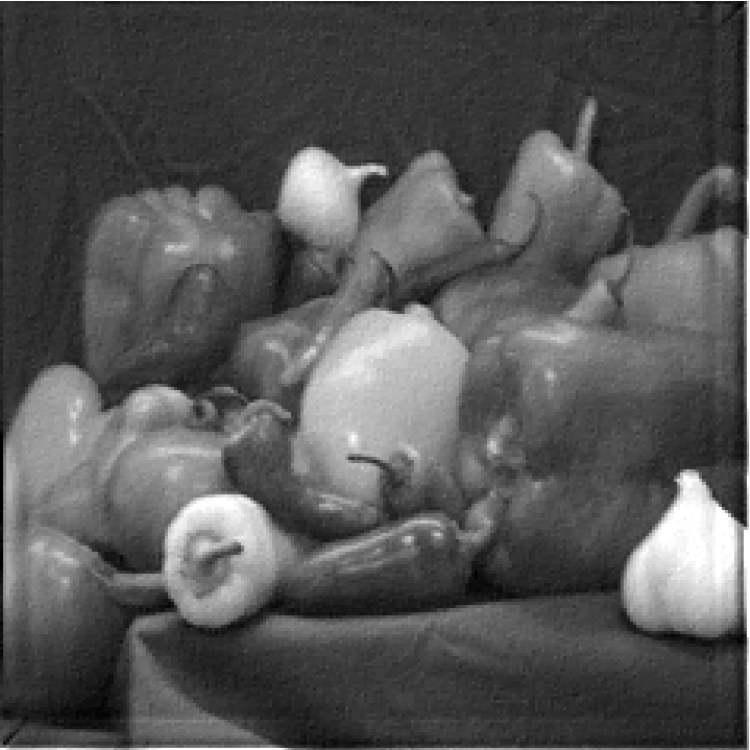}
        \caption{{\tt vpal}: Iter: 29, Time: 0.5301 seconds, RRE: 0.2207, PSNR: 21.43dB.}
    \end{subfigure}
    \hfill
    \begin{subfigure}[b]{0.3\textwidth}
        \includegraphics[width=\textwidth]{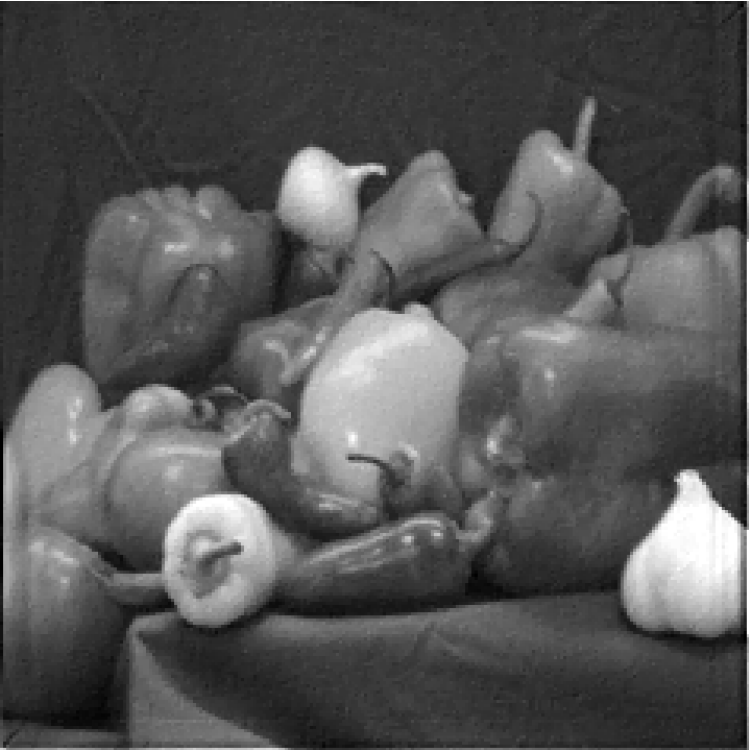}
        \caption{{\tt vpal}: Iter: 58, Time: 1.007 seconds, RRE: 0.1614, PSNR: 24.15dB.}
    \end{subfigure}

    \vspace{0.5cm} %

    \begin{subfigure}[b]{0.3\textwidth}
        \includegraphics[width=\textwidth]{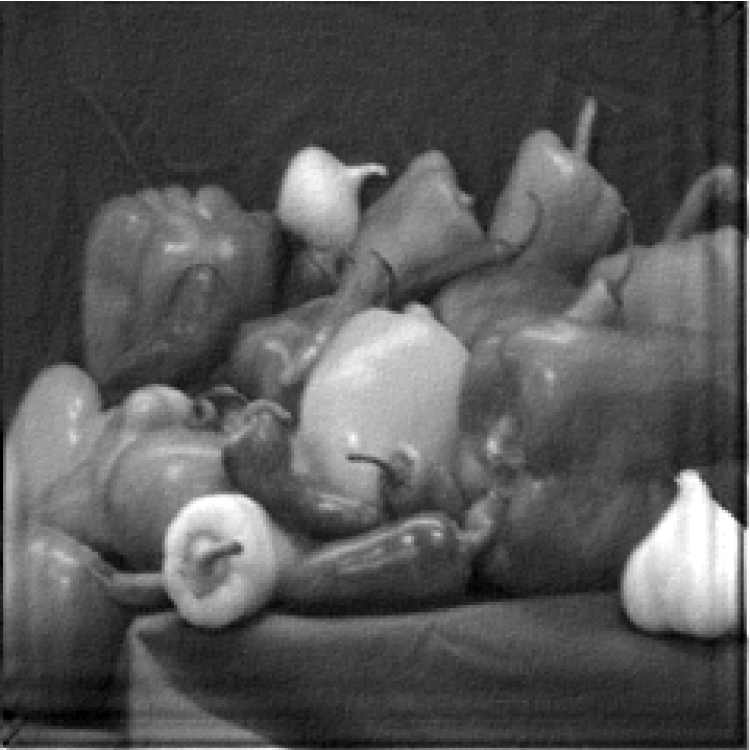}
        \caption{{\tt pvpal}: Iter: 1, Time: 0.2172 seconds, RRE: 0.1848, PSNR: 22.97dB.}
    \end{subfigure}
    \hfill
    \begin{subfigure}[b]{0.3\textwidth}
        \includegraphics[width=\textwidth]{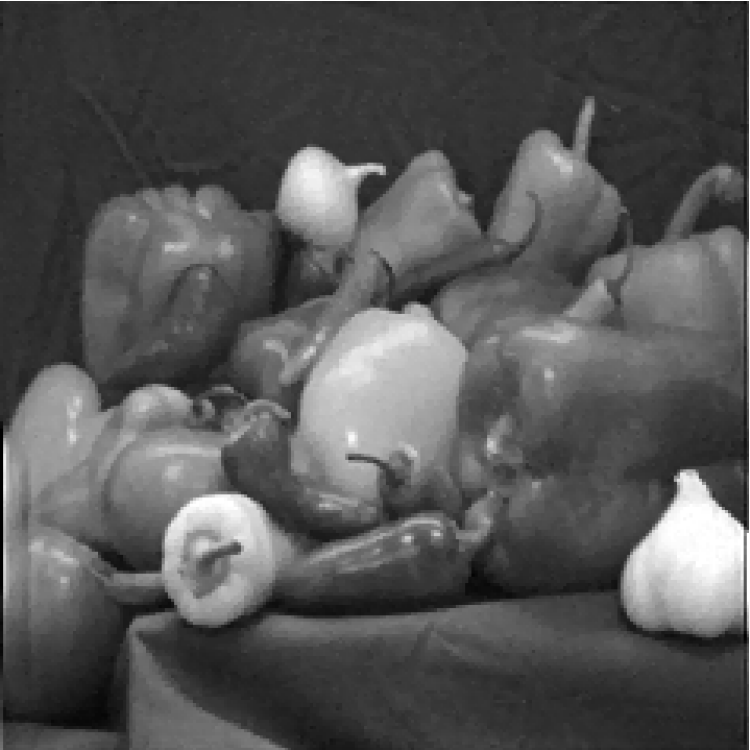}
        \caption{{\tt pvpal}: Iter: 3, Time: 0.5287 seconds, RRE: 0.0934, PSNR: 28.90dB.}
    \end{subfigure}
    \hfill
    \begin{subfigure}[b]{0.3\textwidth}
        \includegraphics[width=\textwidth]{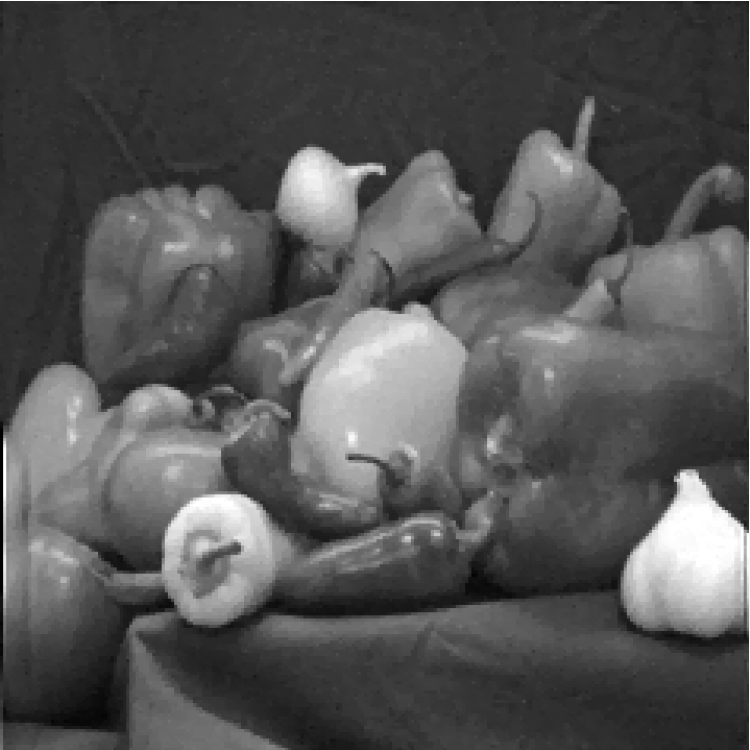}
        \caption{{\tt pvpal}: Iter: 6, Time: 0.9994 seconds, RRE: 0.0823, PSNR: 30.00dB.}
    \end{subfigure}

    \caption{Comparison of reconstructions produced by the {\tt vpal} and {\tt pvpal} methods under equal computation time conditions.}
    \label{fig:Peppers reconstructions}
\end{figure}

\begin{table}[tbhp]
\footnotesize
 \caption{Example 1: Comparison between the standard {\tt vpal} method and its preconditioned version, {\tt pvpal}. The top half of the table reports results obtained using the optimal strategy to compute the step size at each iteration, while the bottom half refers to the linearized approach. Values highlighted in \textcolor{matlab1}{blue} indicate the final RRE achieved by {\tt vpal} after 200 iterations. We also report the number of {\tt pvpal} iterations needed to reach the same RRE and the corresponding computation time. Values in \textcolor{matlab2}{red} show the time required by {\tt vpal} to complete 200 iterations, and the performance of {\tt pvpal} within approximately the same amount of time.}
    \label{tab:Peppers comparison}
    \centering \small
    \begin{tabular}{c|c|c|c|c|c|}
       \cline{2-6}
       & & {\tt vpal} (200 iter) & {\tt pvpal} (200 iter) & {\tt pvpal} (26 iter) & {\tt pvpal} (3 iter) \\ 
       \hline 
       \multirow{2}{*}{\rotatebox[origin=c]{90}{\textbf{opt}}} 
       & \textbf{RRE} & \textcolor{matlab1}{0.0951} & 0.0847 & 0.0846 & \textcolor{matlab1}{0.0934} \\ 
       \cline{2-6} & \textbf{Time (sec.)}
       & \textcolor{matlab2}{4.0466} & 29.768 & \textcolor{matlab2}{4.0097} & 0.5035 \\
       \hline
       & & {\tt vpal} (200 iter) & {\tt pvpal} (200 iter) & {\tt pvpal} (22 iter) & {\tt pvpal} (3 iter) \\ \hline
       \multirow{2}{*}{\rotatebox[origin=c]{90}{\textbf{lin}}} 
       & \textbf{RRE} & \textcolor{matlab1}{0.0982} & 0.0847 & 0.0845 & \textcolor{matlab1}{0.0934} \\ 
       \cline{2-6} & \textbf{Time (sec.)}
       & \textcolor{matlab2}{3.5068} & 30.314 & \textcolor{matlab2}{3.5049} & 0.5287 \\
       \hline
    \end{tabular}
\end{table}

\begin{figure}[htbp]
    \centering
    \begin{subfigure}[b]{1\textwidth}
    \includegraphics[width=\textwidth, trim=0 290 0 0, clip]{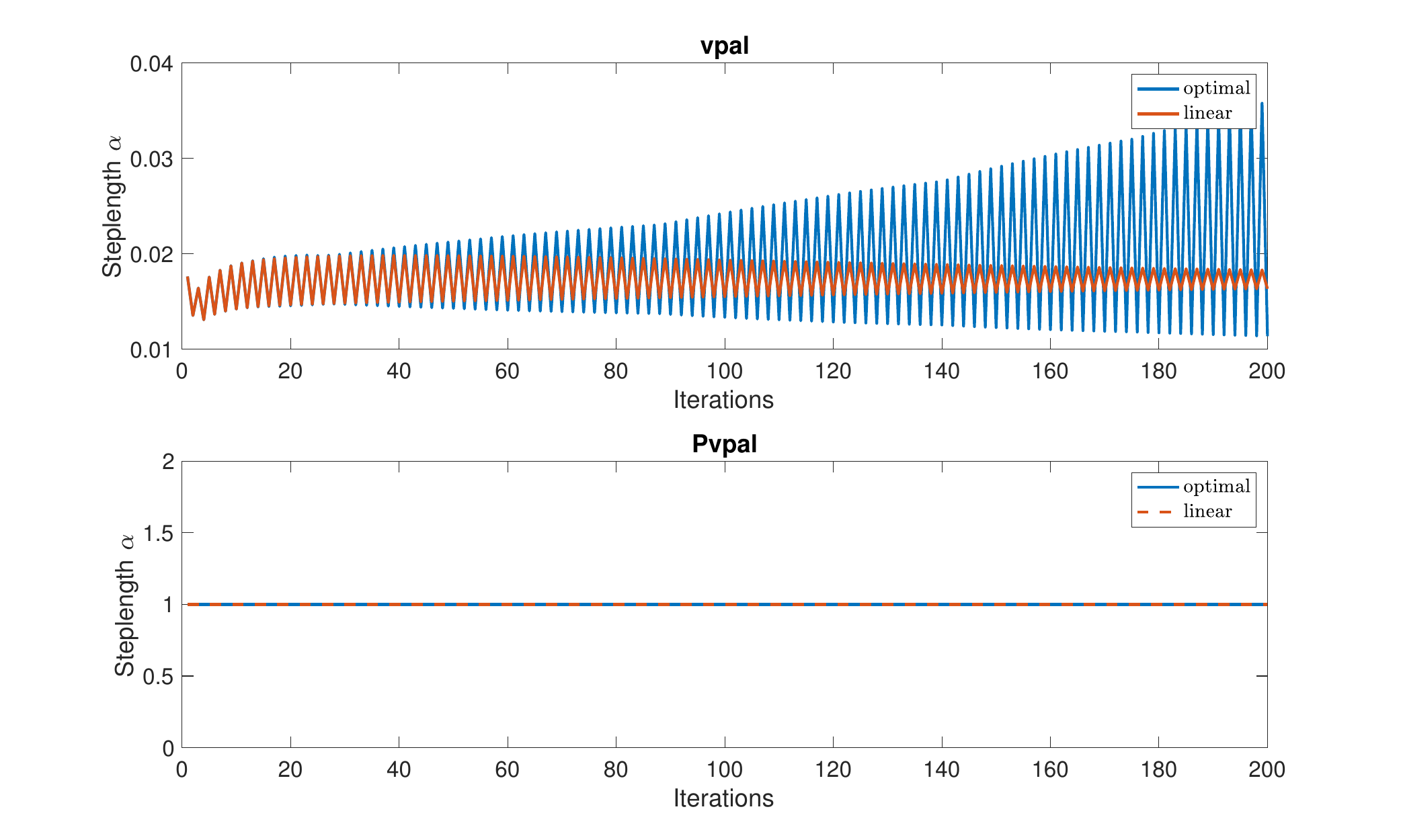}    
    \end{subfigure}
    \caption{Experiment 1: Analysis of the step size behavior for both the linear and optimal strategies in the {\tt vpal}. Step size for {\tt pvpal} is constant $\alpha = 1$.}
    \label{fig:Peppers step size}
\end{figure}

\subsection{Experiment 2: Inpainting}\label{ssec:inpainting}
In this second example, we consider the RGB image of the ``\textit{Meisje met de parel}" of dimension $240\times205$ and address an inpainting problem using a mask that removes 85\% of the pixels from the image. Since the image has three different channels for pixel intensities, we apply the {\tt vpal} and {\tt pvpal} methods to each channel separately and report the numerical results obtained. \Cref{fig:inpainting setting} diplays the true image and the observed one.

To compare the performance of the {\tt vpal} method and its preconditioned variant {\tt pvpal}, 
we set the regularization parameter to $\mu = 10^{-2}$, 
and the augmented Lagrangian penalty parameter to $\lambda = 0.1$ for {\tt vpal} and $\lambda = 0.5$ for {\tt pvpal}. 
The different choice of $\lambda$ for the two methods is motivated by the fact that the preconditioner itself depends on this parameter, 
and we observed that a slightly higher value than the one used for {\tt vpal} yields better results. 
Each algorithm is run for $400$ iterations.

\Cref{fig:inpainting RRE and time} reports the average RRE over the three color channel obtained by each method  at every iteration, as well as its average progression over time (in seconds) both for the optimal and linearized step size selection strategy. Also in this case, the reduction in both computation time and number of iterations is remarkable. To provide a more comprehensive analysis of the performance of the two algorithms, \Cref{tab:impaintin comparison} summarizes the average reconstruction quality over the three color channels and the average computation time required to achieve it, taking into account both step size strategies. The first two columns present the results obtained by the two methods after the full 400 iterations. The third column reports the average number of iterations required by {\tt pvpal} to match the runtime of 400 iterations of {\tt vpal}, while the fourth column shows the average number of {\tt pvpal} iterations needed to reach an RRE comparable to that of 400 {\tt vpal} iterations. Notably, only 3 iterations of the {\tt pvpal} method are sufficient to achieve the same average RRE over the three color channels, reducing the computation time from 4 seconds (for 400 iterations of {\tt vpal}) to just 0.1 seconds with its preconditioned counterpart. 

Lastly, \Cref{fig:inpainting step size} illustrates the step-length values selected at each iteration under both the linearized and optimal strategies for each color channel. 
Unlike the deblurring examples, in the inpainting problem the step-length selection for the {\tt vpal} method exhibits significant oscillations during the first $50$ iterations, before gradually stabilizing around a value close to $2$. 
In contrast, for the {\tt pvpal} algorithm the linearized approach shows the same behavior as in the previous case, with a sequence of values constantly equal to one. 
The optimal strategy, however, displays a mildly oscillatory behavior, producing step size with a magnitude of approximately $1.3$ at each iteration.

\begin{figure}[htbp]
    \centering
    \begin{subfigure}[b]{0.40\textwidth}
        \includegraphics[width=\textwidth]{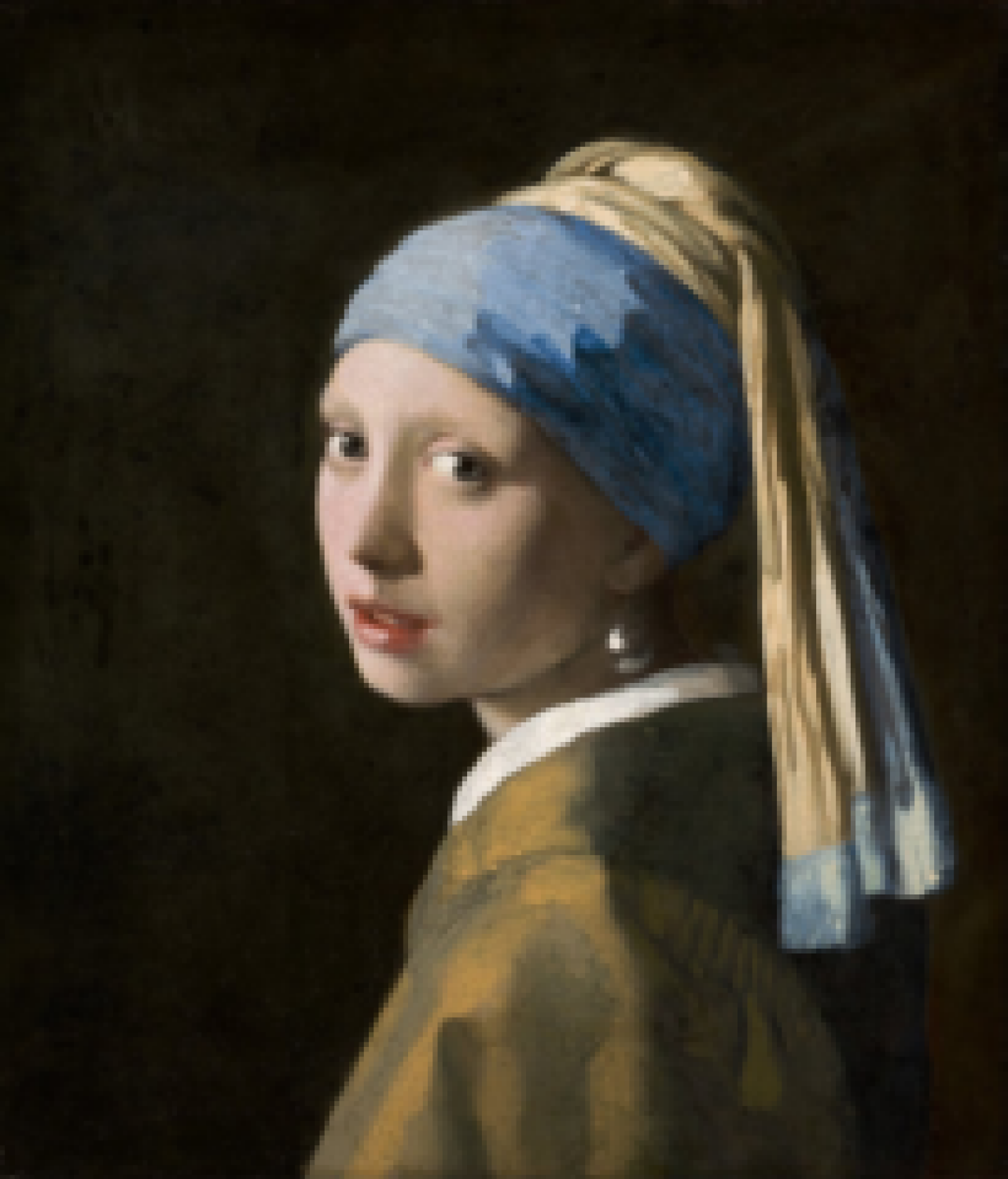}
        \caption{True image}
    \end{subfigure}
    \begin{subfigure}[b]{0.40\textwidth}
        \includegraphics[width=\textwidth]{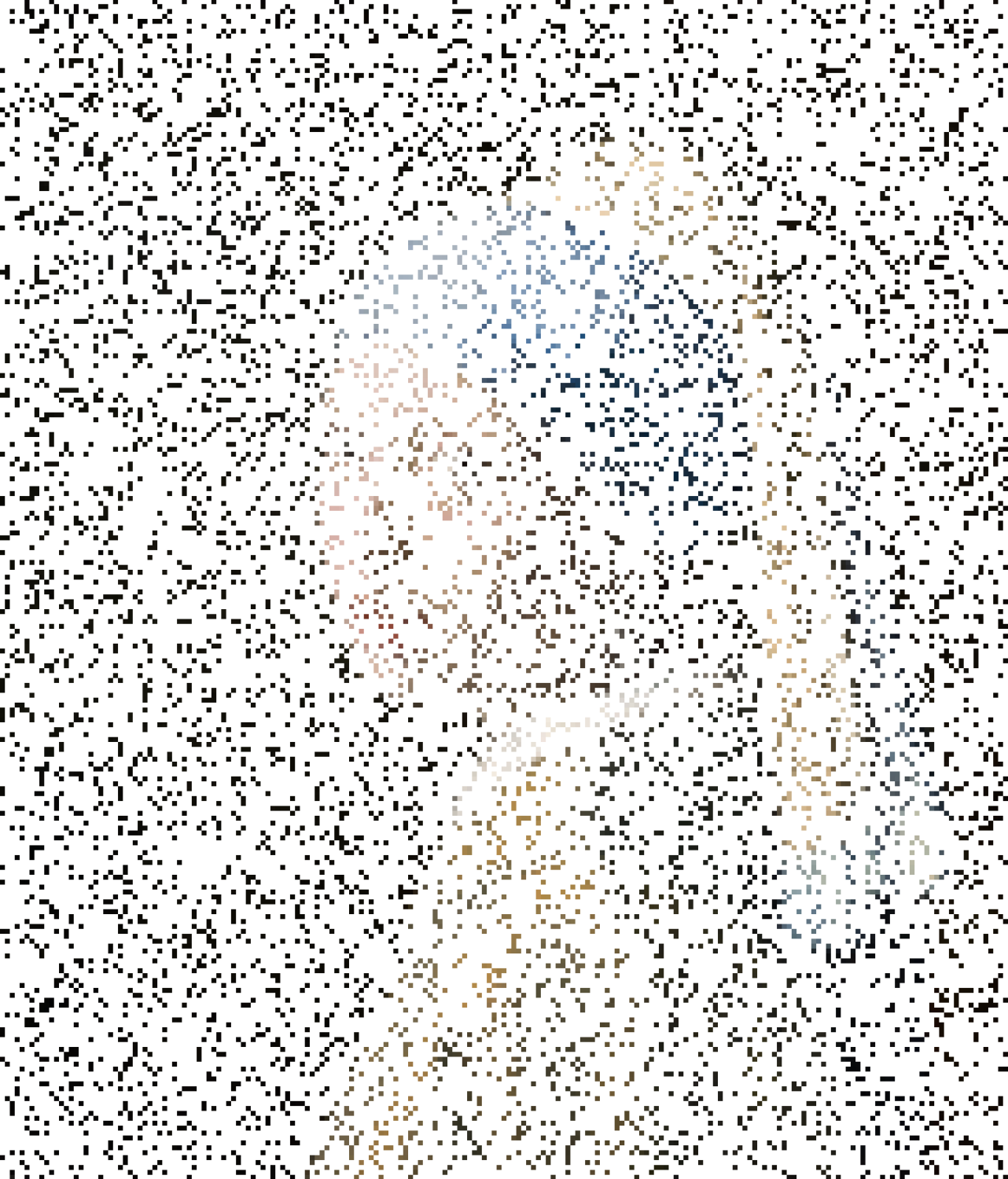}
        \caption{Observed image}
    \end{subfigure}
    \caption{Experiment 2: (a) Original image, (b) Observed image}
    \label{fig:inpainting setting}
\end{figure}

\begin{figure}[htbp]
    \centering
    \begin{subfigure}[b]{0.48\textwidth}
        \includegraphics[width=\textwidth]{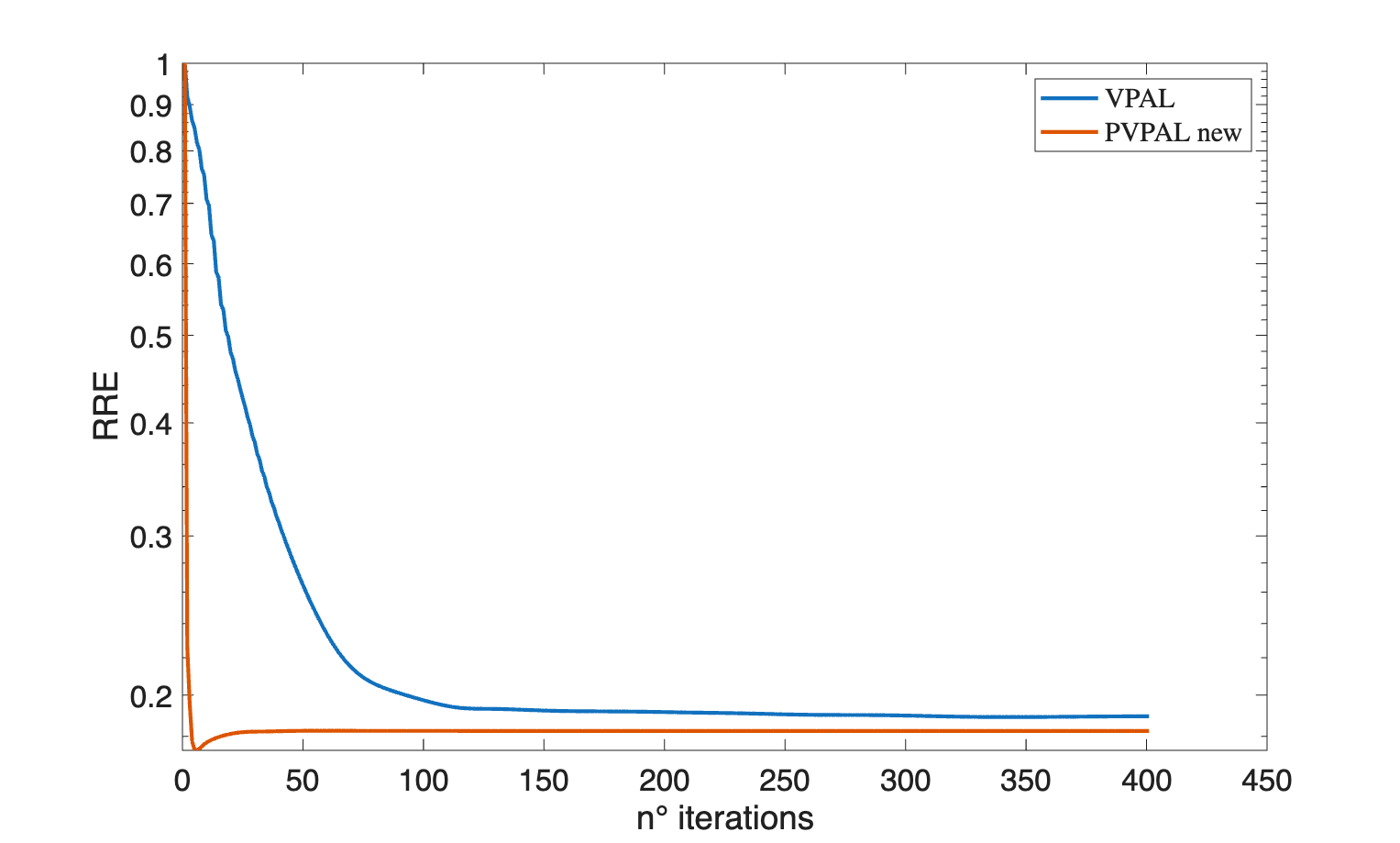}
        \caption{lin: RRE vs Iterations}
    \end{subfigure}
    \begin{subfigure}[b]{0.48\textwidth}
        \includegraphics[width=\textwidth]{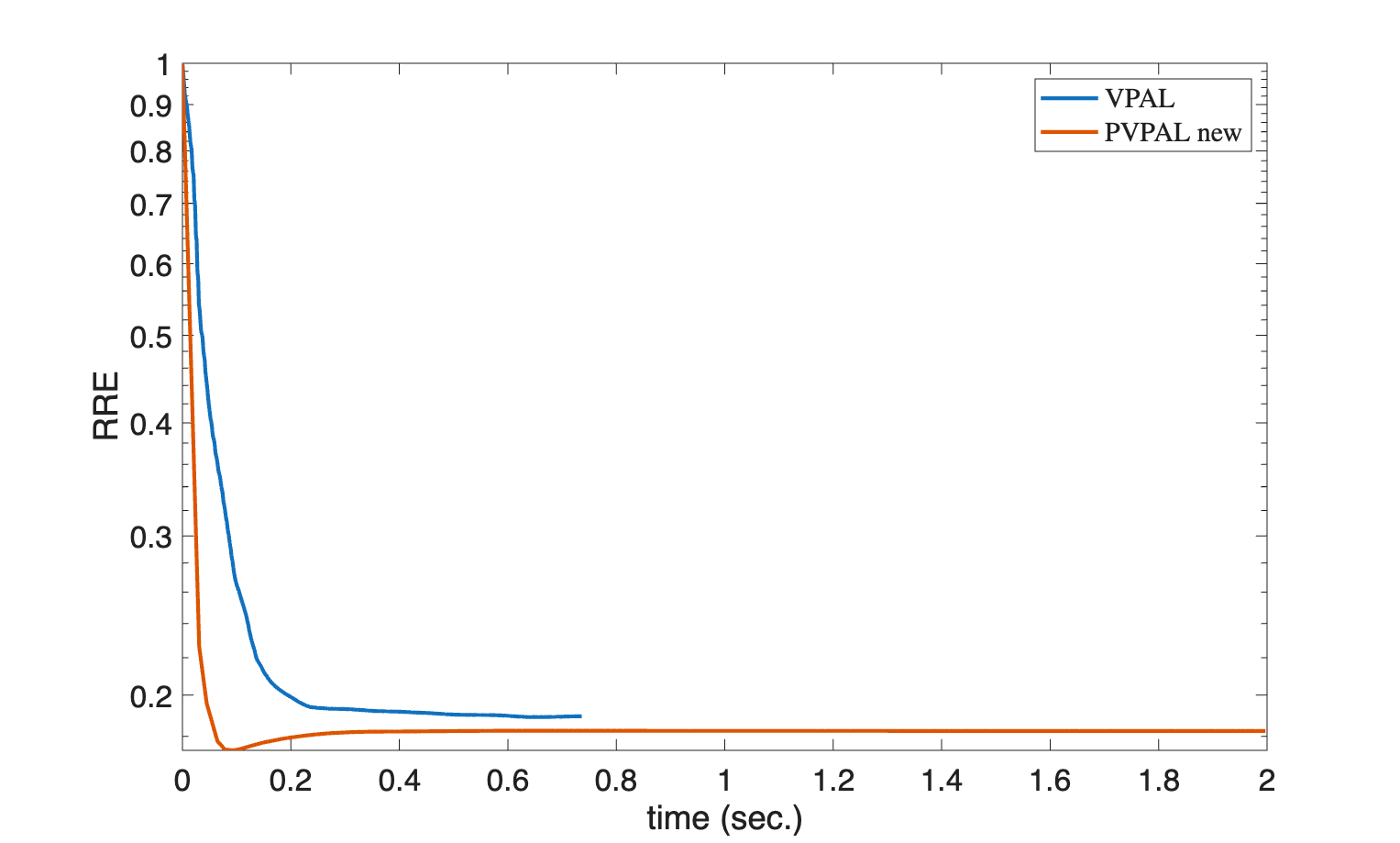}
        \caption{lin: RRE vs Time}

    \end{subfigure}
    \\
    \begin{subfigure}[b]{0.48\textwidth}
        \includegraphics[width=\textwidth]{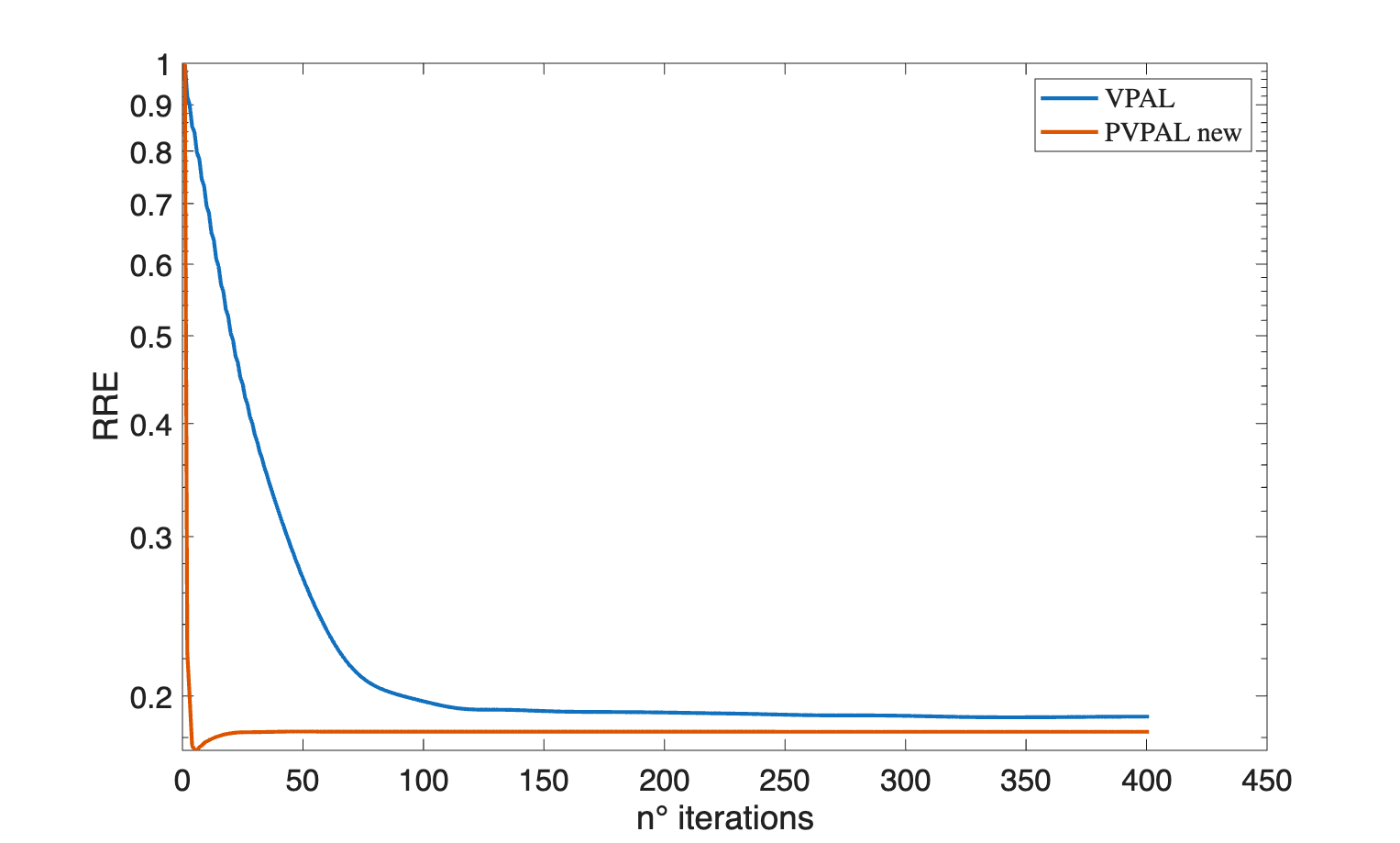}
        \caption{opt: RRE vs Iterations}
    \end{subfigure}
    \begin{subfigure}[b]{0.48\textwidth}
        \includegraphics[width=\textwidth]{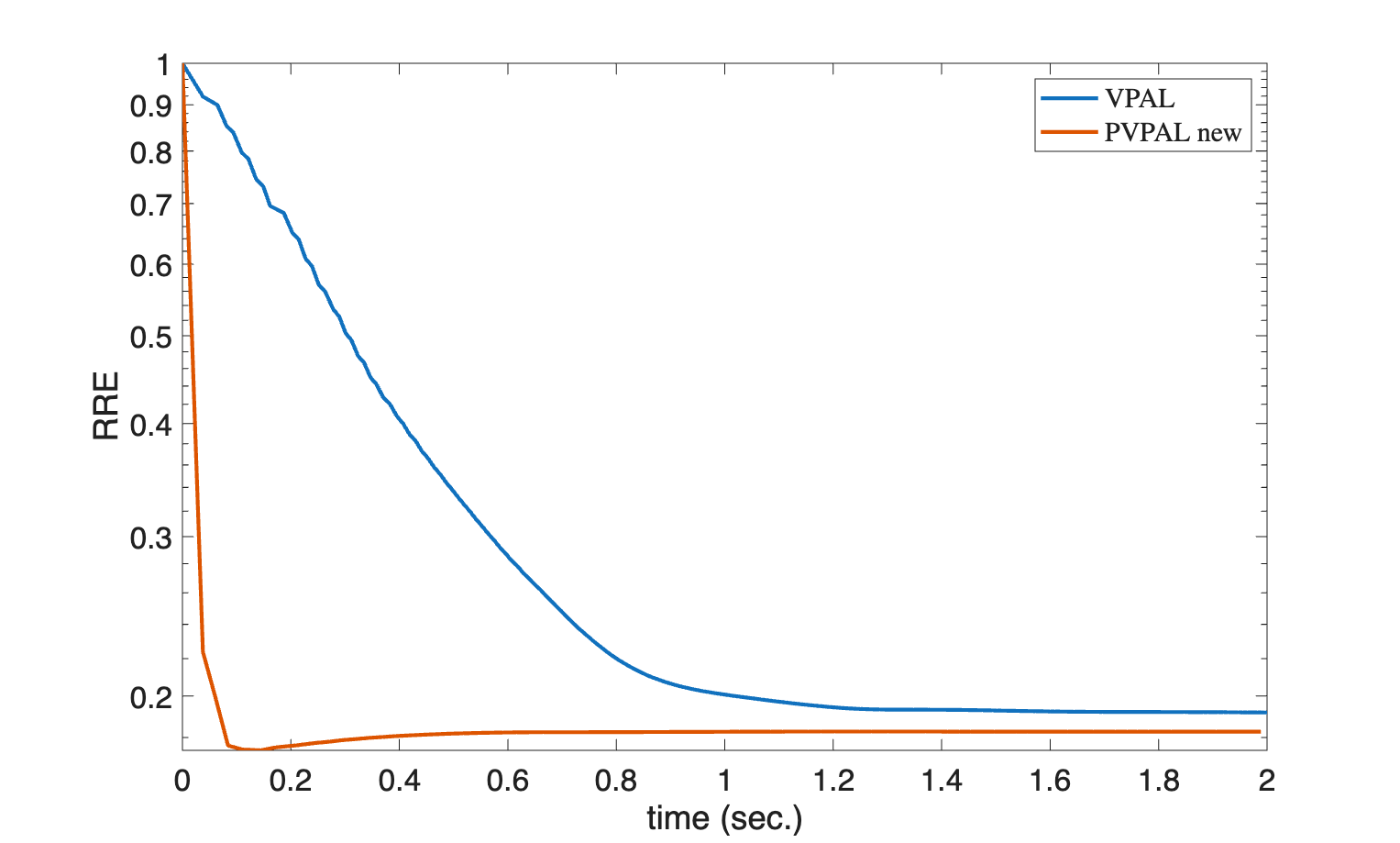}
        \caption{opt: RRE vs Time}

    \end{subfigure}
    \caption{Experiment 2: Comparison between the {\tt vpal} and {\tt pvpal} methods.
    The first row reports the behavior of the average RRE over the three color channels as a function of the number of iterations and the corresponding computational time (in seconds) when using the optimal step-length strategy.
    In contrast, the second row presents the same analysis for the linearized step-length strategy.}
    \label{fig:inpainting RRE and time}
\end{figure}

\begin{table}[tbhp]
\footnotesize
 \caption{Experiment 2: Comparison between the standard {\tt vpal} method and its preconditioned version, {\tt pvpal}. The top half of the table shows the average results over the three color channels using the optimal strategy to compute the step size at each iteration, while the bottom half refers to the linearized approach. Values highlighted in \textcolor{matlab1}{blue} represent the final average RRE achieved by {\tt vpal} after 400 iterations. For {\tt pvpal}, we report the average number of iterations required to reach the same RRE and the corresponding computation time. Values highlighted in \textcolor{matlab2}{red} indicate the average time taken by {\tt vpal} to perform 400 iterations, along with the results obtained by {\tt pvpal} within approximately the same time frame.
}
    \label{tab:impaintin comparison}
    \centering \small
    \begin{tabular}{c|c|c|c|c|c|}
       \cline{2-6}
       & & {\tt vpal} (400 iter) & {\tt pvpal} (400 iter) & {\tt pvpal} (149 iter) & {\tt pvpal} (3 iter) \\ 
       \hline 
       \multirow{2}{*}{\rotatebox[origin=c]{90}{\textbf{opt}}} 
       & \textbf{RRE} & \textcolor{matlab1}{0.1898} & 0.1826 & 0.1826 & \textcolor{matlab1}{0.1763} \\ 
       \cline{2-6} & \textbf{Time (sec.)}
       & \textcolor{matlab2}{3.9684} & 10,018 & \textcolor{matlab2}{3.6691} & 0.1179 \\
       \hline
       & & {\tt vpal} (400 iter) & {\tt pvpal} (400 iter) & {\tt pvpal} (34 iter) & {\tt pvpal} (3 iter) \\ \hline
       \multirow{2}{*}{\rotatebox[origin=c]{90}{\textbf{lin}}} 
       & \textbf{RRE} & \textcolor{matlab1}{0.1894} & 0.1825 & 0.1822 & \textcolor{matlab1}{0.1777} \\ 
       \cline{2-6} & \textbf{Time (sec.)}
       & \textcolor{matlab2}{0.5813} & 5.8440 & \textcolor{matlab2}{0.5386} & 0.0720 \\
       \hline
    \end{tabular}
\end{table}

\begin{figure}[htbp]
    \centering
    \begin{center}
        \includegraphics[width=0.8\textwidth]{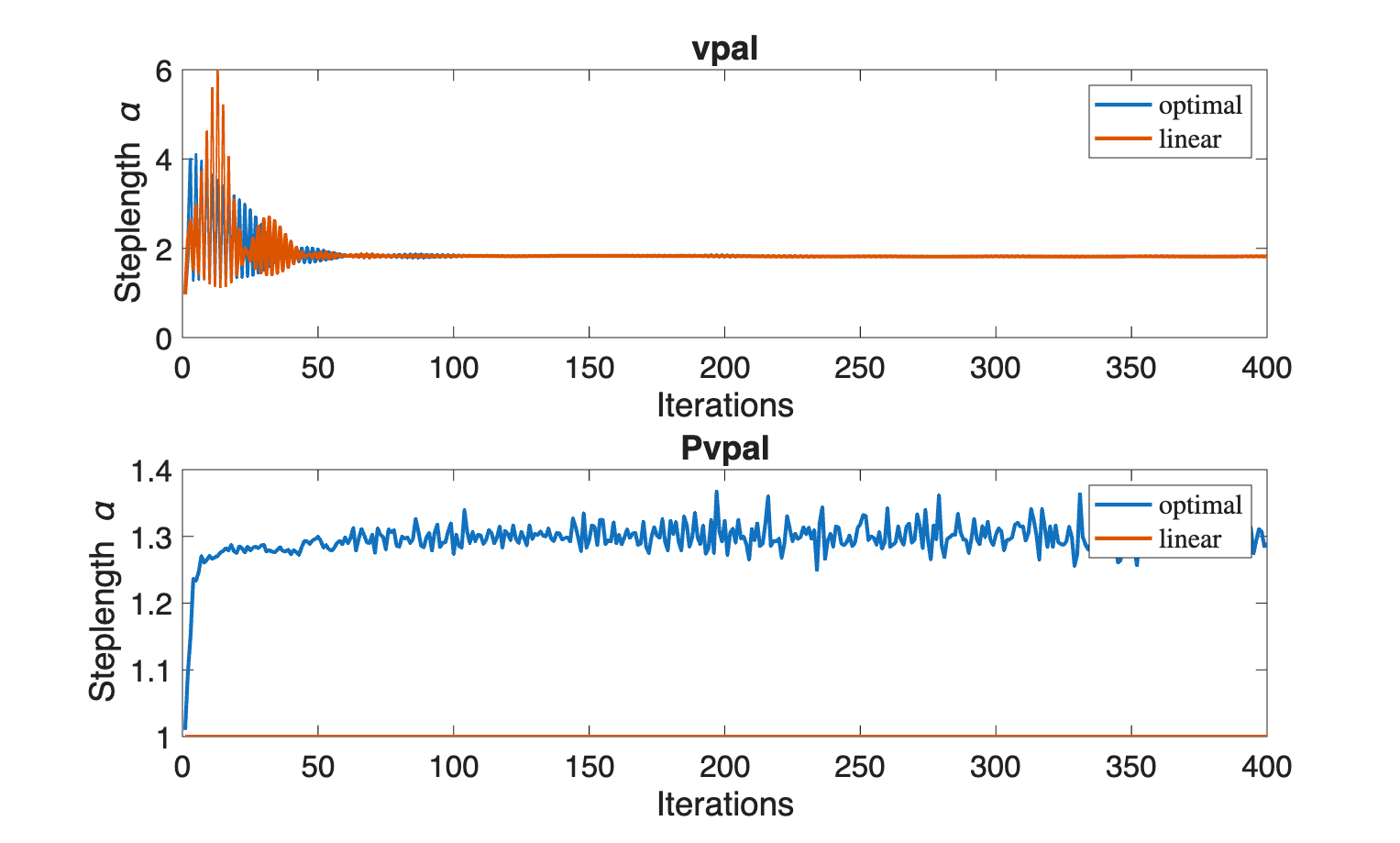}
    \end{center}
    \caption{Experiment 2: Analysis of the average step length behavior across the three color channels for both the linear and optimal strategies in the {\tt vpal} and {\tt pvpal} methods.}
    \label{fig:inpainting step size}
\end{figure}

\subsection{Experiment 3: Computed Tomography (CT)}\label{ssec:CT}
In this third and final example, we consider a CT problem with the Shepp-Logan phantom corrupted with 5\% white Gaussian noise with just $60$ projections angles between $[0,180]$ and with $p = 121$ number of rays for each source angle. \Cref{fig:CT setting} displays the true image and the corresponding sinogram.
To compare the performance of the {\tt vpal} method and its preconditioned variant {\tt pvpal}, we set the regularization parameter to $\mu = 10$ and the augmented Lagrangian penalty parameter to $\lambda = 5$ for both methods. Each algorithm is run for 400 iterations. 

\Cref{fig:CT RRE and time} reports the Relative Reconstruction Error (RRE) obtained by each method at every iteration, as well as its progression over time (in seconds), for both the linearized and optimal strategies. Again, the use of preconditioner reduce the number of iterations and the overall time to compute the same final reconstruction achieved by the standard approach.

The overall performance is summarized in \Cref{tab:CT comparison}, again considering both step size strategies. As in all previous examples, the first two columns present the results of the two methods after the full 400 iterations. The third column shows the number of iterations required by {\tt pvpal} to match the runtime of 400 iterations of {\tt vpal}, while the fourth column reports the number of {\tt pvpal} iterations needed to reach an RRE comparable to that of 400 {\tt vpal} iterations. We observe a reduction in computation time from 1.65 seconds to just 0.49 seconds in the optimal case, and from 0.98 seconds to 0.38 seconds in the linearized case.

Lastly, \Cref{fig:CT step size} illustrates the step size values selected at each iteration under both the linearized and optimal strategies. For the CT problem, we observe that in the standard case the step size sequence is again highly oscillatory, although with reduced height, and consists of rather small values, on the order of $10^{-4}$. In contrast, the preconditioned case exhibits behavior consistent with that observed in the other examples, showing stable and well-scaled step size values throughout the iterations.

\begin{figure}[htbp]
    \centering
    \begin{subfigure}[b]{0.4\textwidth}
        \centering
        \includegraphics[width=5cm, height=5cm]{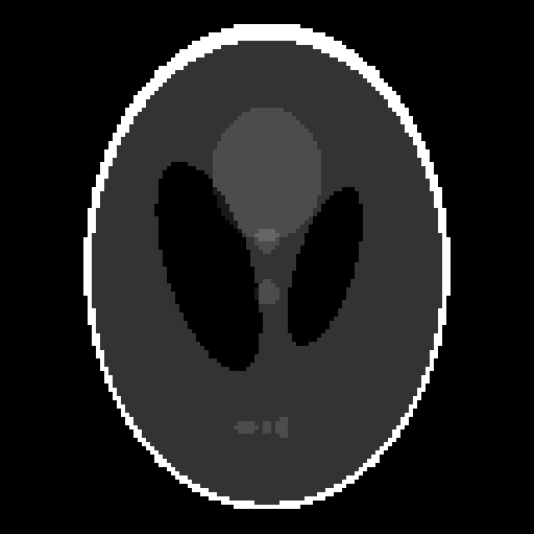}
        \caption{True image}
    \end{subfigure}
    \hfill
    \begin{subfigure}[b]{0.4\textwidth}
        \centering
        \includegraphics[width=5cm, height=5cm]{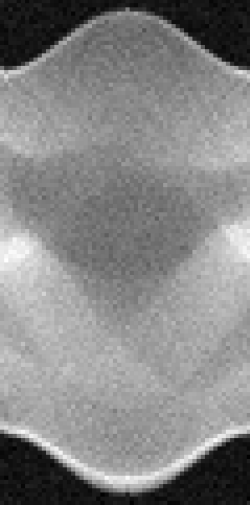}
        \caption{Observed sinogram}
    \end{subfigure}
    \caption{Experiment 3: (a) Original image and (b) resulting blurred and noisy sinogram.}
    \label{fig:CT setting}
\end{figure}

\begin{figure}[htbp]
    \centering
    \begin{subfigure}[b]{0.42\textwidth}
        \includegraphics[width=\textwidth]{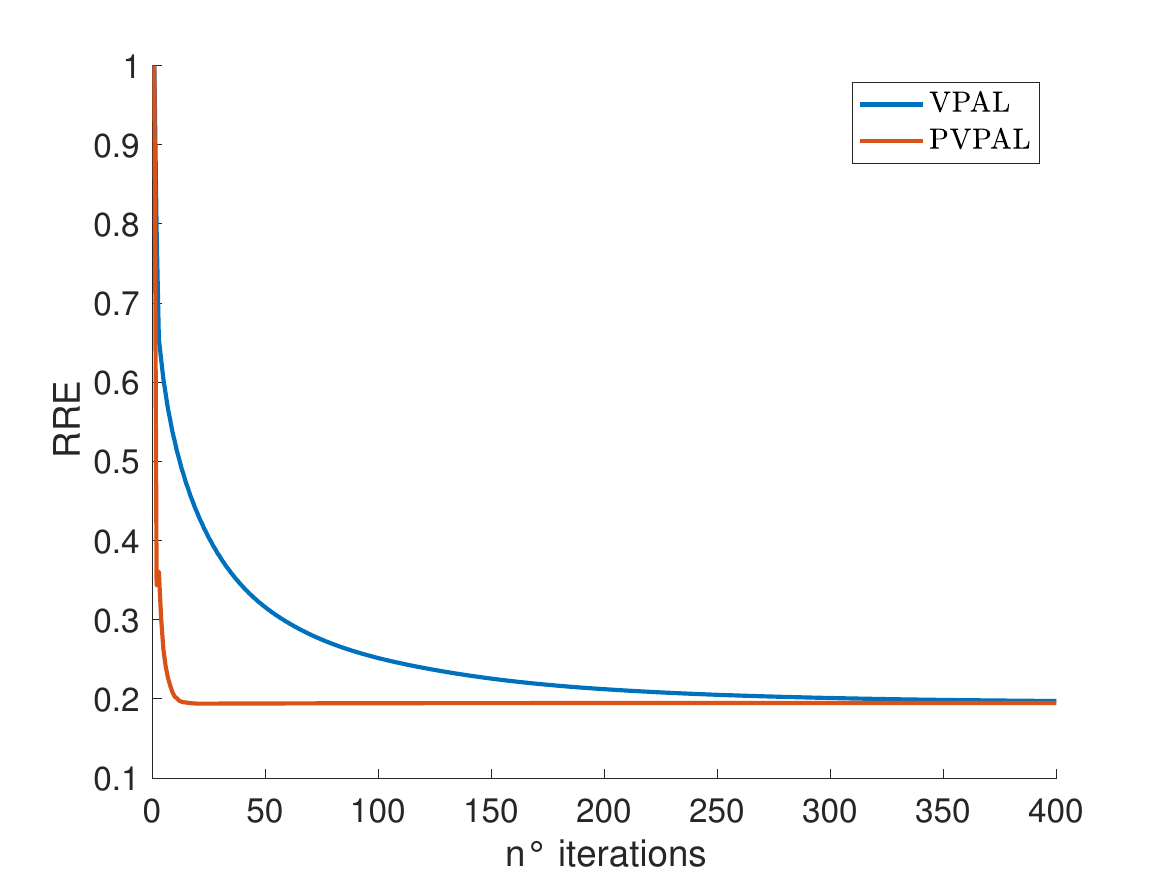}
        \caption{lin: RRE vs Iterations}
    \end{subfigure}
    \begin{subfigure}[b]{0.42\textwidth}
        \includegraphics[width=\textwidth]{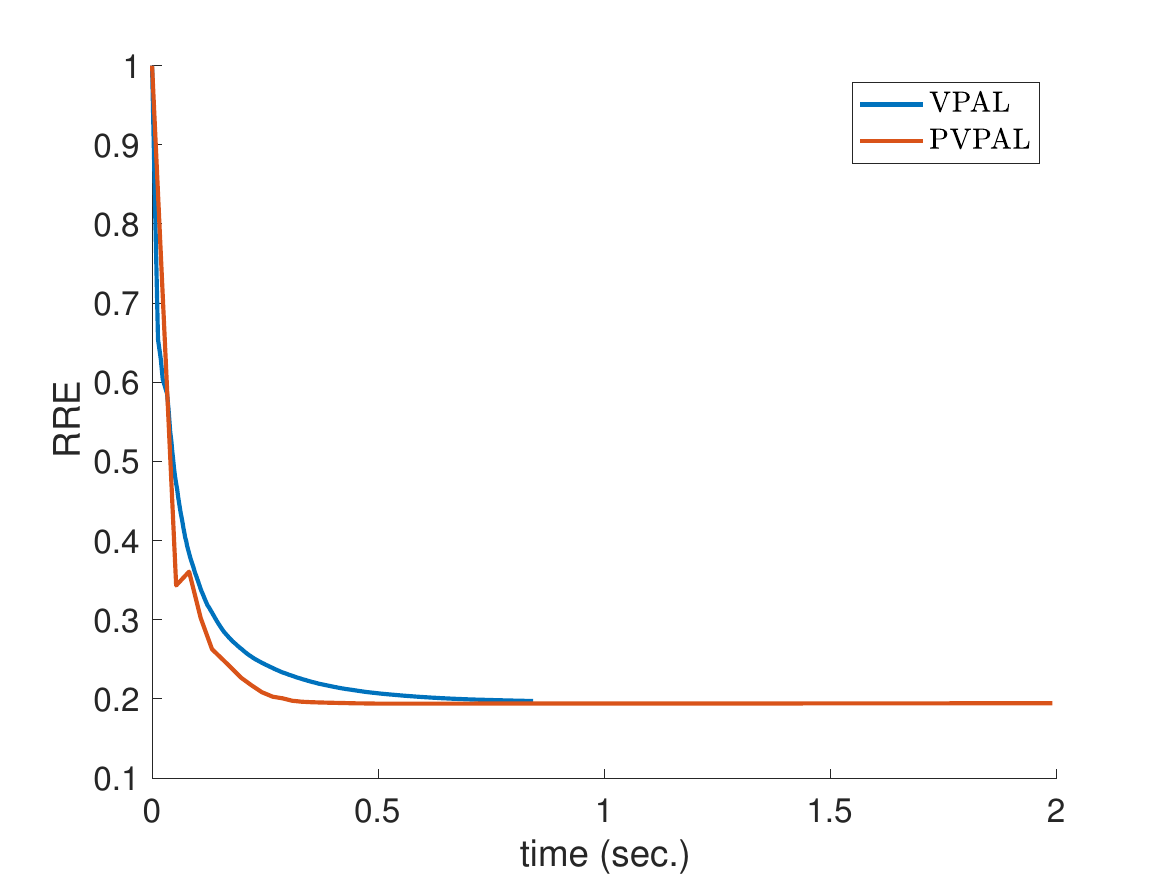}
        \caption{lin: RRE vs Time}

    \end{subfigure}
    \\
    \begin{subfigure}[b]{0.42\textwidth}
        \includegraphics[width=\textwidth]{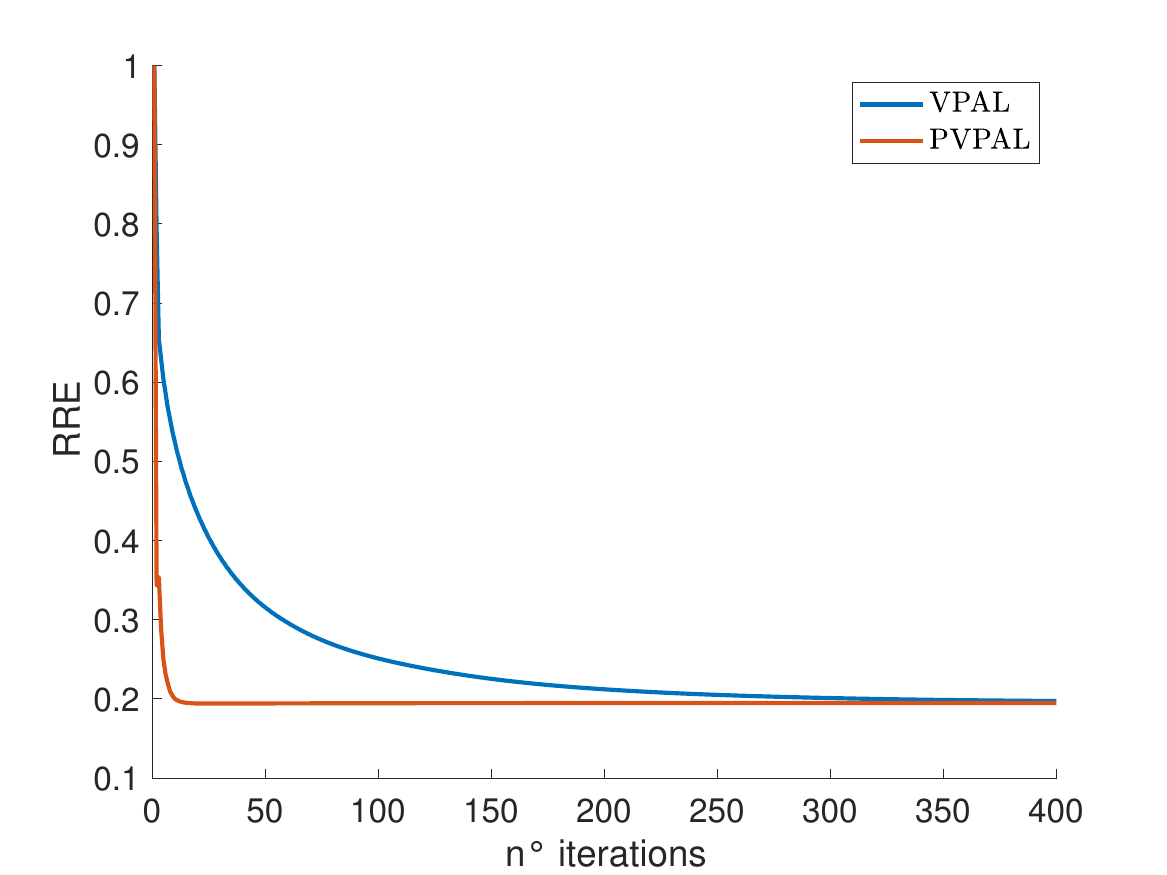}
        \caption{opt: RRE vs Iterations}
    \end{subfigure}
    \begin{subfigure}[b]{0.42\textwidth}
        \includegraphics[width=\textwidth]{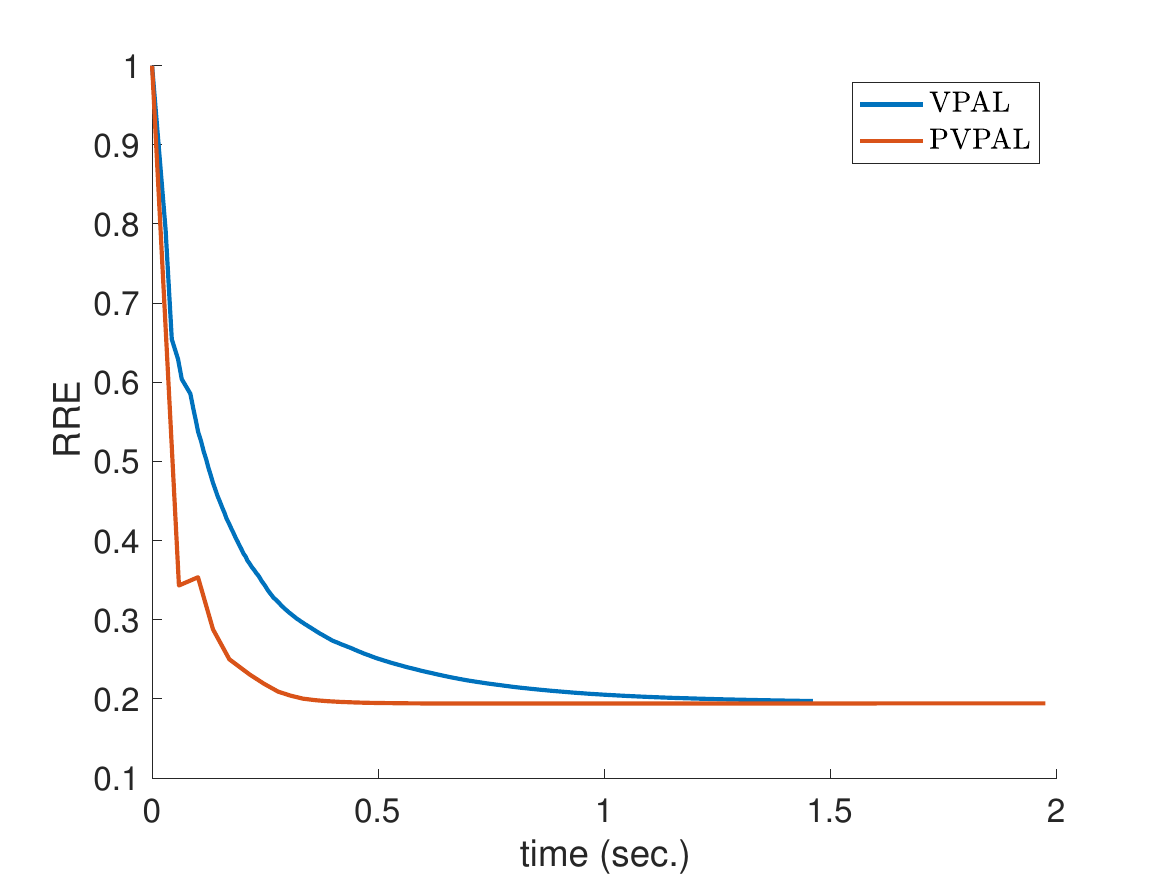}
        \caption{opt: RRE vs Time}

    \end{subfigure}
    \caption{Experiment 3: Comparison between {\tt vpal} and {\tt pvpal} methods. The first row shows the behavior of the RRE with respect to the number of iterations and the corresponding computation time (in seconds) using the optimal step size strategy. In contrast, the second row focuses on the linearized strategy.}
    \label{fig:CT RRE and time}
\end{figure}

\begin{table}[tbhp]
\footnotesize
 \caption{Experiment 3: Comparison between the standard {\tt vpal} method and its preconditioned version, {\tt pvpal}. The top half of the table reports results obtained using the optimal strategy to compute the step size at each iteration, while the bottom half refers to the linearized approach. Values highlighted in \textcolor{matlab1}{blue} indicate the final RRE achieved by {\tt vpal} after 400 iterations. We also report the number of {\tt pvpal} iterations needed to reach the same RRE and the corresponding computation time. Values in \textcolor{matlab2}{red} show the time required by {\tt vpal} to complete 400 iterations, and the performance of {\tt pvpal} within approximately the same amount of time.}
    \label{tab:CT comparison}
    \centering \small
    \begin{tabular}{c|c|c|c|c|c|}
       \cline{2-6}
       & & {\tt vpal} (400 iter) & {\tt pvpal} (400 iter) & {\tt pvpal} (49 iter) & {\tt pvpal} (12 iter) \\ 
       \hline 
       \multirow{2}{*}{\rotatebox[origin=c]{90}{\textbf{opt}}} 
       & \textbf{RRE} & \textcolor{matlab1}{0.1973} & 0.1950 & 0.1943 & \textcolor{matlab1}{0.1971} \\ 
       \cline{2-6} & \textbf{Time (sec.)}
       & \textcolor{matlab2}{1.4644} & 11.257& \textcolor{matlab2}{1.4503} & 0.3888 \\
       \hline
       & & {\tt vpal} (400 iter) & {\tt pvpal} (400 iter) & {\tt pvpal} (35 iter) & {\tt pvpal} (13 iter) \\ \hline
       \multirow{2}{*}{\rotatebox[origin=c]{90}{\textbf{lin}}} 
       & \textbf{RRE} & \textcolor{matlab1}{0.1973} & 0.1950 & 0.1942 & \textcolor{matlab1}{0.1965} \\ 
       \cline{2-6} & \textbf{Time (sec.)}
       & \textcolor{matlab2}{0.8447} & 9.5043 & \textcolor{matlab2}{0.8423} & 0.3306 \\
       \hline
    \end{tabular}
\end{table}

\begin{figure}[b]
    
    \begin{center}
        \includegraphics[width=0.8\textwidth]{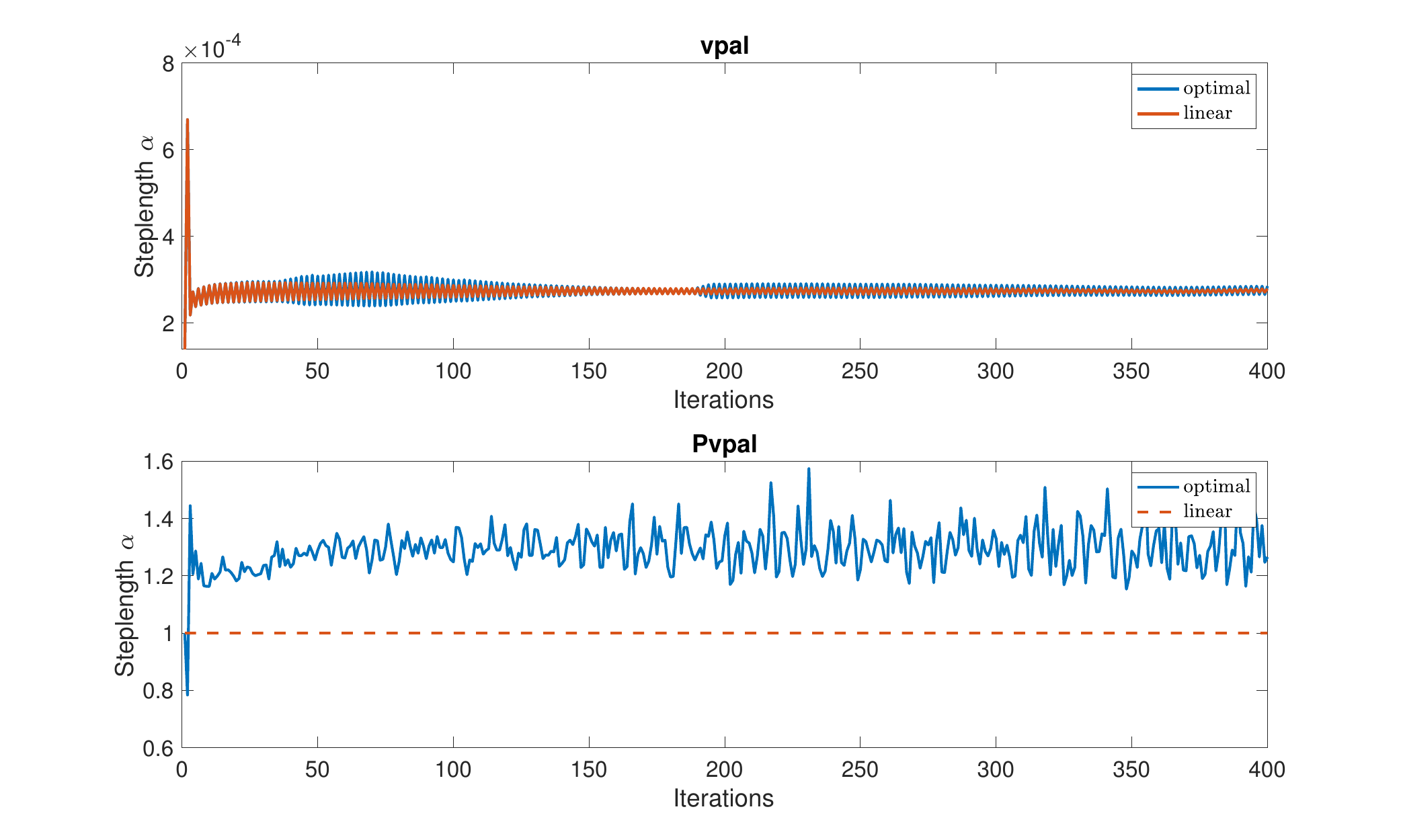}
    \end{center}
    \caption{Experiment 3: Analysis of the step size behavior for both the linear and optimal strategies in the {\tt vpal} and {\tt pvpal} methods.}
    \label{fig:CT step size}
\end{figure}

\section{Nonlinear Numerical Experiments}\label{sec:nonlinear_numEx}
We now demonstrate the performance of the proposed \texttt{vpal} and \texttt{pvpal} algorithms on two challenging real-world nonlinear inverse problems. In \Cref{sec:ptycho}, we study a phase retrieval problem arising in ptychographic imaging. In \Cref{sec:LIPCAR}, we consider the Learned Inverse Problem for Contrast Agent Reduction (LIP-CAR), a medical imaging task involving deep neural networks as forward operators.  These experiments showcase the robustness, flexibility, and efficiency of our methods in highly nonlinear and noisy settings.

About the assumptions discussed in~\Cref{rem:coercivity}, for the LIP-CAR experiment (\Cref{sec:LIPCAR}), several practical strategies exist to enforce convexity in neural networks; see, for example,~\cite{amos2017input,bianchi2023uniformly,goujon2024learning}.
Nevertheless, convex architectures often lead to suboptimal performance, and such constraints are not applied here.
Despite the lack of strict convexity or coercivity guarantees for $f_{\mathrm{joint}}$, and the implementation of very general regularizing terms, the \texttt{vpal} method exhibits stable and consistent reconstructions in all experiments.

\subsection{Experiment 4: Phase retrieval in Ptychographic Imaging}\label{sec:ptycho}

Here, we discuss the problem of recovering a signal from short time Fourier transform samples where the phase information is lost. Such phase retrieval problems commonly occur in applications like ptychographic imaging \cite{iwen2016fast,sissouno2020direct}. For a discrete signal $x\in\mathbb{C}^N$ the (periodic) short time Fourier transform (STFT) $F_w\colon\mathbb{C}^N\rightarrow\mathbb{C}^{N\times N}$ is defined as
\begin{equation}\label{eq:stft}
    F_w(x)=\left(\sum\limits_{n=0}^{N-1}x_nw_{(n-j\mod N)}\operatorname{e}^{2\pi \operatorname{i}\frac{kn}{N}}\right)_{j,k=0}^{N-1},
\end{equation}
where $w\in\mathbb{C}^N$ is a discrete window, often real-valued and compactly supported with $w_n=0$ for $n\geq K\ll N$. The forward operator for the phase retrieval problem now reads as
\begin{align}\label{eq:ptyhco}
    \widetilde{A}\colon\mathbb{C}^N\rightarrow\mathbb{R}^{N\times N}, && \widetilde{A}(x)=|F_w(x)|^2,
\end{align}
where $|\cdot|^2$ is applied element-wise. The operator $\widetilde{A}$ is nonlinear but can be linearized by lifting into a higher dimensional space \cite{iwen2016fast,sissouno2020direct}. We compare phase retrieval using \texttt{vpal}/\texttt{pvpal} against the linearized approach presented in \cite{sissouno2020direct}. Since $|\cdot|^2$ is not complex differentiable we split $x$ into its amplitude and phase part and define the operator $T\colon\mathbb{R}^N\times\mathbb{R}^N\rightarrow\mathbb{R}^N$ as
\begin{equation}
    T(x^\textnormal{amp},x^\textnormal{phase})=\left(x^\textnormal{amp}_k\operatorname{e}^{ix_k^\textnormal{phase}}\right)_{k=0}^{N-1}.
\end{equation}
Then we can define a real-variable differentiable forward operator $A\colon\mathbb{R}^N\times\mathbb{R}^N\rightarrow\mathbb{R}^{N\times N}$ as
\begin{equation}
    A(x^\textnormal{amp},x^\textnormal{phase})=\widetilde{A}(T(x^\textnormal{amp},x^\textnormal{phase})).
\end{equation}
A common strategy to reduce measurement time in applications is to only sample every $s$-th shift of the short time Fourier transform, where $1\leq s< K$. To simulate this, we define the subsampling operator $P_s\colon\mathbb{R}^{N\times N}\rightarrow\mathbb{R}^{\lceil\frac{N}{s}\rceil\times N}$ with $P_s(x)=\left(x_{js,k}\right)_{j,k=0}^{\lceil\frac{N}{s}\rceil-1, N-1}$, i.e., sampling only every $s$-th row. We then define the group of forward operators $A_s\colon\mathbb{R}^N\times\mathbb{R}^N\rightarrow\mathbb{R}^{\lceil\frac{N}{s}\rceil\times N}$
\begin{equation}
    A_s(x^\textnormal{amp},x^\textnormal{phase})=P_s(A(x^\textnormal{amp},x^\textnormal{phase})),
\end{equation}
where $A_1=A$.

In preliminary experiments, using \texttt{vpal} with a finite difference regularization operator $D$ and linear step size choice, results often converged to the true solution. However, the gradient shows oscillatory behavior when close to a minimum. To counter this very small step sizes are required, which then lead to very slow convergence. Hence, we implemented an adjusted step size choice introduced in the next subsection.

\subsubsection{Step size choice}

For a given point $x^\textnormal{amp},x^\textnormal{phase}\in\mathbb{R}^N$ and gradient $g^\textnormal{amp},g^\textnormal{phase}\in\mathbb{R}^N$ we seek the optimal step size $\widetilde{\alpha}>0$ which minimizes
\begin{equation}\label{eq:optstepsize}
    f_{\mu}(\alpha)=\tfrac{1}{2} \left\| A_s(x^\textnormal{amp}-\alpha g^\textnormal{amp},x^\textnormal{phase}-\alpha g^\textnormal{phase}) - b \right\|_2^2 + \mu \left\| D\begin{pmatrix}x^\textnormal{amp}-\alpha x^\textnormal{amp}\\x^\textnormal{phase}-\alpha g^\textnormal{phase}\end{pmatrix} \right\|_1.
\end{equation}
If we ignore the phase gradient in the data fidelity term for now and use the linearity of the Fourier transform, we can show that
\begin{equation}\label{eq:polyApproximation}
    A_s(x^\textnormal{amp}-\alpha g^\textnormal{amp},x^\textnormal{phase})
    =
    P_s\left(\left|F_w(T(x^\textnormal{amp},x^\textnormal{phase}))-\alpha F_w(T( g^\textnormal{amp},x^\textnormal{phase}))\right|^2\right),
\end{equation}
which is a polynomial of degree $2$ in $\alpha$. Hence, the data fidelity term in \Cref{eq:optstepsize} can be approximated by a polynomial of degree $4$ if the impact of the phase update is small compared to the amplitude update (which aligns with our observations). Furthermore, the regularizer in \Cref{eq:optstepsize} behaves linearly whenever we are close to the true solution. This inspires the following strategy:

Sample $f_{\mu}(\alpha)$ for different values of $\alpha$. Based on these samples, find an approximation polynomial of degree 4. Calculate the first real non-negative extreme point $\widetilde{\alpha}$ of this polynomial. As we expect $f_{\mu}(\alpha)$ to decrease, this should be a minimum. If no such extreme point exists or if it is a maximum, we repeat this process with slight variations: First, we sample $f_{\mu}(\alpha)$ for small values of $\alpha=0,0.001,0.005,0.01,0.1$ as we expect the optimal step size to be small in general. If this does not succeed, we add another sampling point $\alpha=1$ to search for a larger step size. Finally, we enforce convexity of the approximation polynomial by adding the (nonlinear) constraints $a_4>0$ and $3a_3-8a_4a_2<0$ for the coefficients of the polynomial. (These constraints ensure that the second derivative, which is a quadratic polynomial, is positive.) If all three approaches fail, we fall back to a default step size of $\widetilde{\alpha}=0.001$. Altogether, we obtain the following algorithm:

\begin{algorithm}[H]
\caption{Phase retrieval step size choice}
\label{alg:phase-step}
\begin{algorithmic}[1]\small
\Require
\Statex $A_s(x^\textnormal{amp},x^\textnormal{phase})$ calculated by \texttt{vpal} or \texttt{pvpal},
\Statex Parameter $\mu$ from \texttt{vpal}/\texttt{pvpal} and $\widetilde{\mu}$ from last call of this function,
\Statex Function $f_{\widetilde{\mu}}$ 
\Statex
\If{first function call} $\widetilde{\mu}=0$ \Comment{Include/Exclude regularizer}\EndIf
\If{$\widetilde{\mu}=0$ and \texttt{vpal}/\texttt{pvpal} approximation error increased} $\widetilde{\mu}=\mu$ \EndIf
\Statex
\State Calculate $f_{\widetilde{\mu}}(0)$ from $A_s(x^\textnormal{amp},x^\textnormal{phase})$ \Comment{search small step size}
\State Sample $f_{\widetilde{\mu}}(\alpha)$ for $\alpha=0.001,0.005,0.01,0.1$
\State Approximate $f_{\widetilde{\mu}}(\alpha)\approx\textnormal{pol}(\alpha)$ by a degree 4 polynomial 
\State Find first real non-negative root $\widetilde{\alpha}$ of $\textnormal{pol}'(\alpha)$ 
\If{$\widetilde{\alpha}$ exists and $\textnormal{pol}''(\widetilde{\alpha})>0$} \Return $\widetilde{\alpha}$ \EndIf
\Statex
\State Add additional sampling point $f_{\widetilde{\mu}}(1)$ \Comment{search large step size}
\State Approximate $f_{\widetilde{\mu}}(\alpha)\approx\textnormal{pol}(\alpha)$ by a degree 4 polynomial 
\State Find first real non-negative root $\widetilde{\alpha}$ of $\textnormal{pol}'(\alpha)$ 
\If{$\widetilde{\alpha}$ exists and $\textnormal{pol}''(\widetilde{\alpha})>0$} \Return $\widetilde{\alpha}$ \EndIf
\Statex 
\State Approximate $f_{\widetilde{\mu}}(\alpha)\approx\textnormal{pol}(\alpha)=a_0+a_1\alpha+a_2\alpha^2+a_3\alpha^3+a_4\alpha^4$ \Comment{Force convexity}
\Statex Force convexity by requiring $a_4>0$ and $3a_3-8a_4a_2<0$ 
\State Find first real non-negative root $\widetilde{\alpha}$ of $\textnormal{pol}'(\alpha)$ 
\If{$\widetilde{\alpha}$ exists and $\textnormal{pol}''(\widetilde{\alpha})>0$} \Return $\widetilde{\alpha}$ \EndIf
\Statex
\State \Return $\widetilde{\alpha}=0.001$.\Comment{default step size}
\end{algorithmic}
\end{algorithm}

In lines $2-3$ an auxiliary variable $\widetilde{\mu}$ is set to $0$ for the first iterations and updated to $\widetilde{\mu}=\mu$ later on. This ensures that the regularizer in \Cref{eq:optstepsize} is ignored for the first iterations where it has a more nonlinear behavior and only taken into account once it behaves more linear.

This strategy is designed to minimize the number of computational expensive function calls of the forward operator $A_s$. Additional sampling points can be added to stabilize the method if required. On the other hand, directly replacing the data fidelity term in \Cref{eq:optstepsize} with the polynomial approximation \Cref{eq:polyApproximation} can reduce the number of additional forward operator calls to just one. However, the obtained step size is less accurate and increases the number of iterations such that it did not lead to any advantage in our experiments.

\subsubsection{Experimental setup}

We run \texttt{vpal} and \texttt{pvpal} with $\mu=0.05$, $\lambda=0.5$, and $D$ the finite difference operator. To keep the runtime small, we use the preconditioner only during the first $20$ iterations in \texttt{pvpal} where it has the most significant effects. The achieved results after $2000$ iterations are then compared against the linearized approach. The signal size is set to $N=100$, i.e., $x\in\mathbb{C}^{100}$ and $x^\textnormal{amp},x^\textnormal{phase}\in\mathbb{R}^{100}$. The test signal set consist of $1000$ piecewise constant vectors with $10$ jumps at randomly chosen locations. Each interval has a random complex value drawn from a Gaussian distribution. The short time Fourier transform window $w\in\mathbb{C}^{100}$ has a support length of $K=10$. We use an exponential window $w^\textnormal{exp}$ and a Gaussian window $w^\textnormal{gauss}$ defined as
\begin{align}
    w_k^\textnormal{exp}=
    \begin{cases}
        \operatorname{e}^{-\frac{5}{9}k}, & 0 \leq k<10 \\
        0, & \textnormal{otherwise}
    \end{cases}, &&
    w_k^\textnormal{gauss}=
    \begin{cases}
        \operatorname{e}^{-\left(\frac{5}{9}k-\frac{5}{2}\right)^2}, & 0 \leq k<10 \\
        0, & \textnormal{otherwise}
    \end{cases}.
\end{align}
The exponential window creates a stable forward operator for the linearized approach \cite{iwen2016fast} (and STFT shift $s=1$). The Gaussian window is closer to the application but ill-posed \cite{melnyk2021stable}.

We note that $x$ can only be recovered up to a global phase shift from \eqref{eq:ptyhco}. Moreover, the phase is $2\pi$ periodic. Hence, we measure the phase error of the reconstructed signal by computing the mean squared error on the phase differences where each value is mapped to its $2\pi$ periodic equivalent with smallest absolute value:
\begin{align}\label{eq:phaseMSE}
    \textnormal{PhaseMSE}(x^\textnormal{approx}) &=
    \frac{1}{N-1}\sum\limits_{k=1}^{N-1}\textnormal{PerMin}\left(
    (x^\textnormal{org,phase}_k - x^\textnormal{org,phase}_{k-1})
    -
    (x^\textnormal{approx,phase}_k - x^\textnormal{approx,phase}_{k-1})
    \right)^2,\\
    \textnormal{PerMin}(v) &= \min_{k\in\mathbb{Z}}|v-2k\pi|.
\end{align}

\subsubsection{\texttt{vpal} and \texttt{pvpal} evaluation}

First, we note that both \texttt{vpal} and \texttt{pvpal} fail to converge for a small number of test samples. Without preconditioner \texttt{vpal} fails in $0.5\%$ (exponential window) and $15.6\%$ (Gaussian window) of the samples. Using the proposed preconditioner strategy, \texttt{pvpal} fails in $0.5\%$ (exponential window) and only $1.4\%$ (Gaussian window) of the samples, i.e., for the ill-posed Gaussian window \texttt{pvpal} is much more stable. In all cases, restarting the method with a different random starting guess can resolve the issue. The failed approaches are not taken into account for the following error statistics.

In \Cref{fig:PR_MSE} we show the Mean Squared Error (MSE) using \texttt{vpal} and \texttt{pvpal} plotted against the STFT shift $s$ used. The MSE is shown for the reconstructed amplitude $x^\textnormal{amp}$, phase $x^\textnormal{phase}$ (see \Cref{eq:phaseMSE}), and for the residual. \Cref{fig:PR_MSE} (a,b,c) shows the MSE for reconstructions based on the exponential window while \Cref{fig:PR_MSE} (d,e,f) uses data obtained with the Gaussian window. In both cases \texttt{vpal} and \texttt{pvpal} outperform the linear approach for the amplitude and residual MSE whenever the STFT shift $s>1$. Only for the Gaussian window the linearized approach has a better performance reconstructing the phase. Both, \texttt{vpal} and \texttt{pvpal} return comparable results, where the latter one is slightly better on phase and residual error but has a higher variance for the amplitude..

\begin{figure}[htbp]
    \centering
    \begin{subfigure}[b]{0.24\textwidth}
    \includegraphics[width=\textwidth]{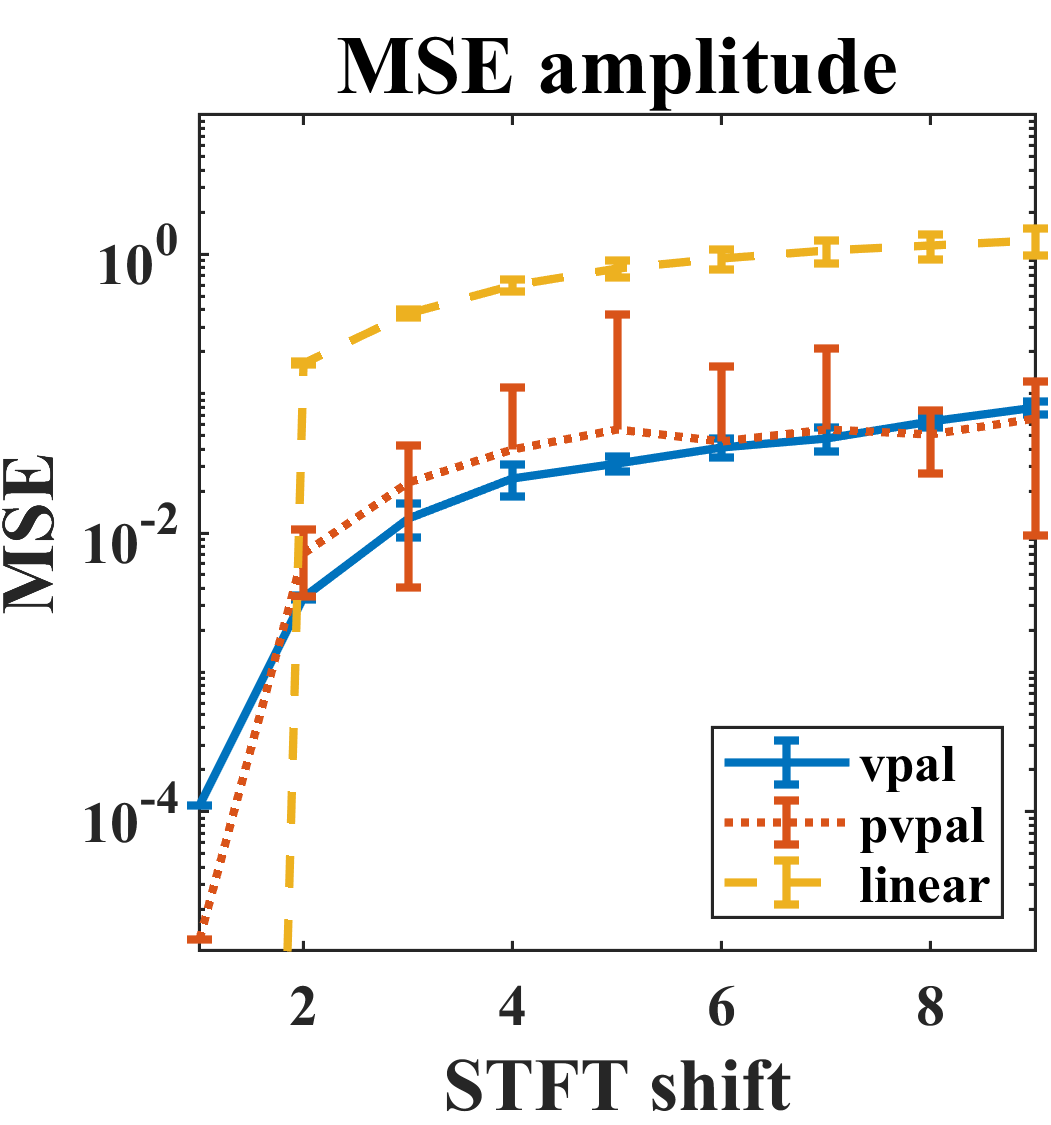}
    \caption{}
    \end{subfigure}
    \begin{subfigure}[b]{0.24\textwidth}
    \includegraphics[width=\textwidth]{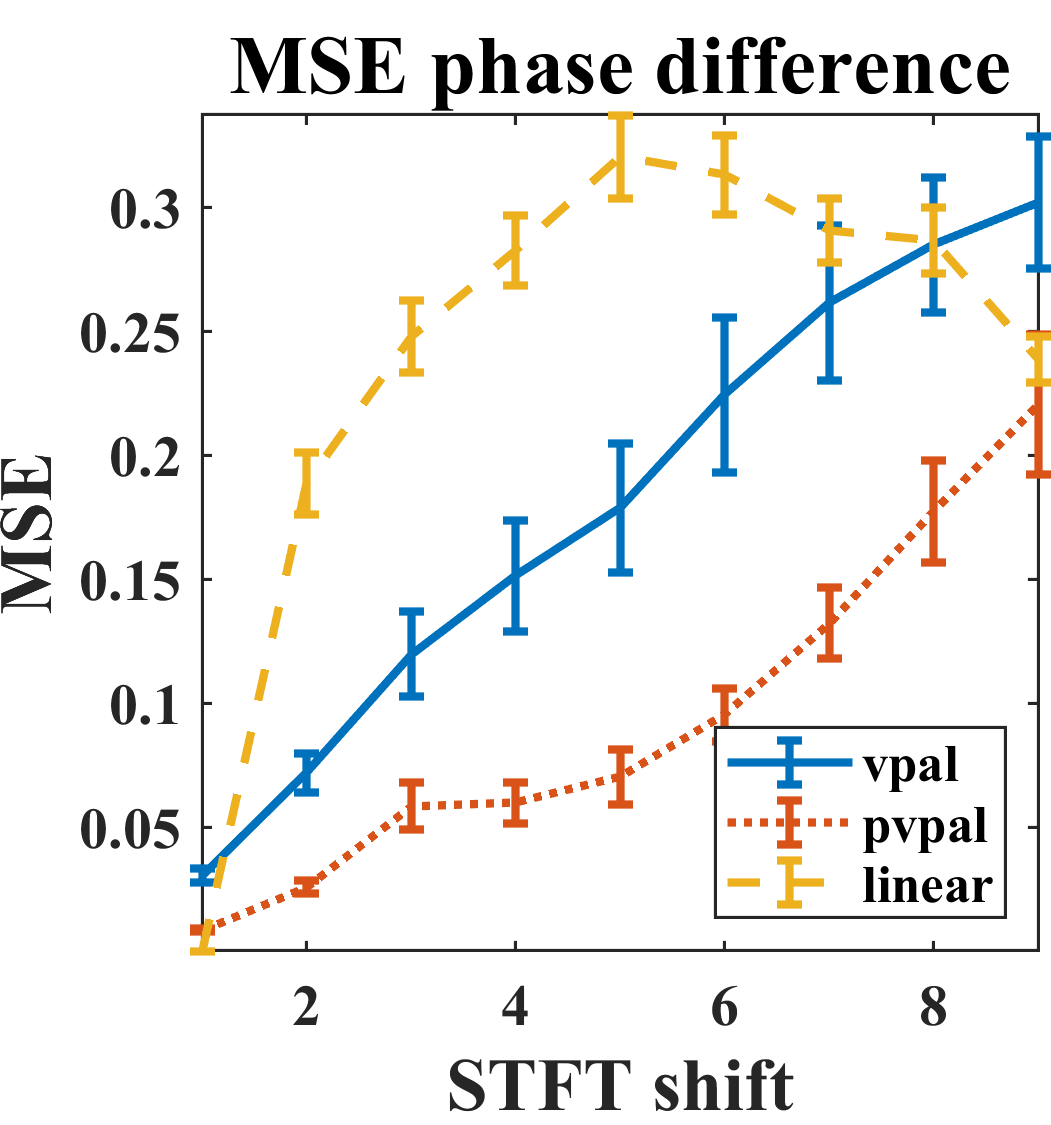}
    \caption{}
    \end{subfigure}
    \begin{subfigure}[b]{0.24\textwidth}
    \includegraphics[width=\textwidth]{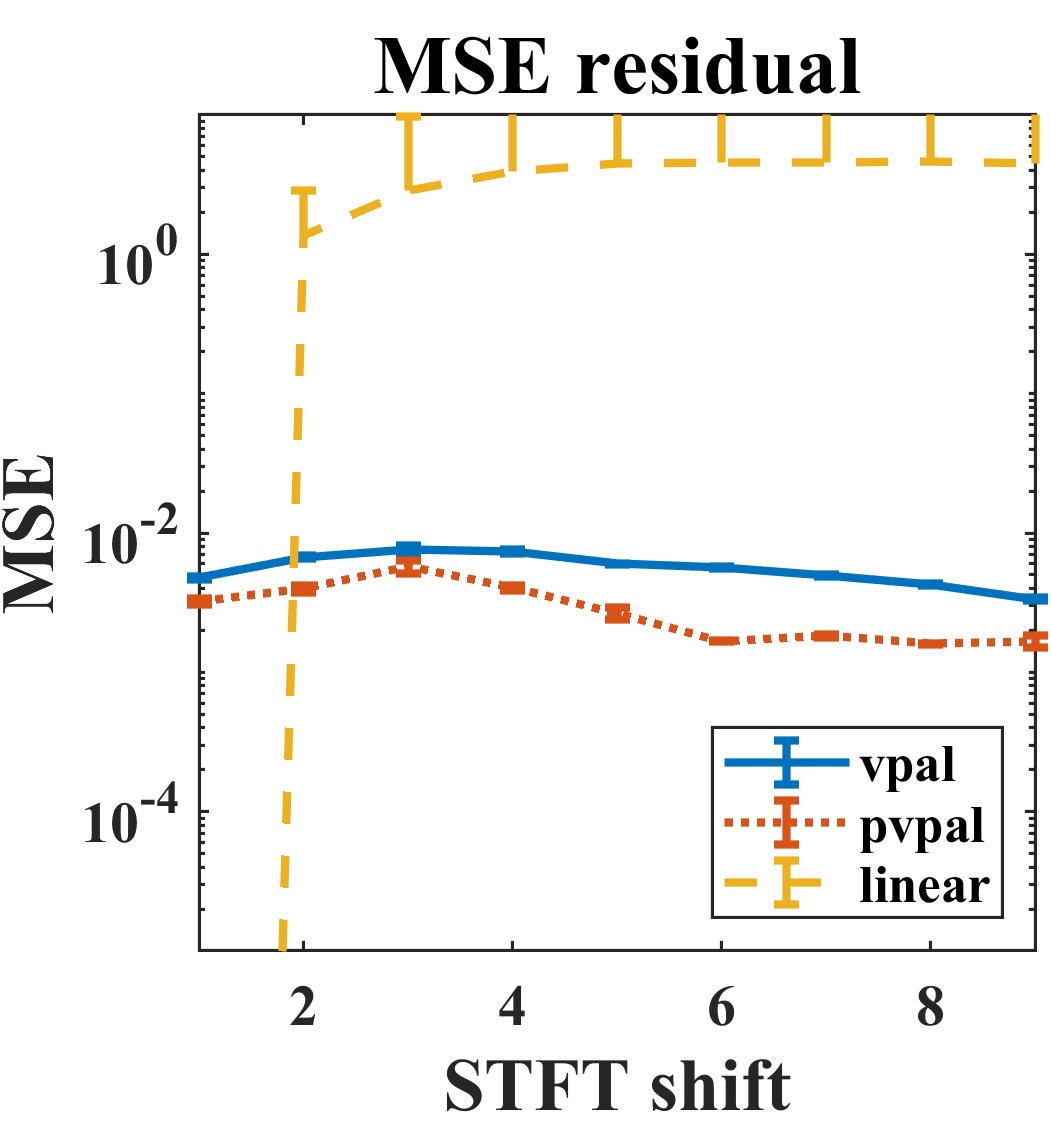}
    \caption{}
    \end{subfigure}\\
    \begin{subfigure}[b]{0.24\textwidth}
    \includegraphics[width=\textwidth]{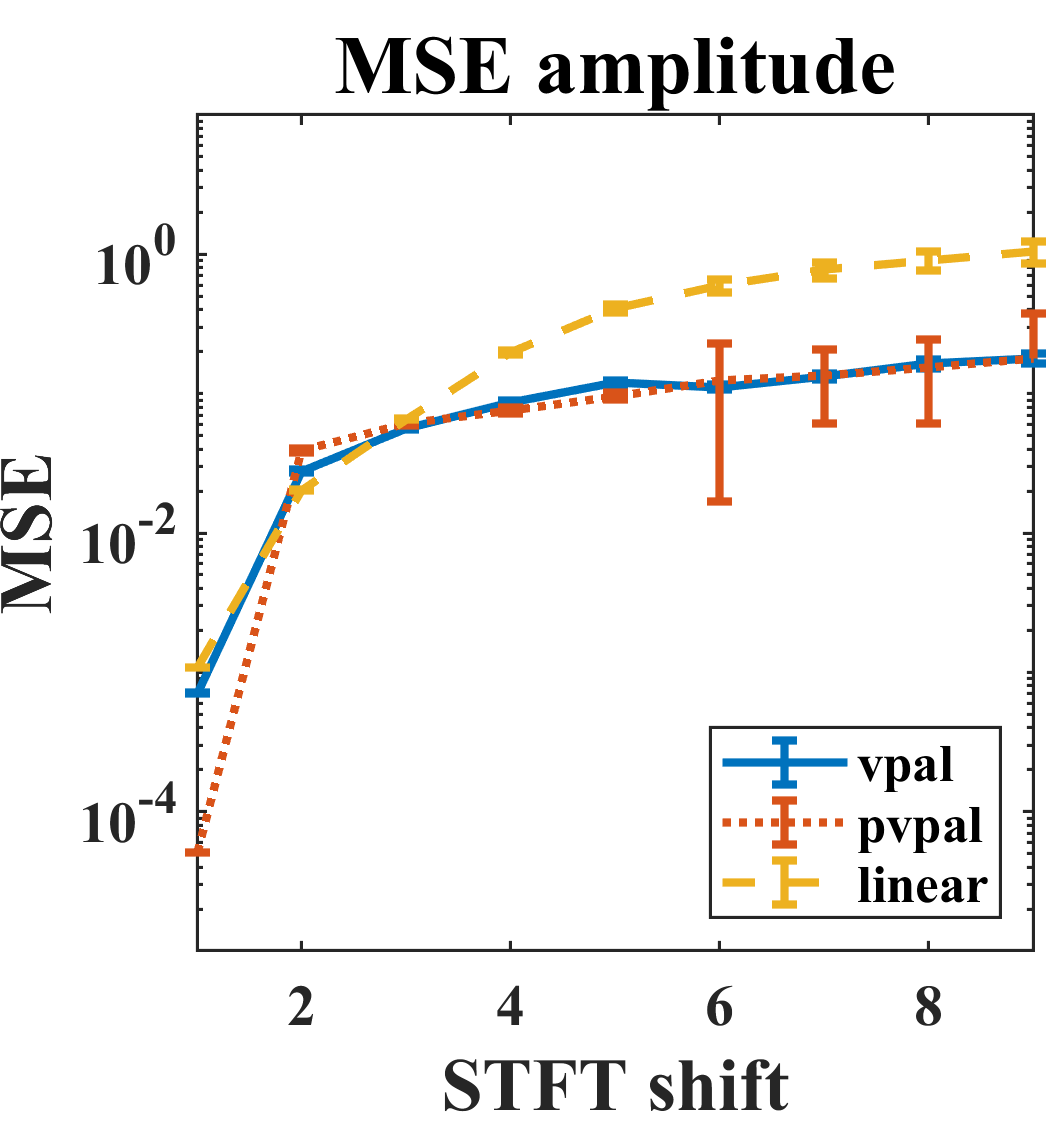}
    \caption{}
    \end{subfigure}
    \begin{subfigure}[b]{0.24\textwidth}
    \includegraphics[width=\textwidth]{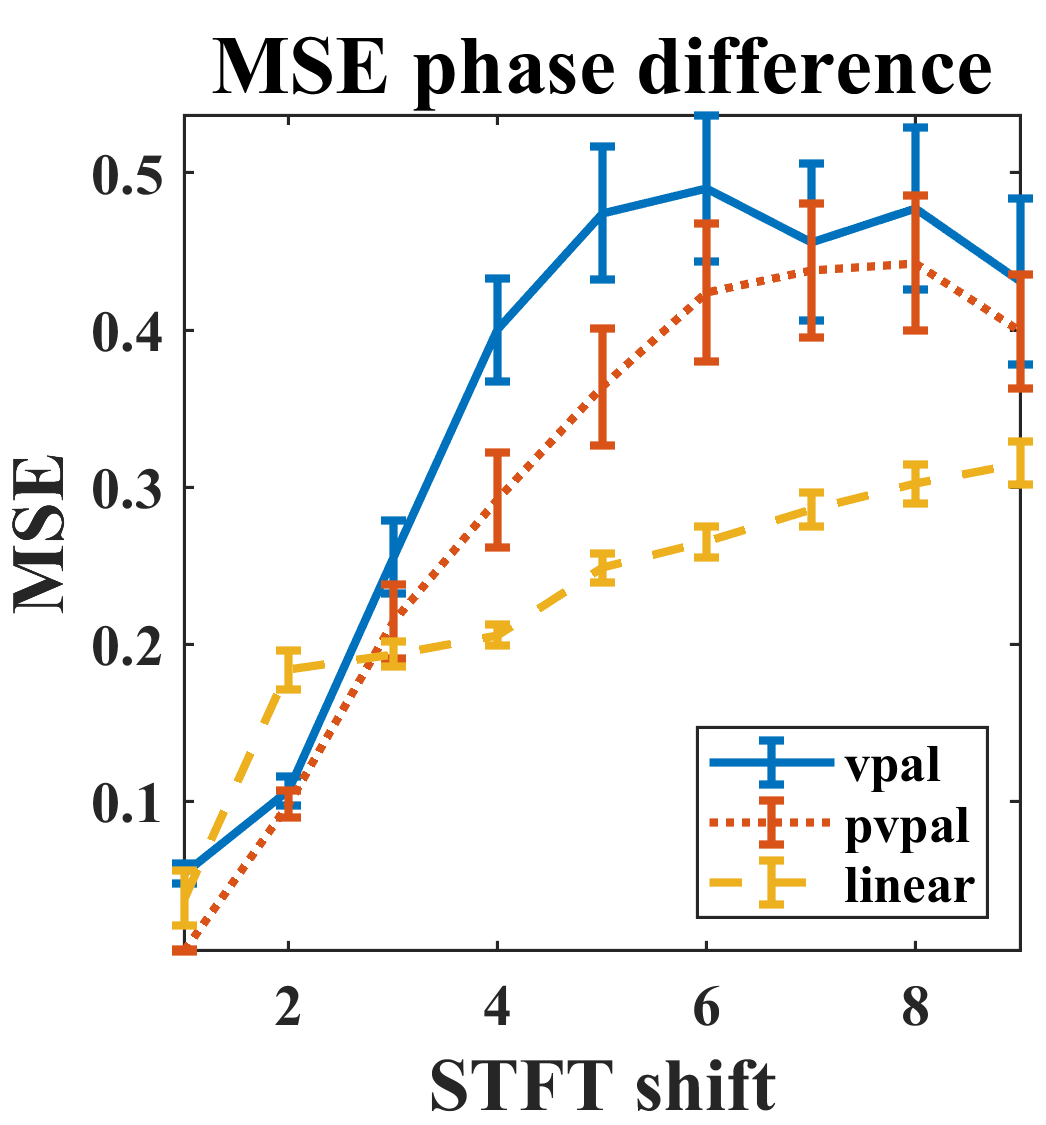}
    \caption{}
    \end{subfigure}
    \begin{subfigure}[b]{0.24\textwidth}
    \includegraphics[width=\textwidth]{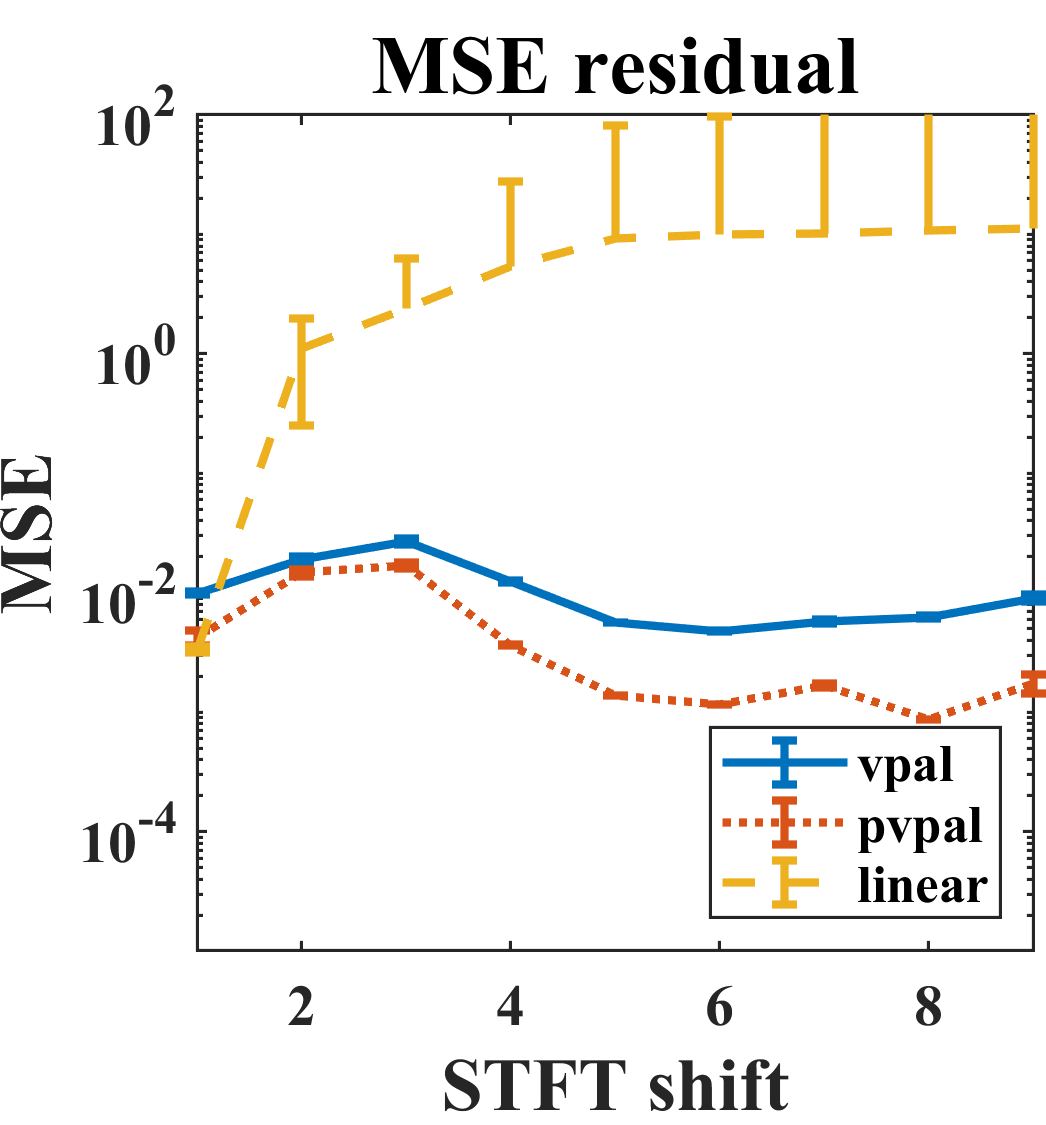}
    \caption{}
    \end{subfigure}
    \caption{Experiment 4: Mean Squared Error of amplitude (a,d) and phase (b,e) for the reconstructed solution as well as Mean Squared Error of the residual (c,f) plotted against STFT shift $s$, where (a,b,c) use the STFT with exponential window and (d,e,f) apply the Gaussian window.}
    \label{fig:PR_MSE}
\end{figure}

To have a more detailed comparison between \texttt{vpal} and \texttt{pvpal}, we present the achieved minimization error for each iteration in \Cref{fig:PR_iteration} for both the exponential and Gaussian window. We show the error for a STFT shift $s=1$ and $s=9$ exemplary, the other cases have a similar behavior. While both strategies converge, \texttt{pvpal} has a much faster convergence rate. While \texttt{vpal} requires all $2000$ iterations to converge, \texttt{pvpal} already settles after about $50$ iterations as the zoomed in segments in \Cref{fig:PR_iteration} show. The oscillations that can be observed especially in the first few iterations stem from test samples with missteps. Since \texttt{pvpal} is more likely to recover from such missteps than \texttt{vpal} the oscillations are more present here.

\begin{figure}[htbp]
    \centering
    \begin{subfigure}[b]{0.24\textwidth}
    \includegraphics[width=\textwidth]{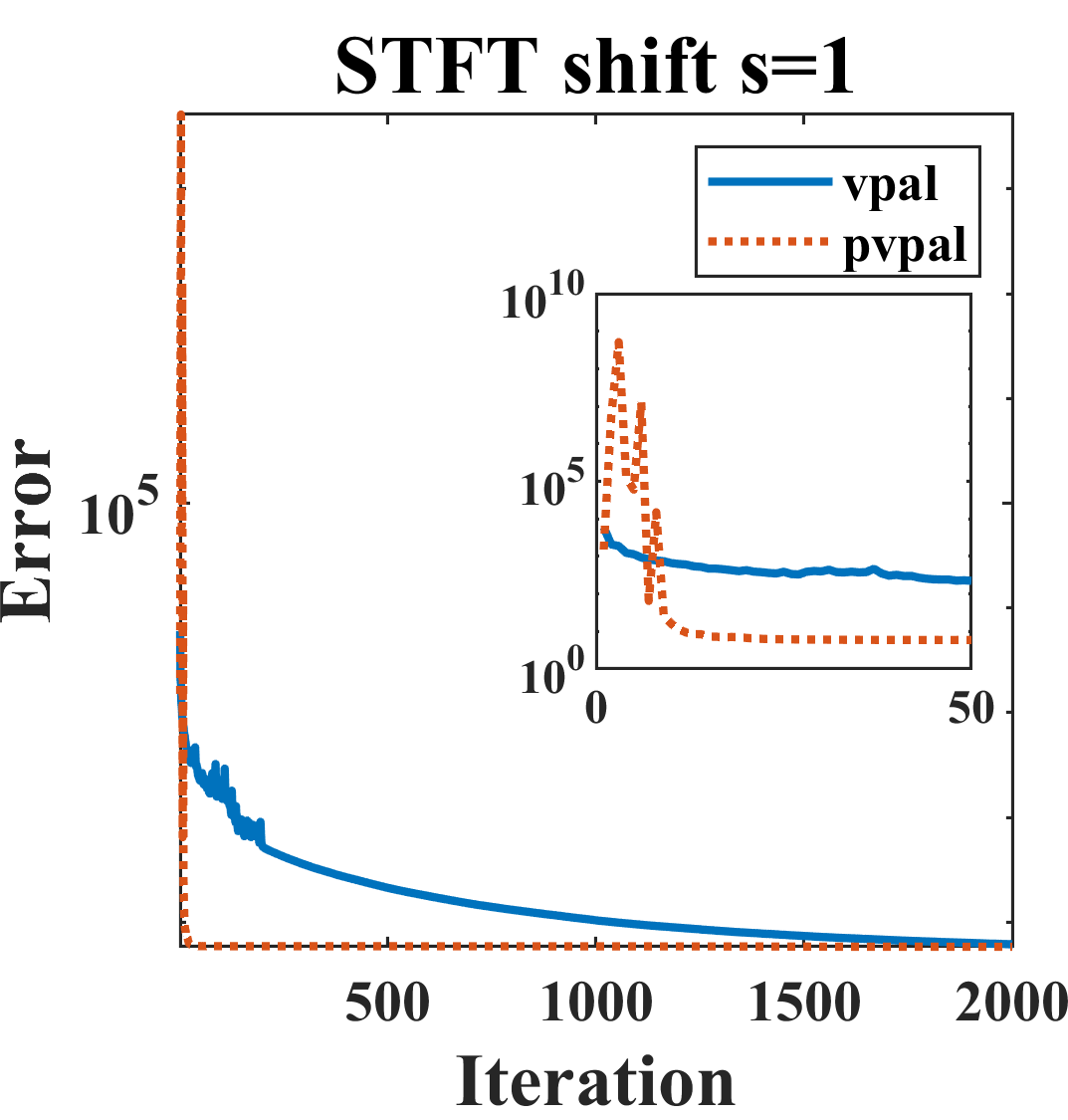}
    \caption{}
    \end{subfigure}
    \begin{subfigure}[b]{0.24\textwidth}
    \includegraphics[width=\textwidth]{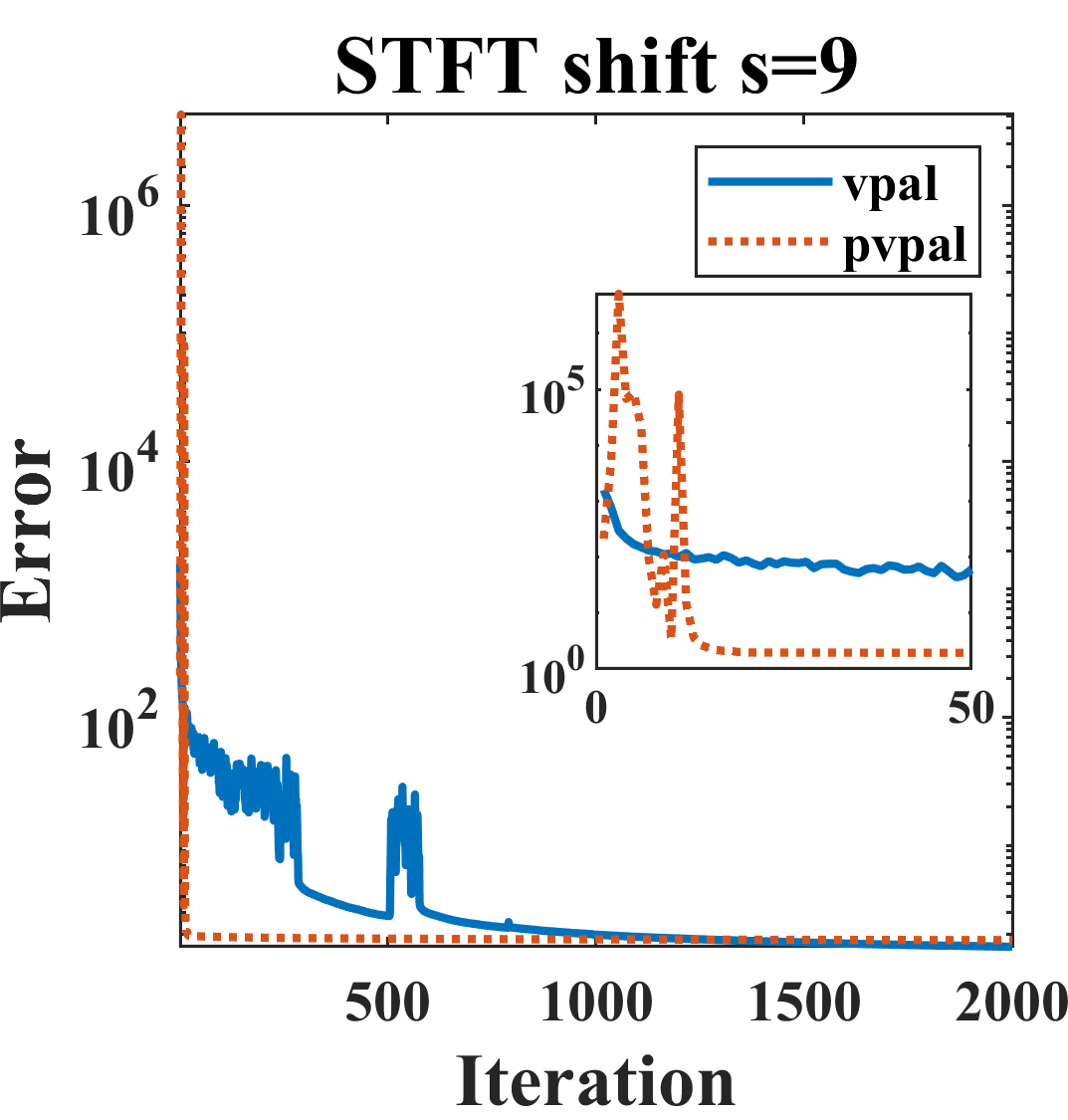}
    \caption{}
    \end{subfigure}
    \begin{subfigure}[b]{0.24\textwidth}
    \includegraphics[width=\textwidth]{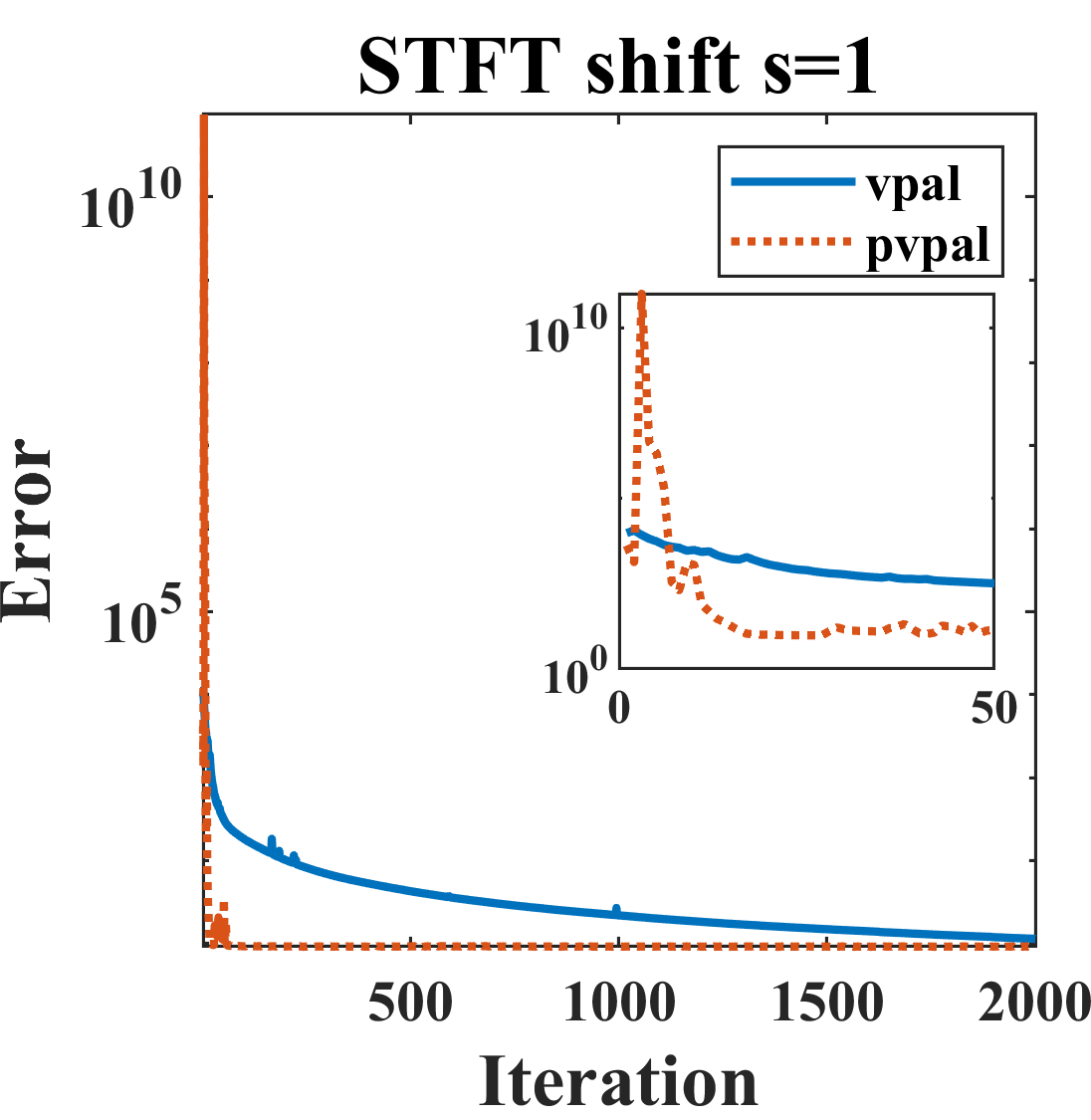}
    \caption{ }
    \end{subfigure}
    \begin{subfigure}[b]{0.24\textwidth}
    \includegraphics[width=\textwidth]{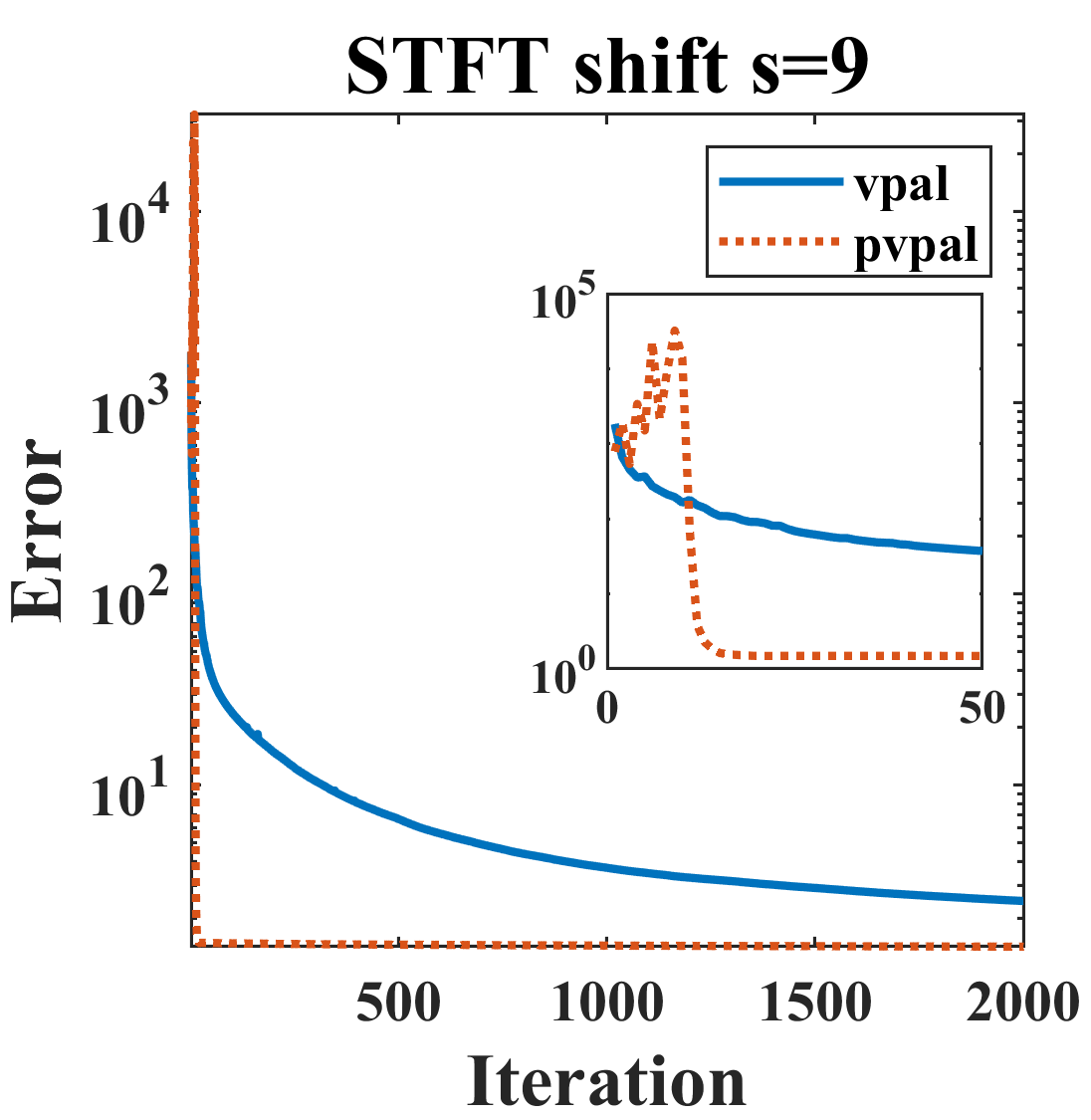}
    \caption{}
    \end{subfigure}
    \caption{Experiment 4: Minimization error during \texttt{vpal} and \texttt{pvpal} iterations. We use the STFT with exponential window (a,b) and Gaussian window (c,d) and a STFT shift $s=1$ (a,c) and $s=9$ (b,d). The small graph is zoomed in on the first $50$ iterations.}
    \label{fig:PR_iteration}
\end{figure}

We present one reconstruction example in \Cref{fig:PR_example} for a STFT shift $s=2$ using the exponential window (\Cref{fig:PR_example} (a,b)) and Gaussian window (\Cref{fig:PR_example} (c,d)). In \Cref{fig:PR_example} (e,f,g,h) we provide enlarged versions of the graphs for a better comparison. Especially for the exponential window The linearized reconstruction shows oscillating behavior on the amplitude. This effect was already observed in the original work \cite{sissouno2020direct}. Both \texttt{vpal} and \texttt{pvpal} return a much more accurate amplitude. \texttt{vpal} shows Gibbs phenomena when reconstructing the amplitude at jump discontinuities with exponential window while \texttt{pvpal} has similar effects under the Gaussian window. In both cases \texttt{pvpal} returns the most accurate phase.

\begin{figure}[htbp]
    \centering
    \begin{subfigure}[b]{0.24\textwidth}
    \includegraphics[width=\textwidth]{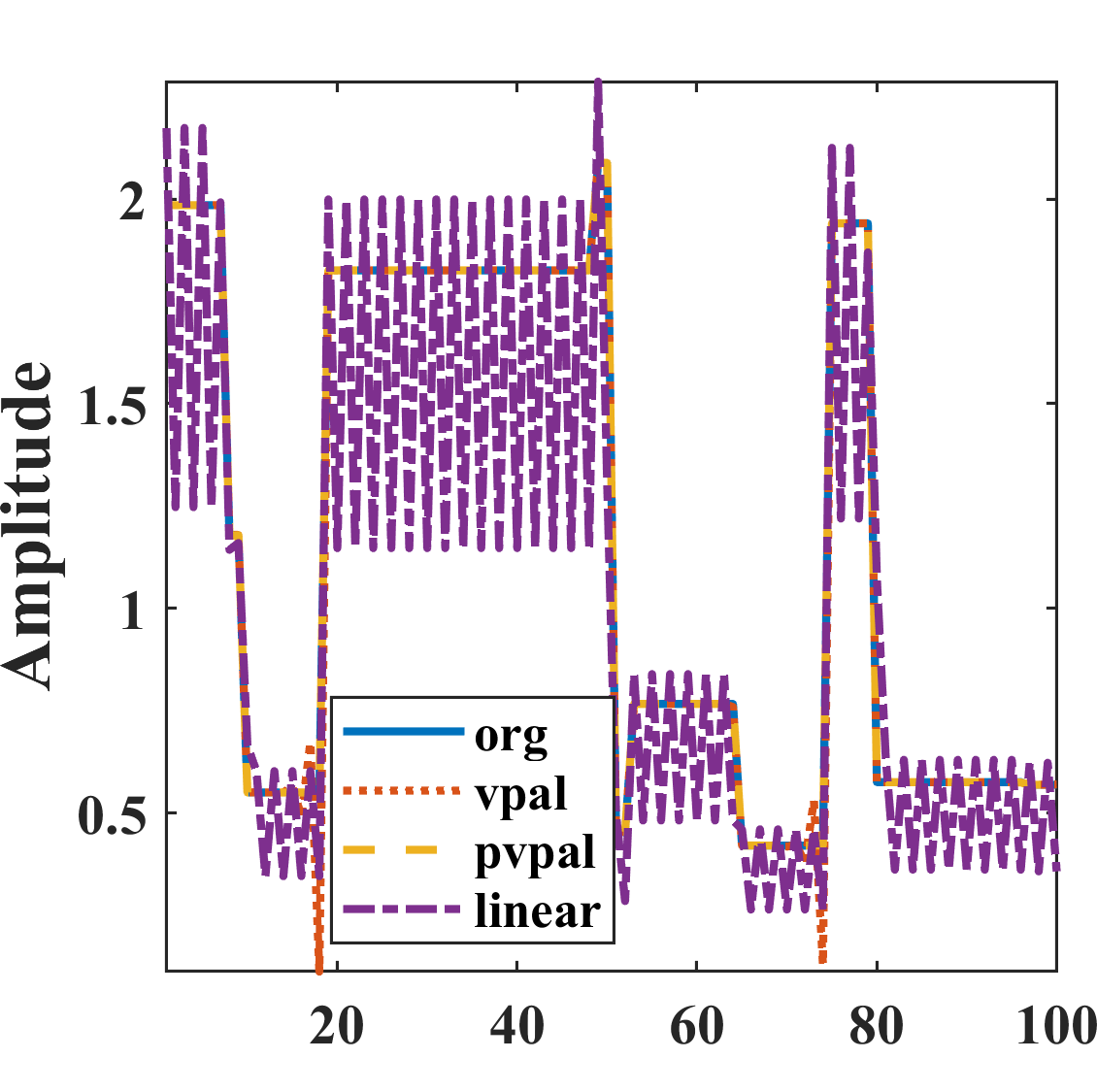}
    \caption{}
    \end{subfigure}
    \begin{subfigure}[b]{0.24\textwidth}
    \includegraphics[width=\textwidth]{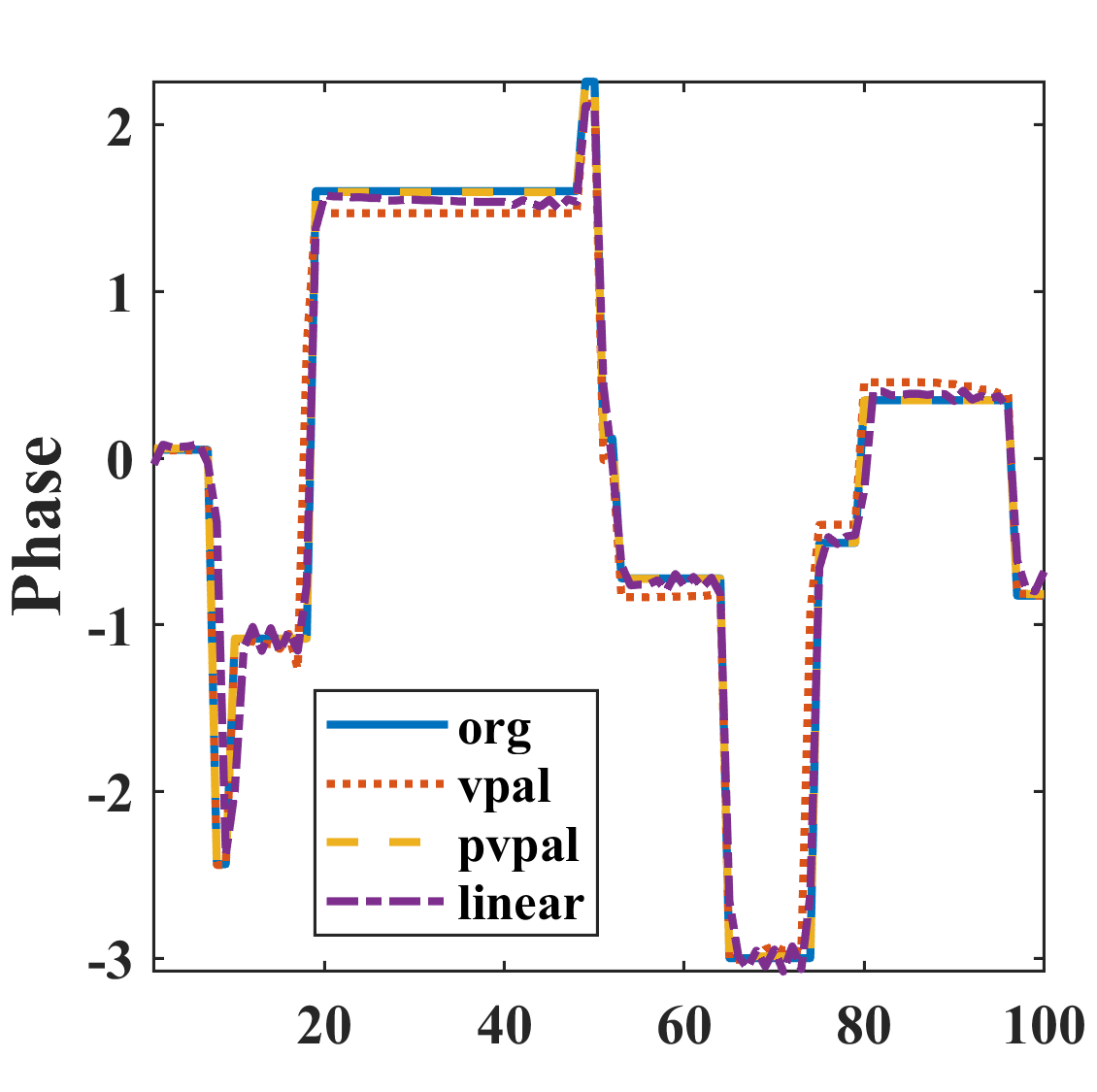}
    \caption{}
    \end{subfigure}
    \begin{subfigure}[b]{0.24\textwidth}
    \includegraphics[width=\textwidth]{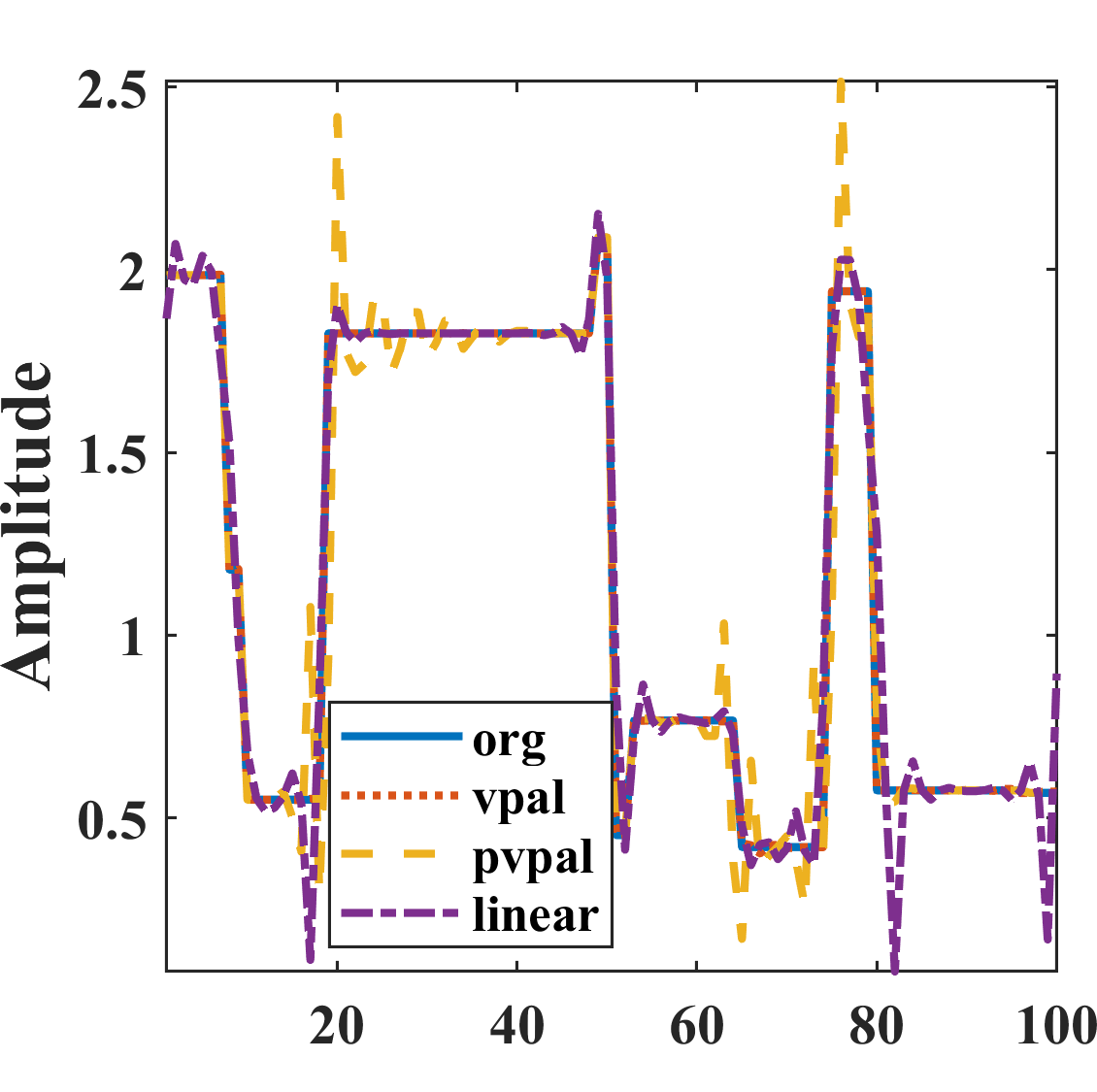}
    \caption{}
    \end{subfigure}
    \begin{subfigure}[b]{0.24\textwidth}
    \includegraphics[width=\textwidth]{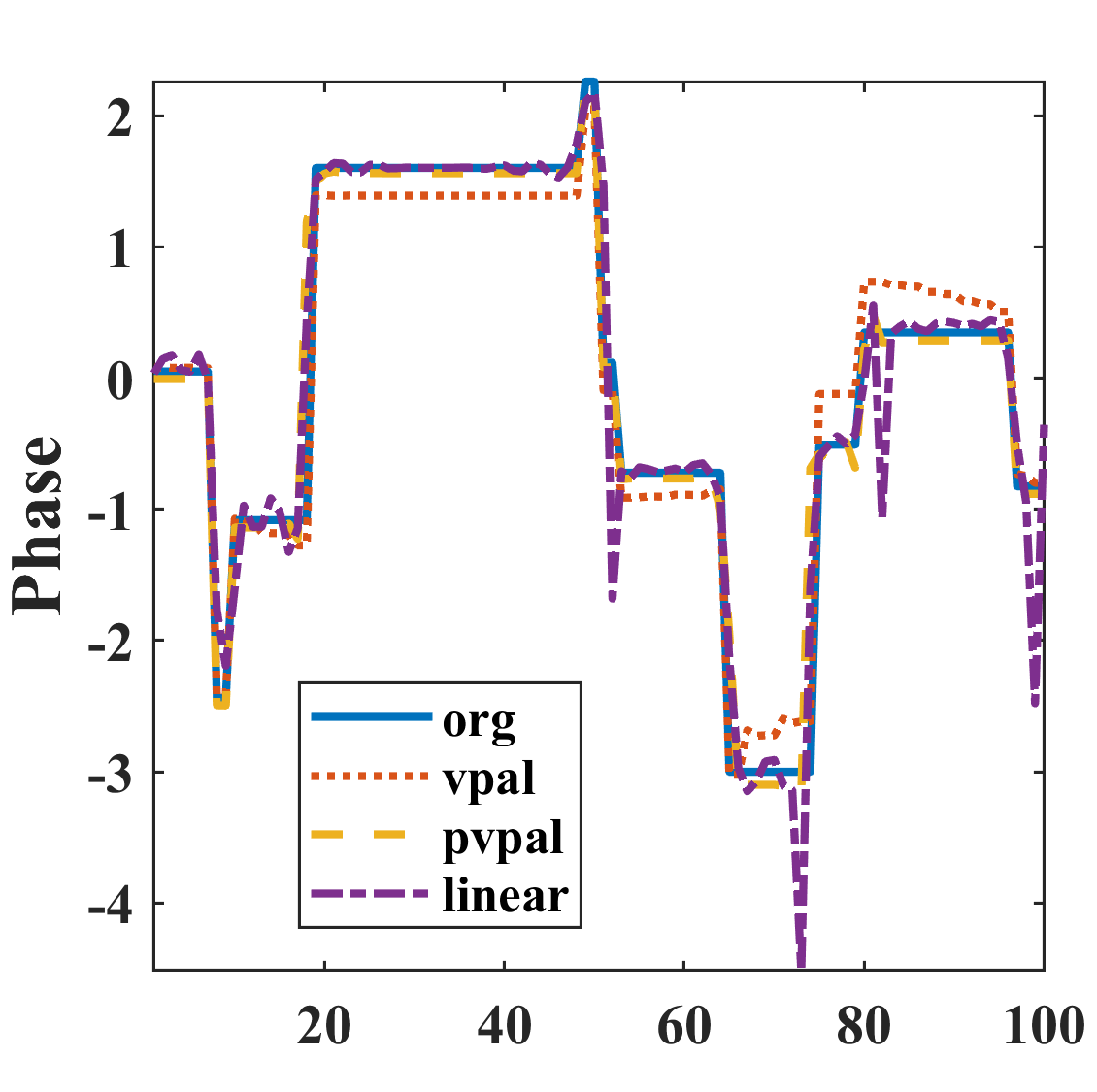}
    \caption{}
    \end{subfigure}
    \begin{subfigure}[b]{0.24\textwidth}
    \includegraphics[width=\textwidth]{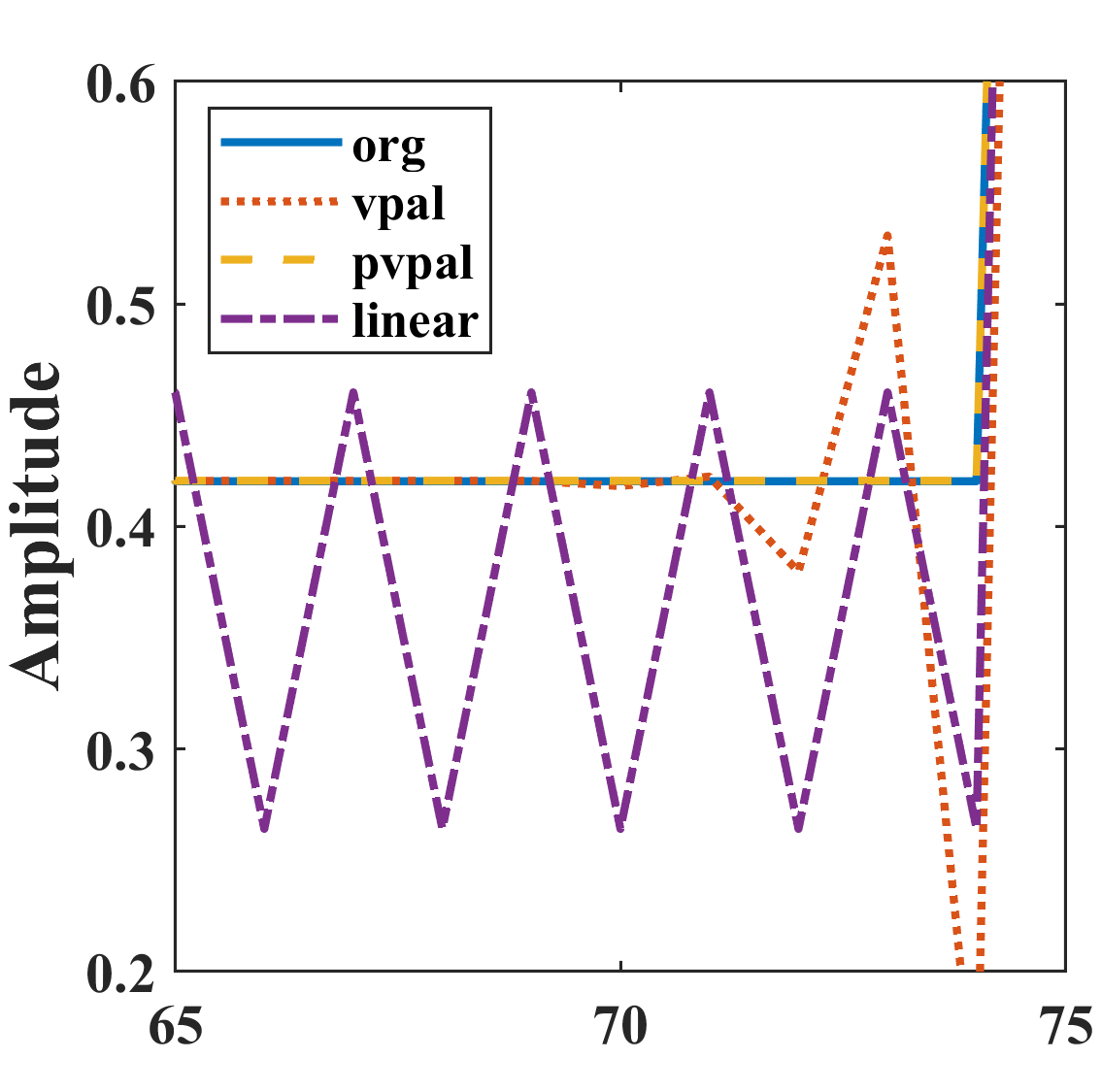}
    \caption{}
    \end{subfigure}
    \begin{subfigure}[b]{0.24\textwidth}
    \includegraphics[width=\textwidth]{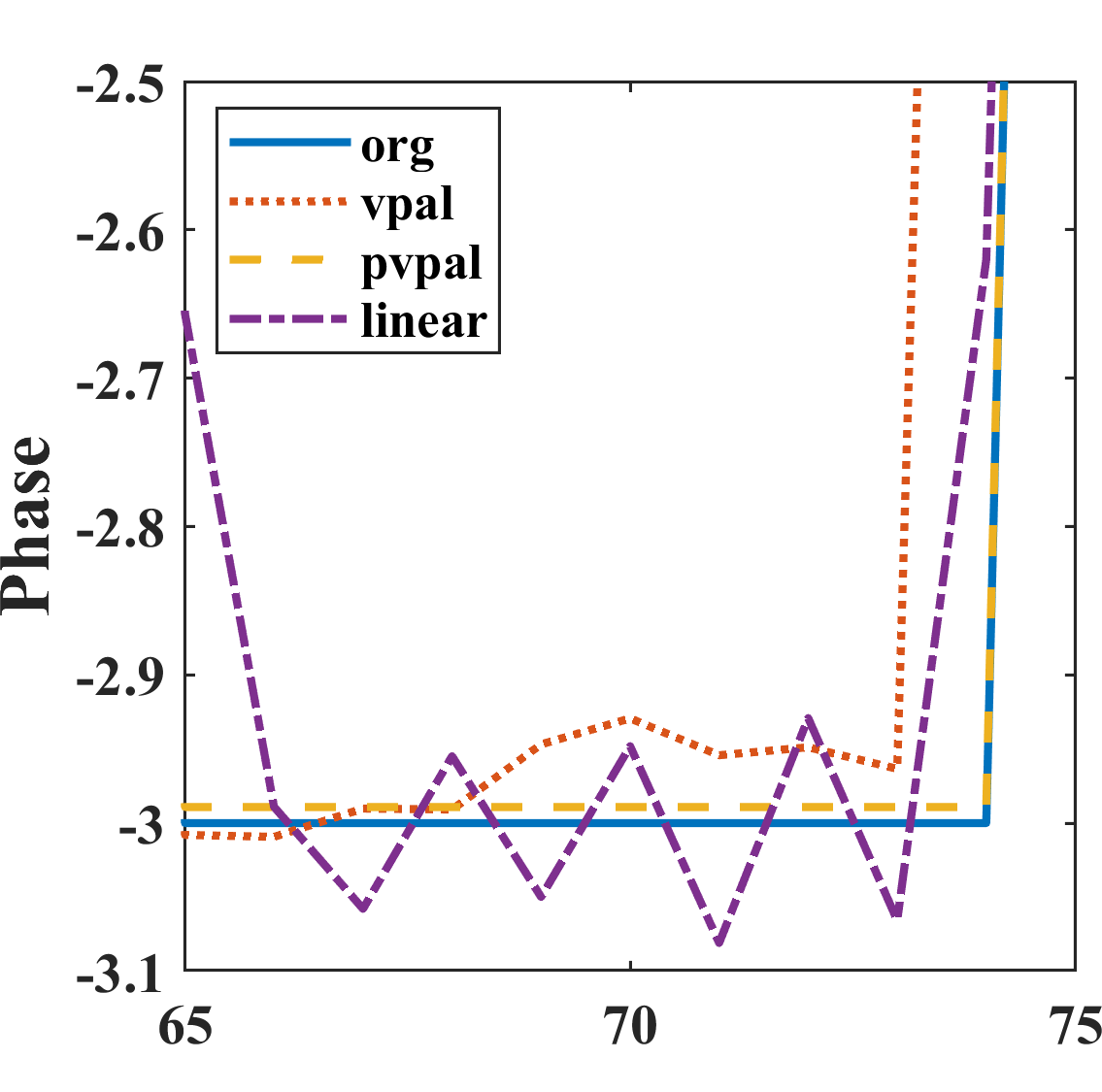}
    \caption{}
    \end{subfigure}
    \begin{subfigure}[b]{0.24\textwidth}
    \includegraphics[width=\textwidth]{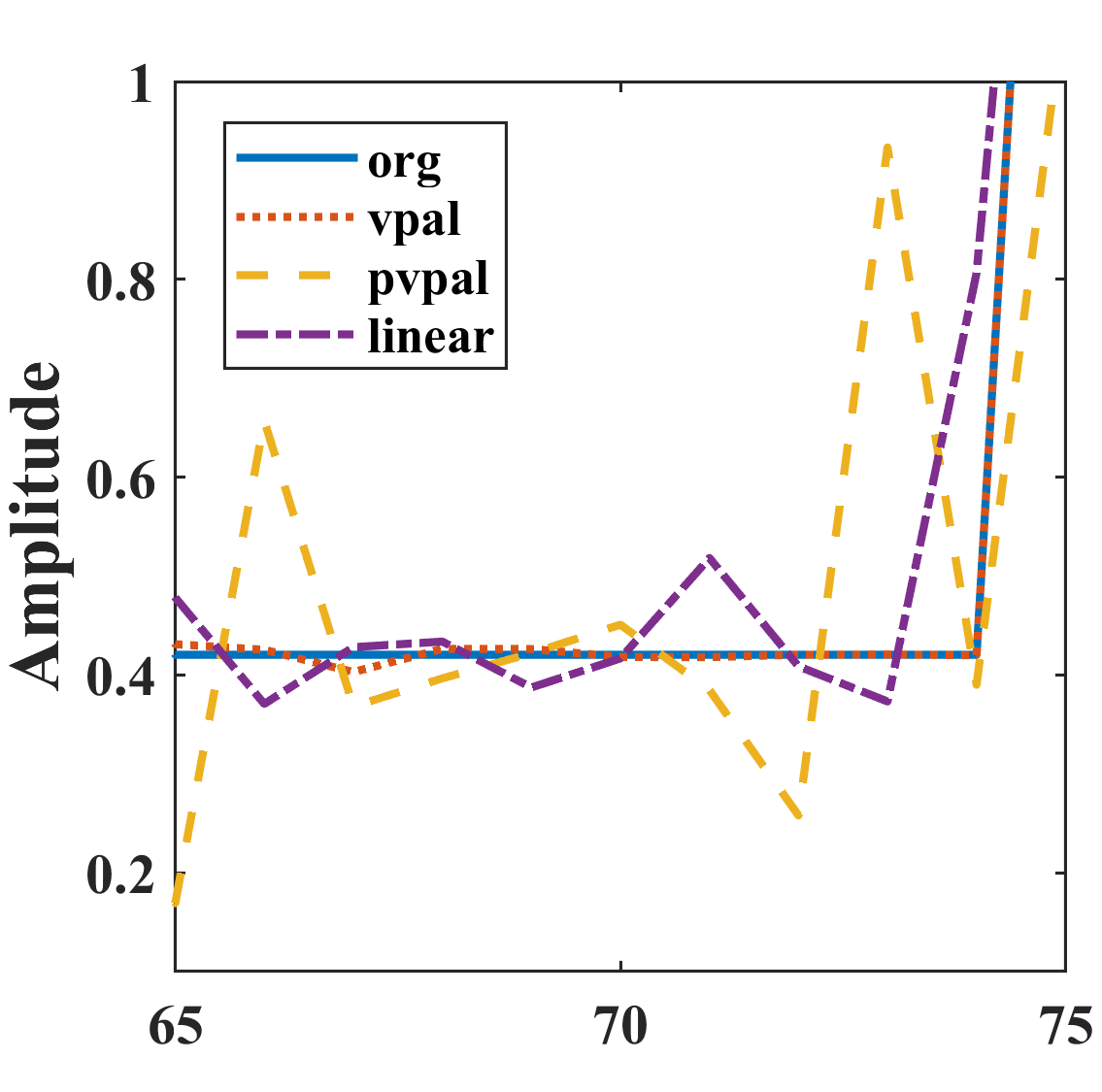}
    \caption{}
    \end{subfigure}
    \begin{subfigure}[b]{0.24\textwidth}
    \includegraphics[width=\textwidth]{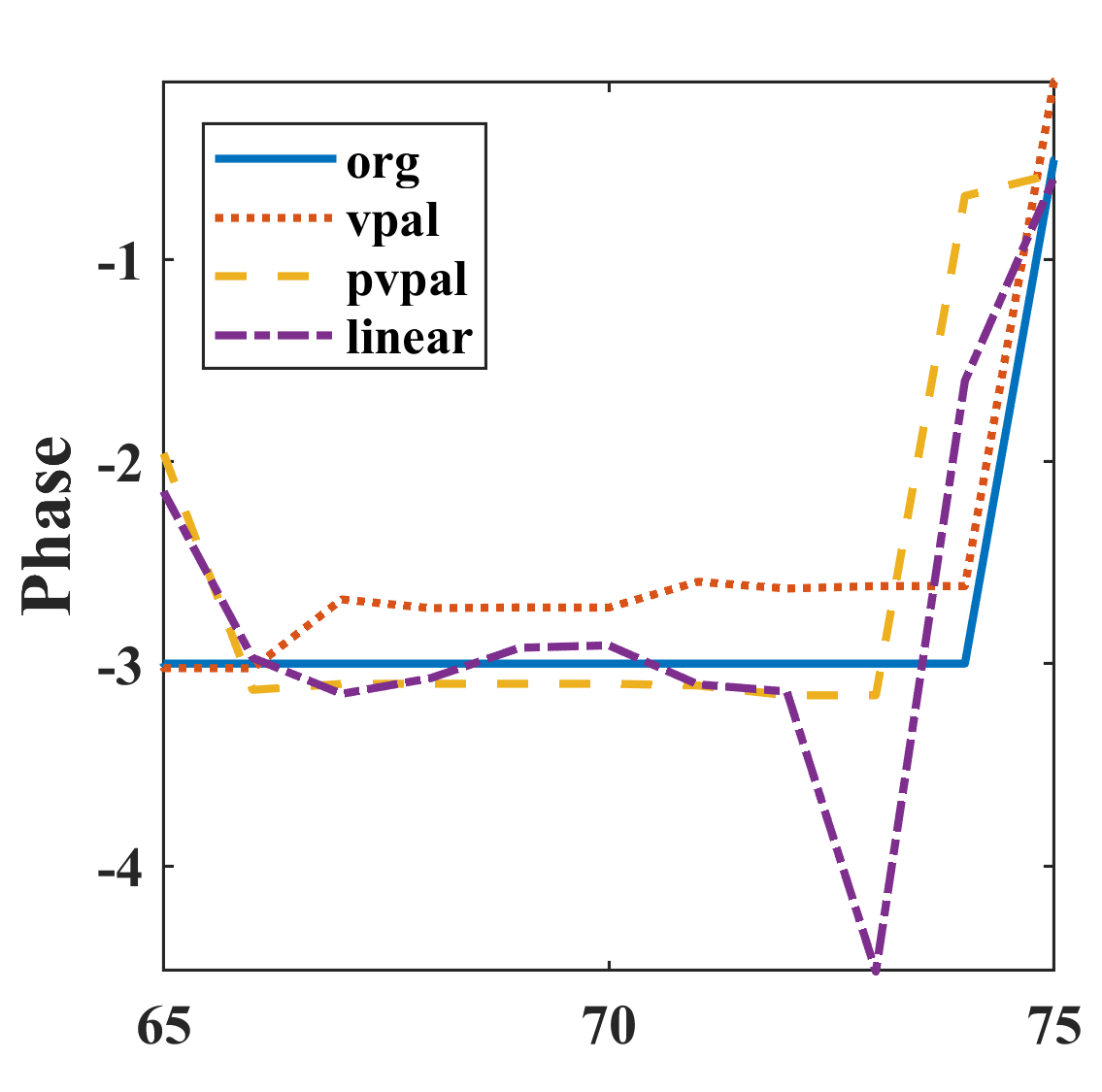}
    \caption{}
    \end{subfigure}
    \caption{Experiment 4: Exemplary reconstruction of amplitude (a,c) and phase (b,d) for STFT shift $s=2$ with exponential window (a,b) and Gaussian window (c,d). For better comparison (e,f,g,h) provide enlarged versions of the graph.}
    \label{fig:PR_example}
\end{figure}

Finally, we present in \Cref{fig:PR_stepsize} the distribution of chosen step sizes $\alpha$ from \Cref{alg:phase-step}. There was no significant difference between different STFT shifts $s$, hence we combined the results. When both axes are scaled logarithmically, the distribution follows a Gaussian-like curve centered around the $[0.001,0.01]$ bin. This motivates the choice of the default step size $\widetilde{\alpha}=0.001$ in \Cref{alg:phase-step}. Note that \texttt{pvpal} chooses much more step sizes within $[1,\infty)$ compared to \texttt{vpal}, which again indicates its much faster convergence rate.

\begin{figure}[htbp]
    \centering
    \includegraphics[width=0.8\textwidth]{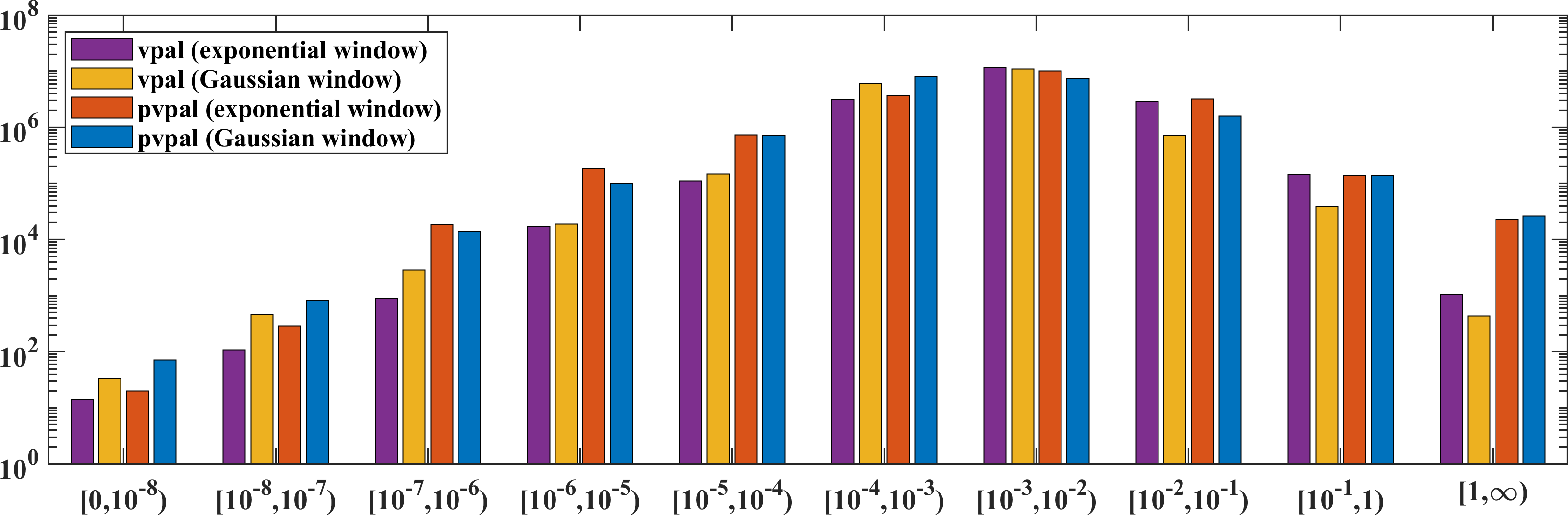}
    \caption{Experiment 5: Distribution of Step size $\alpha$ used by \texttt{vpal} and \texttt{pvpal} in different setups, both axes are scaled logarithmically.}
    \label{fig:PR_stepsize}
\end{figure}

\subsection{Experiment 5: Learned Inverse Problem for Contrast Agent Reduction}\label{sec:LIPCAR}
In this experiment, we test {\tt vpal} against a highly nonlinear forward operator in a real world medical application. The test case is the Learned Inverse Problem for Contrast Agent Reduction (LIP-CAR), introduced in \cite{bianchi2024lip,evangelista2025lip}. In several medical imaging diagnostic procedures, dosages of Contrast Agents (CAs) are administered to detect small or hard-to-see lesions. The Contrast Agent Reduction (CAR) problem refers to the task of lowering the CA dose while maintaining diagnostic accuracy (\Cref{fig:L2H}).

Recently, CAR was recast as a learned inverse problem. Given a noisy  low-dose image~$x^\delta_{\textnormal{low}}$, we fix two trained neural networks, $\Phi_{\operatorname{L2H}}$ and $\Phi_{\operatorname{H2L}}$, that map low-dose images to  high-dose images and vice versa, respectively. Then it was shown that the regularized problem
\begin{equation}\label{eq:LIP-CAR}
x_{\textnormal{high}} \in \operatorname*{argmin}_{x}
\quad
\tfrac{1}{2}\,\bigl\lVert \Phi_{\textnormal{H2L}}(x) \;-\; x^\delta_{\textnormal{low}}\bigr\rVert_{2}^{2}
\;+\; \alpha\,\mathcal{R}\bigl(x,\,\Phi_{\textnormal{L2H}}\bigr)
,
\end{equation}
is robust with respect to noise perturbations and can better preserve the contrasted regions in synthetic  high-dose predictions $x_{\textnormal{high}}$ than using a single network  $\Phi_{\operatorname{L2H}}$. The forward operator, here represented by the network $\Phi_{\operatorname{H2L}}$, is highly nonlinear. 

\begin{figure}[hbt!]
	\centering
	\includegraphics[width=0.9\textwidth]{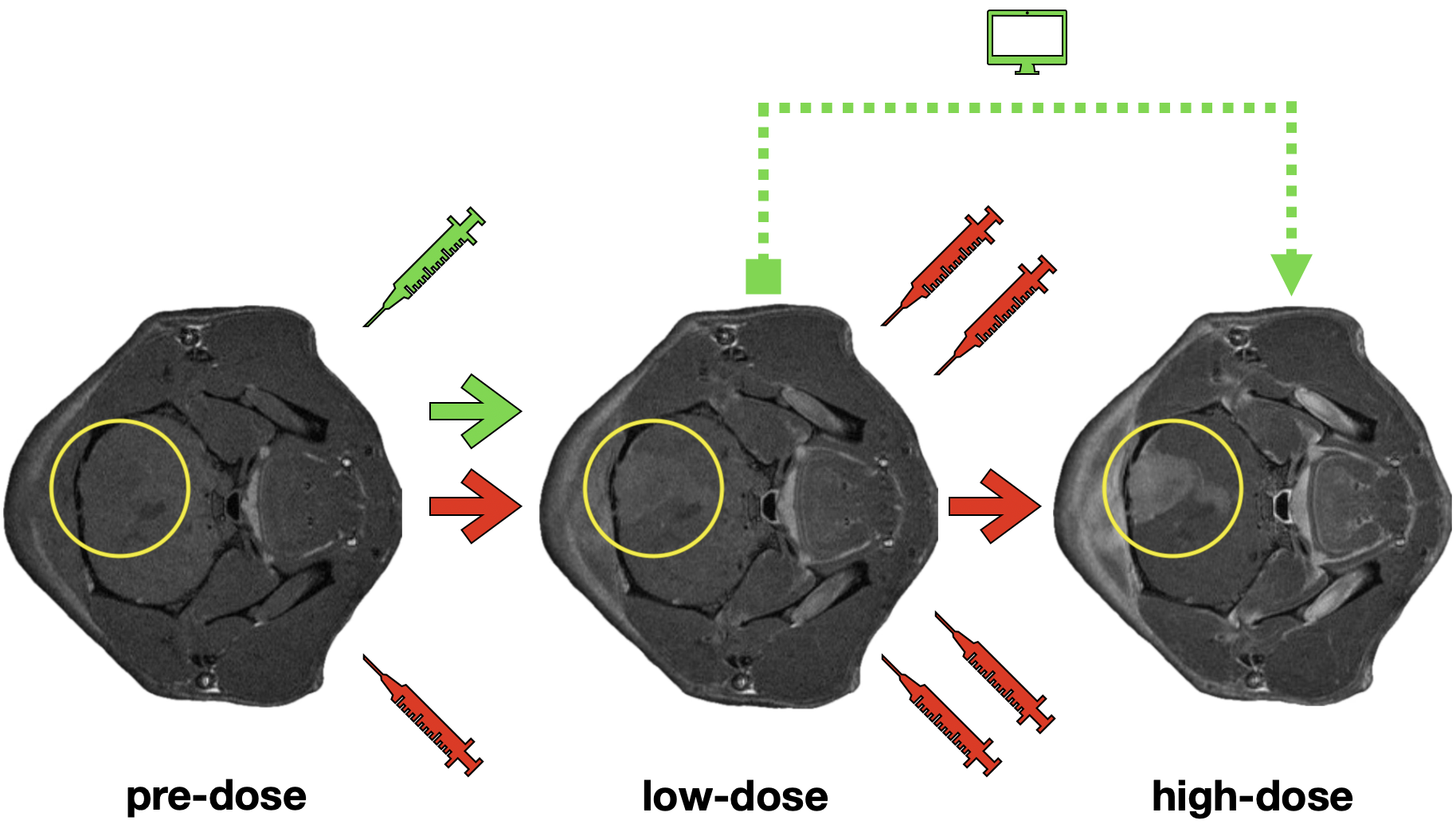}
	\caption{ Experiment 5: Visualization of the CAR model problem: MRI images obtained by administration of  gadolinium-based CA at 0\%, 20\%, and 100\% doses (left to right). The high-dose corresponds to the standard clinical protocol, clearly highlighting the lesion (yellow circle). The typical process (red path) involves injections from zero to high dose. The CAR goal (green path) is to inject only the low dose (20\%) and computationally simulate the high-dose (100\%) image.}\label{fig:L2H}
\end{figure}

In this experiment, both the networks share the same UNet-like architecture~\cite{ronneberger2015u} with residual connections. It is basically a multiscale convolutional neural network with the typical encoder-decoder structure, and  one-to-one sums as skip connections instead of concatenations. The nonlinearity is given by 2D maxpooling operators~(4) and ReLU activation functions~(23). The total number of trainable parameters is in the range of 28 millions. 

To increase stability and accuracy, we use the pre-dose images $x^\delta_{\textnormal{pre}}$ too. Therefore, both the networks take two inputs, pre-dose and low-dose grayscale 256$\times$256 images stacked together along the channel dimension, and output one grayscale 256$\times$256 image. 

The dataset comprises 61 cranial MRI examinations from a pre-clinical trial on lab rats with induced~C6 glioma. Procedures were conducted according to the national and international laws on experimental animal research (L.D. 26/2014; Directive 2010/63/EU) and under a specific Italian Ministerial Authorization (project research number 215/2020-PR), by CRB/Test Facility of Bracco Imaging S.p.A. Each session includes three T1-weighted spin-echo sequences (TR/TE = 360/5.51 ms, FOV = 32×32 mm, matrix = 256×256, 24 slices of 0.75 mm thickness): pre-contrast, low-dose (0.01 mmol Gd/kg), and full-dose (0.05 mmol Gd/kg) images.

The dataset is first pre-processed using  nonlocal means filter~\cite{wiest2008rician,coupe2008optimized} to mitigate Rician noise, and then split into a training set comprising 42 acquisitions (1,008 images in total) and a disjoint test set of 4 acquisitions (96 images in total).

The networks are trained for 100 epochs using a batch size of $16$, employing the ADAM optimizer with a fixed step size of $10^{-3}$. Training minimizes the mean of an SSIM-based loss function~\cite{wang2004image} across the set of trainable parameters. On the test set, $\Phi_{\textnormal{L2H}}$ achieves an average SSIM of 0.954 (std: 0.057), whereas $\Phi_{\textnormal{H2L}}$ achieves an average SSIM of 0.988 (std: 0.006). 

To leverage GPU computation and the capabilities of the PyTorch library, the original \texttt{vpal} algorithm is adapted to operate with torch tensors. This allows a more fair comparison with very popular and pre-installed solvers such as ADAM, SGD, and RMSprop. See \cite{ruder2016overview} for a general overview of such methods.  

Once the neural networks $\Phi_{\textnormal{L2H}}$ and $\Phi_{\textnormal{H2L}}$ are trained as discussed above, all their weights are frozen. The optimization of the functional~\eqref{eq:LIP-CAR} is then performed over the space of high-dose inputs $x$. We leverage the high-dose prediction $\Phi_{\textnormal{L2H}}(x^\delta_{\textnormal{pre}}, x^\delta_{\textnormal{low}})$ as a prior image in the regularization term $\mathcal{R}$. However, to avoid excessive overfitting to this potentially unstable prior, we structure the optimization into two distinct phases. In \texttt{Phase 1}, the iterative solver begins with the noisy observed low-dose image $x^\delta_{\textnormal{low}}$, utilizing the prior image in the regularizer until the stopping criteria are satisfied. Subsequently, in \texttt{Phase 2}, the solver restarts from the output of \texttt{Phase 1}, but this time without employing the prior image in the regularization term, continuing until stopping criteria are met. Refer to \Cref{alg:LIP-CAR} for further details.

\begin{algorithm}[H]
\caption{LIP-CAR}
\label{alg:LIP-CAR}
\begin{algorithmic}[1] \small
\Require
    Noisy pre-dose and low-dose images $x^\delta_{\textnormal{pre}}$, $x^\delta_{\textnormal{low}}$; Trained networks $\Phi_{\textnormal{L2H}}$, $\Phi_{\textnormal{H2L}}$; Regularizer~$\mathcal{R}(\cdot)$;
    Regularization parameters $\alpha_1, \alpha_2$; Max iterations $k_{\textnormal{max}}$; Tolerance $\text{tol}$;
    Iterative solver~$\texttt{solver}(\cdot)$.
\Ensure
    Reconstructed high-dose image $x_{\textnormal{high}}$.

\Statex
\Procedure{ShouldStop}{$k, x_{k+1}, x_k, f_k, f_{k-1}$}
    \If{$k=0$} \textbf{return} $k \ge k_{\textnormal{max}}-1$ \EndIf
    \State $C_1 \gets |f_k - f_{k-1}| \le \text{tol}(1+f_k)$ \Comment{Loss check}
    \State \textbf{return} $(C_1 \text{ \textbf{and} } C_2) \text{ \textbf{or} } C_3$
\EndProcedure

\Statex
\Statex \texttt{// Phase 1: Optimization with Prior Image}
\State Compute prior image: $p \gets \Phi_{\textnormal{L2H}}(x^\delta_{\textnormal{pre}}, x^\delta_{\textnormal{low}})$.
\State Initialize: $x_0 \gets x^\delta_{\textnormal{low}}$, $k \gets 0$.
\State Let $f^{(1)}(x) = \left( \frac{1}{2}\norm{\Phi_{\textnormal{H2L}}(x^\delta_{\textnormal{pre}}, x) - x^\delta_{\textnormal{low}}}_2^2 + \alpha_1 \mathcal{R}(x, p) \right) / \norm{x^\delta_{\textnormal{low}}}_2^2$.
\Loop
    \State $x_{k+1} \gets \texttt{solver}(x_k, \alpha_1, p)$ \Comment{Apply one step of the solver for \Cref{eq:LIP-CAR}}
    \If{\Call{ShouldStop}{$k, x_{k+1}, x_k, f^{(1)}(x_{k+1}), f^{(1)}(x_k)$}}
        \State \textbf{break}
    \EndIf
    \State $k \gets k+1$
\EndLoop
\State Set result of Phase 1: $x_{\textnormal{phase1}} \gets x_{k+1}$.

\Statex \texttt{// Phase 2: Refinement without Prior Image}
\State Initialize: $x_0 \gets x_{\textnormal{phase1}}$, $k \gets 0$.
\State Let $f^{(2)}(x) = \left( \frac{1}{2}\norm{\Phi_{\textnormal{H2L}}(x^\delta_{\textnormal{pre}}, x) - x^\delta_{\textnormal{low}}}_2^2 + \alpha_2 \mathcal{R}(x) \right) / \norm{x^\delta_{\textnormal{low}}}_2^2$.
\Loop
    \State $x_{k+1} \gets \texttt{solver}(x_k, \alpha_2)$ \Comment{Apply one step of the solver for \Cref{eq:LIP-CAR}}
    \If{\Call{ShouldStop}{$k, x_{k+1}, x_k, f^{(2)}(x_{k+1}), f^{(2)}(x_k)$}}
        \State \textbf{break}
    \EndIf
    \State $k \gets k+1$
\EndLoop
\State Set final output: $x_{\textnormal{high}} \gets x_{k+1}$.

\State \Return $x_{\textnormal{high}}$.
\end{algorithmic}
\end{algorithm}
We fix the pre-processed dataset $\{(x_{\textnormal{pre}}, x_{\textnormal{low}}, x_{\textnormal{high}})\}$ as a surrogate for the true ground truth, which serves as our noiseless baseline. Both pre-dose and low-dose images from the test set are then perturbed with Rician distributed  noise $\eta$ of intensity $\delta>0$:
\begin{equation*}
x_{\textnormal{pre}}^\delta = x_{\textnormal{pre}} + \delta\eta\frac{\norm{x_{\textnormal{pre}}}_2}{\norm{\eta}_2}, \qquad x_{\textnormal{low}}^\delta = x_{\textnormal{low}} + \delta\eta\frac{\norm{x_{\textnormal{low}}}_2}{\norm{\eta}_2}.
\end{equation*}
As regularizer, we use an $\ell^1$-TV term. In the notation of \Cref{eq:genLasso},
$$
\mathcal{R}(x,p)= \norm{D(x-p)}_1 \quad \mbox{with} \quad D = \begin{bmatrix} \nabla_h \\ \nabla_v \end{bmatrix}.  
$$
We evaluate \Cref{alg:LIP-CAR} using four different solvers: \texttt{vpal}, ADAM, SGD, and RMSprop. The hyperparameters for these solvers are manually tuned  based on overall observed best performance and kept fixed throughout, with a maximum iterations $k_{\textnormal{max}} = 500$ and a stopping criteria tolerance $\text{tol}=10^{-5}$ for all methods. The learning rate is set to $10^{-3}$ for ADAM, SGD, and RMSprop, while for \texttt{vpal} we use a linearized step size with the augmented Lagrangian penalty parameter $\lambda$ fixed at 1.

The numerical experiments presented below were conducted on an Apple M4 Max equipped with 48 GB of memory. All computations were performed using torch tensors, utilizing the Metal Performance Shaders (MPS) backend to fully leverage GPU acceleration capabilities available on Apple Silicon devices.

In \Cref{tab:ADAMvsVPAL}, we compare the performances of the solvers for \Cref{alg:LIP-CAR} (LIP-CAR) across various noise intensity levels $\delta$, using a fixed reference triplet of images $(x^\delta_{\textnormal{pre}}, x^\delta_{\textnormal{low}}, x_{\textnormal{high}})$. All the considered solvers  outperform the neural network $\Phi_{\textnormal{L2H}}$. Most importantly, \texttt{vpal} is consistently the fastest solver and  achieves results comparable to the best-performing solver. See \Cref{fig:LIP-CAR_comparison}-\Cref{fig:LIP-CAR_error_maps} for  visual comparisons.

\begin{table}[h!]
    \centering \small
    \begin{tabular}{@{} l l l c c c @{}}
        \toprule
        \textbf{Noise Level} & \textbf{Reg.\ Params} & \textbf{Method} & \textbf{Time(s)} & \textbf{RRE} & \textbf{SSIM}  \\
        \textbf{($\delta$)} & \textbf{($\alpha_1, \alpha_2$)} & & & &   \\
        \midrule

        \multirow{5}{*}{0.1} & \multirow{5}{*}{(0.03, 0.01)} 
            & $\Phi_{\operatorname{L2H}}$        & –     & 0.429 & 0.858  \\ 
            &                                      & LIP-CAR (ADAM)                      & 12.47 & 0.208 & 0.911 \\ 
            &                                      & LIP-CAR (SGD)                       & 10.68 & 0.235 & 0.880 \\ 
            &                                      & LIP-CAR (RMSprop)                   &  9.63 & 0.205 & 0.912 \\ 
            &                                      & LIP-CAR (\texttt{vpal})             & \textbf{6.98} & 0.207 & 0.906  \\ 
        \midrule

        \multirow{5}{*}{0.2} & \multirow{5}{*}{(0.05, 0.02)} 
            & $\Phi_{\operatorname{L2H}}$        & –     & 0.446 & 0.766  \\ 
            &                                      & LIP-CAR (ADAM)                      & 15.60 & 0.197 & 0.884 \\ 
            &                                      & LIP-CAR (SGD)                       & 28.30 & 0.229 & 0.852 \\ 
            &                                      & LIP-CAR (RMSprop)                   & 12.18 & 0.198 & 0.884 \\ 
            &                                      & LIP-CAR (\texttt{vpal})             & \textbf{9.62} & 0.196 & 0.878   \\ 
        \midrule

        \multirow{5}{*}{0.3} & \multirow{5}{*}{(0.07, 0.03)} 
            & $\Phi_{\operatorname{L2H}}$        & –     & 0.444 & 0.699  \\ 
            &                                      & LIP-CAR (ADAM)                      & 18.01 & 0.207 & 0.858  \\ 
            &                                      & LIP-CAR (SGD)                       & 29.68 & 0.238 & 0.805 \\ 
            &                                      & LIP-CAR (RMSprop)                   & 12.05 & 0.212 & 0.858 \\ 
            &                                      & LIP-CAR (\texttt{vpal})             & \textbf{11.38} & 0.199 & 0.855  \\ 
        \bottomrule
    \end{tabular}
    \caption{Experiment 5: Metrics comparison. We compare $\Phi_{\operatorname{L2H}}$ and LIP-CAR with different solvers (ADAM. SGD, RMSprop and \texttt{vpal}) across different noise levels $\delta$. The metrics are computed between the ground truth high-dose image and the reconstructed $x_{\textnormal{high}}$.} 
    \label{tab:ADAMvsVPAL} 
\end{table}

\begin{figure}[htbp]
    \centering
    \begin{subfigure}[b]{0.32\textwidth}
        \includegraphics[width=\textwidth]{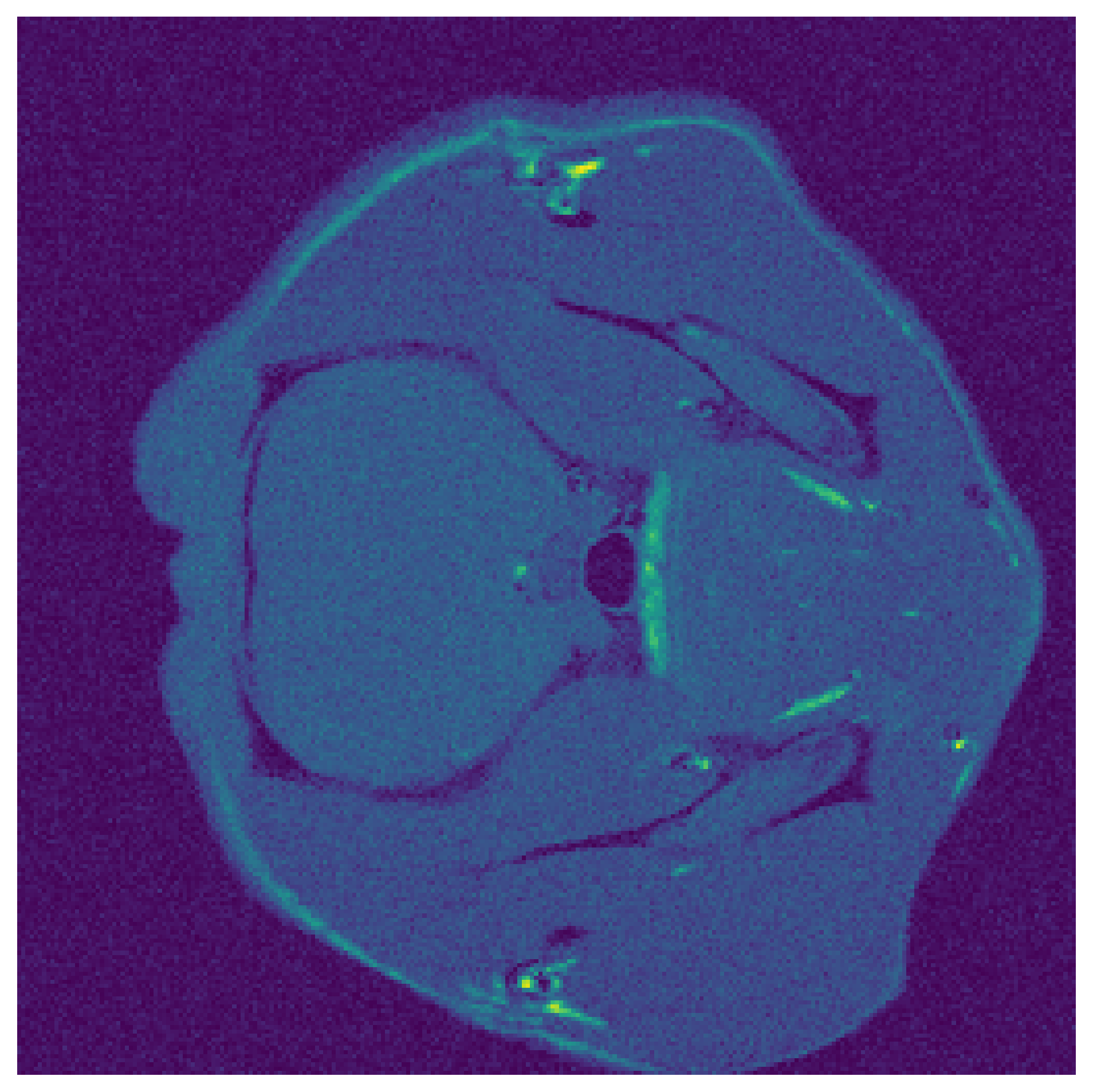}
        \caption{noisy pre-dose}
    \end{subfigure}
    \hfill
    \begin{subfigure}[b]{0.32\textwidth}
        \includegraphics[width=\textwidth]{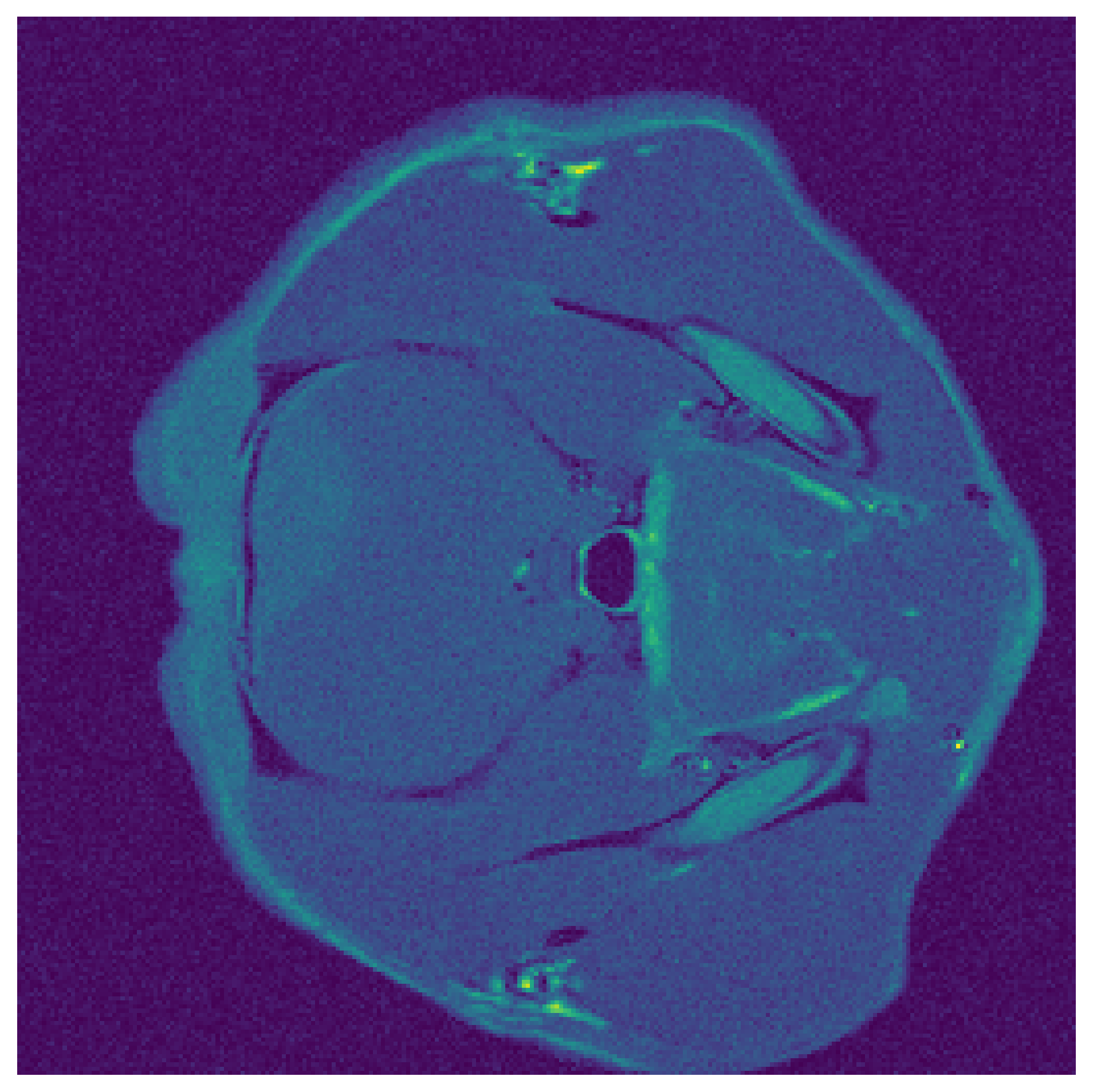}
        \caption{noisy low-dose}
    \end{subfigure}
    \hfill
    \begin{subfigure}[b]{0.32\textwidth}
        \includegraphics[width=\textwidth]{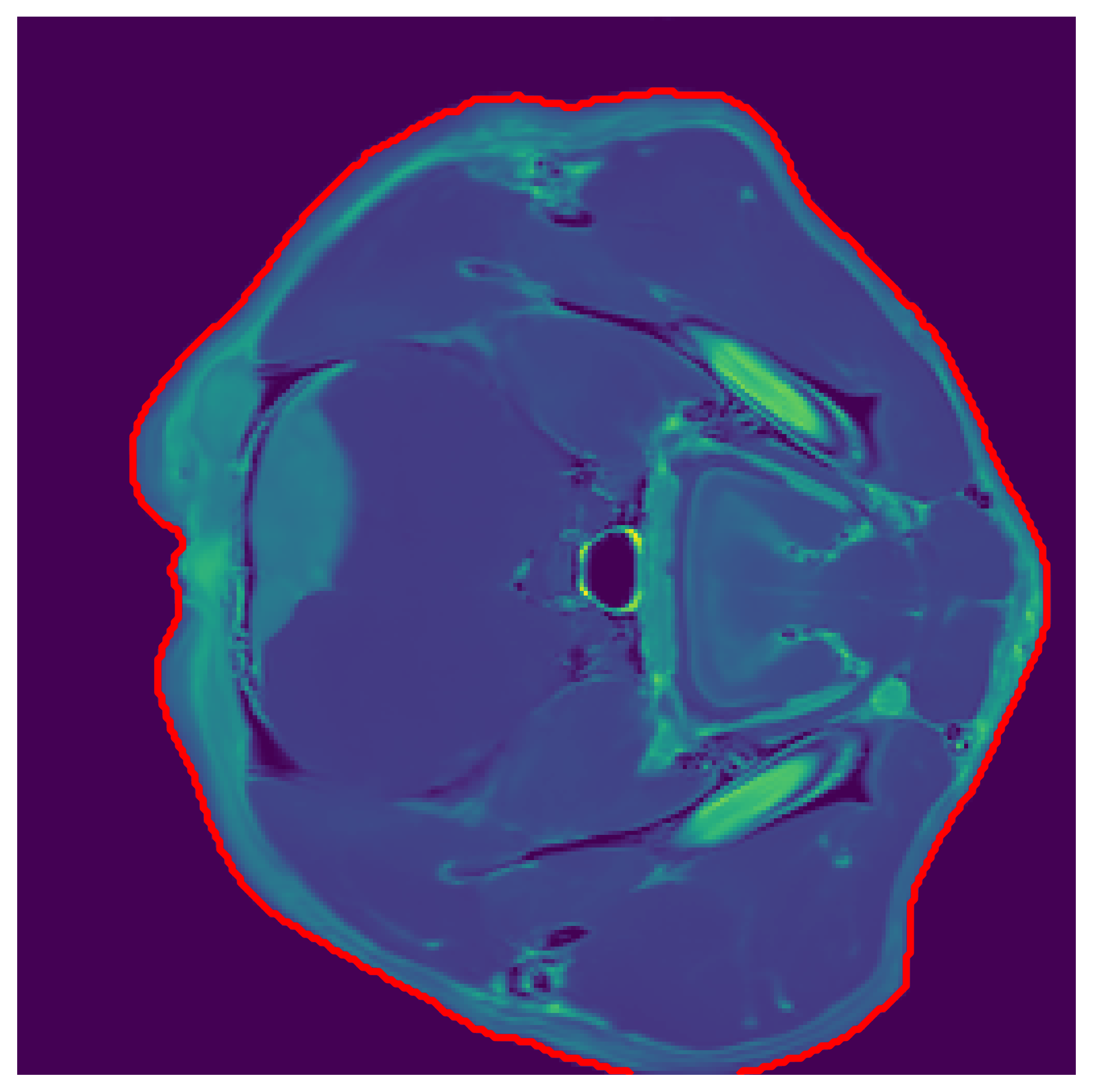}
        \caption{high-dose ground truth}
    \end{subfigure}\\

    \begin{subfigure}[b]{0.32\textwidth}
        \includegraphics[width=\textwidth]{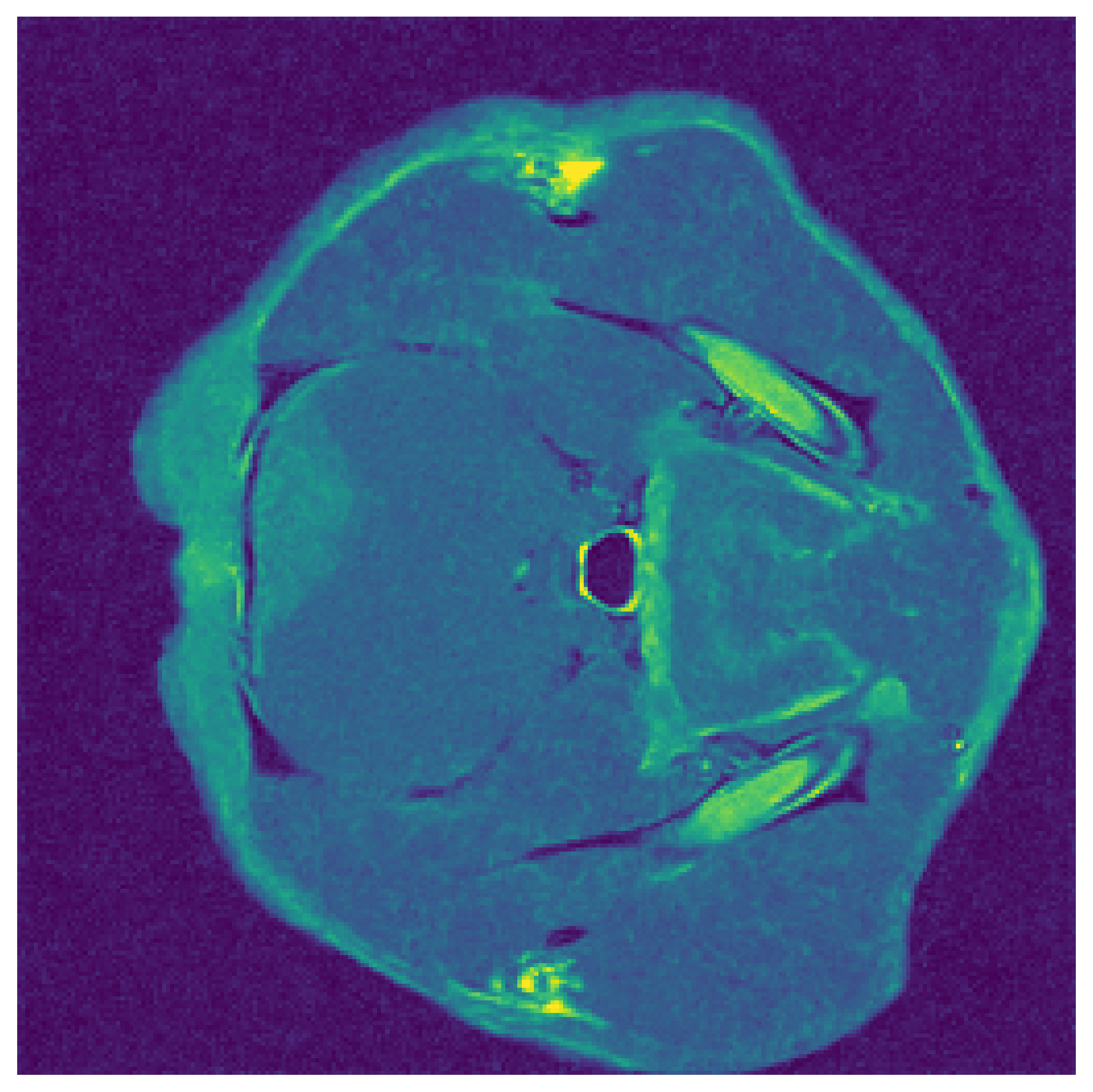}
        \caption{$\Phi_{\operatorname{L2H}}$}
    \end{subfigure}
    \hfill
    \begin{subfigure}[b]{0.32\textwidth}
        \includegraphics[width=\textwidth]{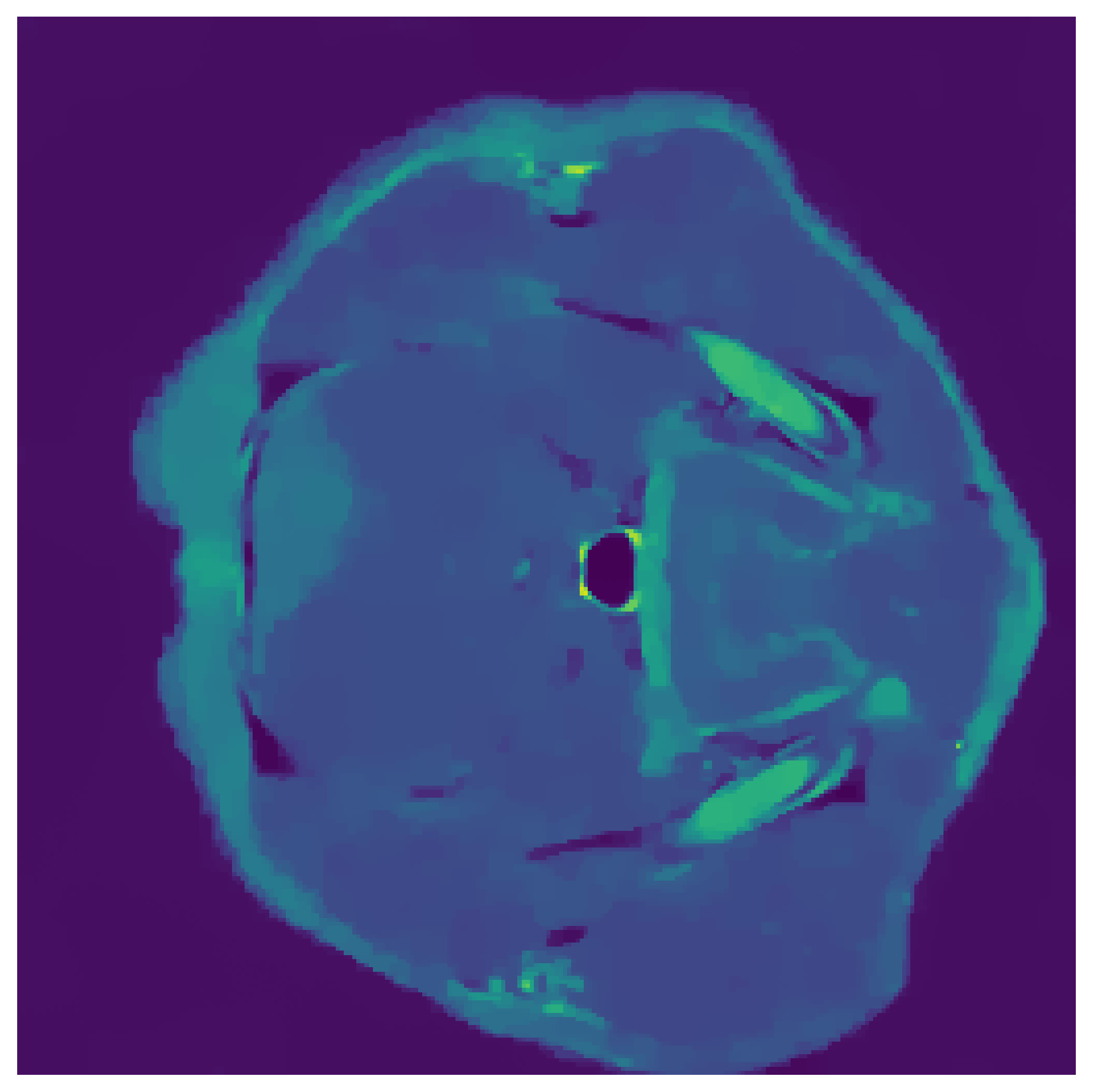}
        \caption{LIP-CAR (ADAM) }
    \end{subfigure}
    \hfill
    \begin{subfigure}[b]{0.32\textwidth}
        \includegraphics[width=\textwidth]{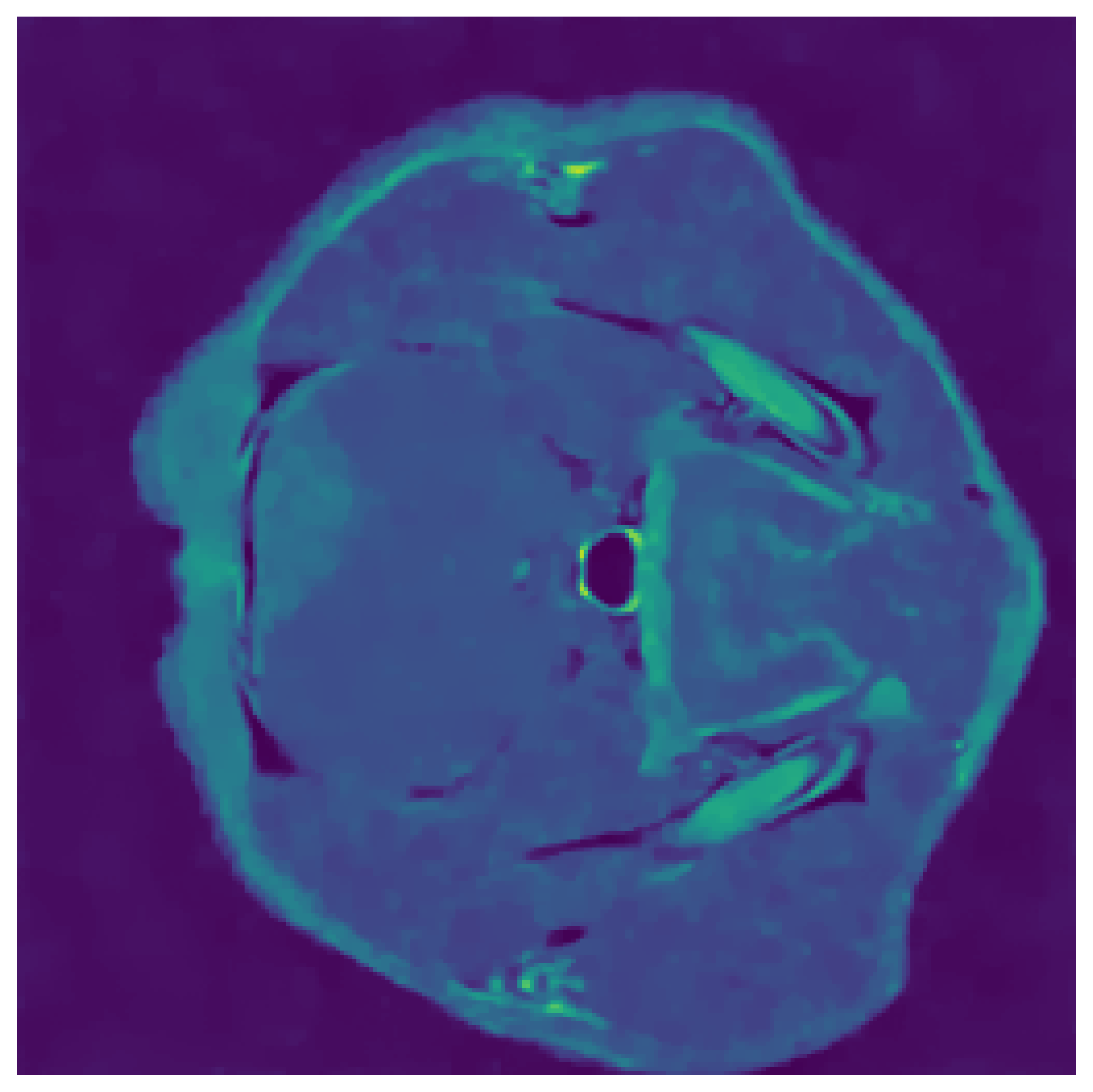}
        \caption{LIP-CAR (\texttt{vpal})}
    \end{subfigure}
    \caption{Experiment 5:  Reconstruction comparisons corresponding to the case $\delta = 0.2$ in \Cref{tab:ADAMvsVPAL}.
Top row (from left to right): (a) Observed noisy pre-dose $x^\delta_{\textnormal{pre}}$, (b) observed noisy low-dose $x^\delta_{\textnormal{low}}$, and (c) high-dose ground truth. The red contour in (c) identifies the Region of Interest used for computing all comparison metrics.
Bottom row (from left to right): (d) High-dose prediction by the neural network $\Phi_{\operatorname{L2H}}(x^\delta_{\textnormal{pre}}, x^\delta_{\textnormal{low}})$, (e) reconstructed high-dose $x_{\textnormal{high}}$ obtained by \Cref{alg:LIP-CAR} with the ADAM solver, and (f) reconstructed high-dose obtained by \Cref{alg:LIP-CAR} with the \texttt{vpal} solver.}
    \label{fig:LIP-CAR_comparison}
\end{figure}

\Cref{tab:vpal_speed_summary} and \Cref{fig:all_optimizers_comparison} present instead the results obtained by running \Cref{alg:LIP-CAR} across all 96 test cases at a fixed noise intensity level $\delta=0.2$ using the considered solvers. Again, all the solvers outperform the neural network reconstructions, achieving comparable results among themselves, with \texttt{vpal} consistently emerging as the fastest solver.

\begin{figure}[htbp]
    \centering
    \begin{subfigure}[b]{0.32\textwidth}
        \includegraphics[width=\textwidth]{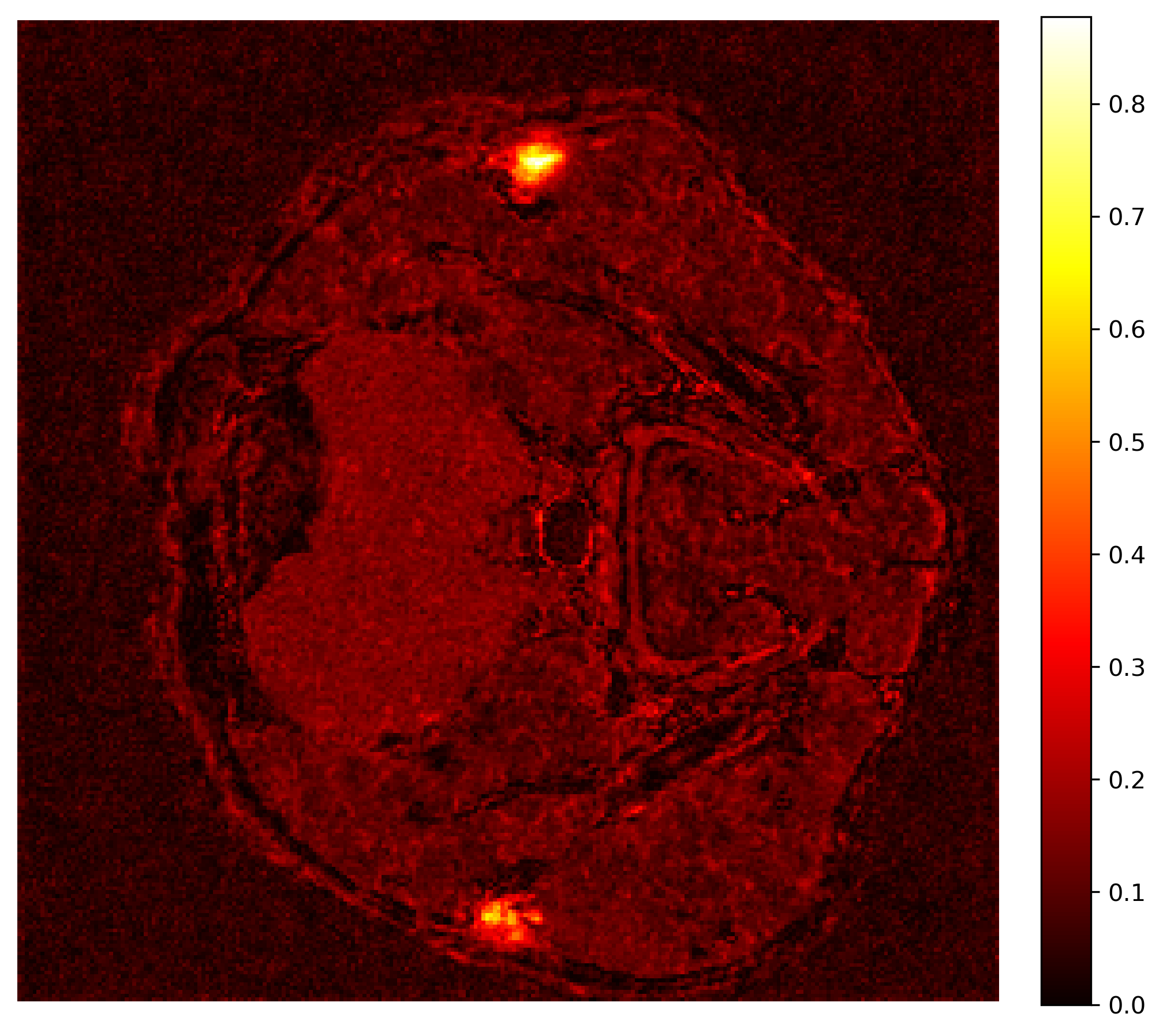}
        \caption{$\Phi_{\operatorname{L2H}}$ error map (max 0.877)}
    \end{subfigure}
    \hfill
    \begin{subfigure}[b]{0.32\textwidth}
        \includegraphics[width=\textwidth]{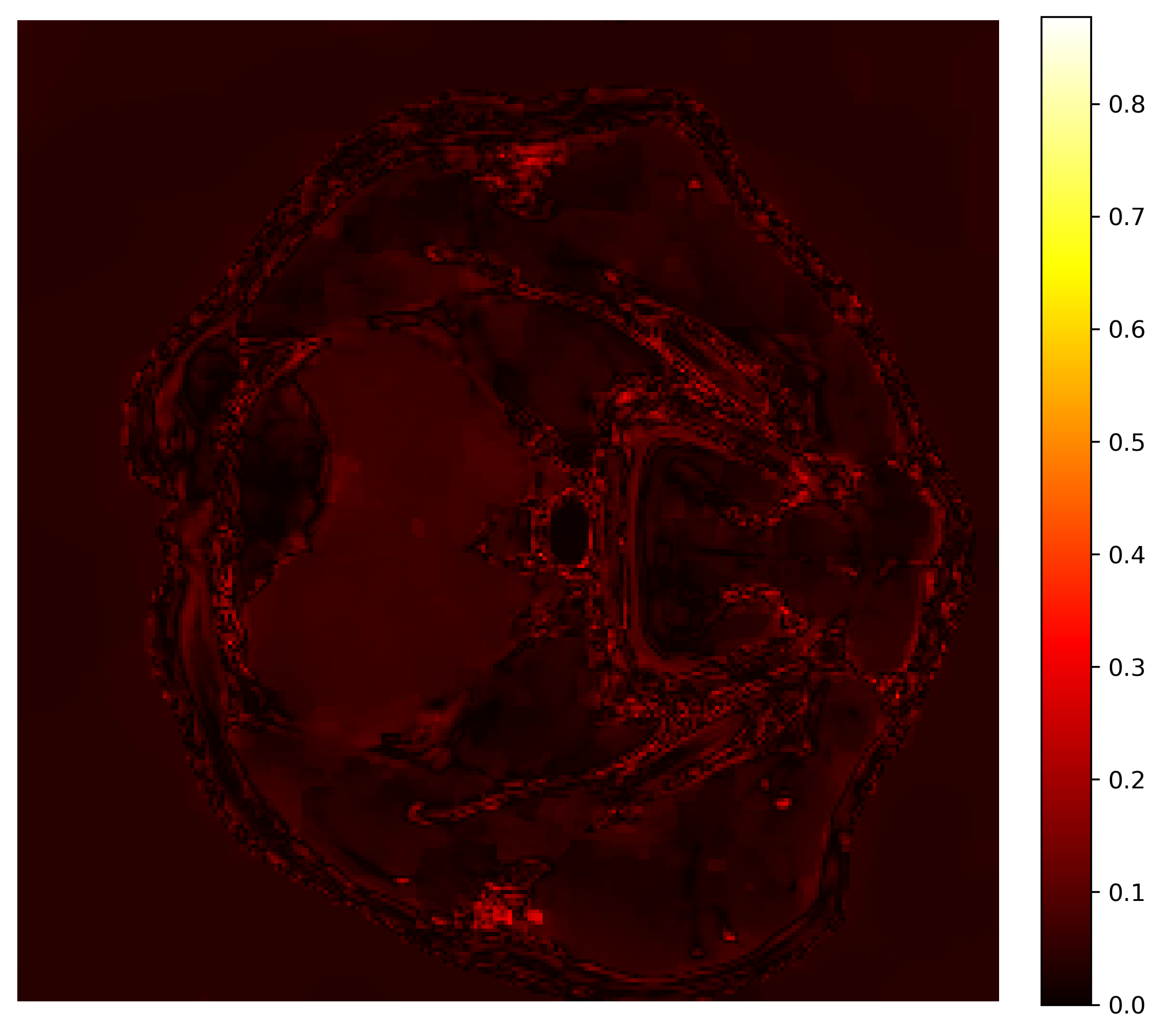}
        \caption{ADAM error map (max 0.358)}
    \end{subfigure}
    \hfill
    \begin{subfigure}[b]{0.32\textwidth}
        \includegraphics[width=\textwidth]{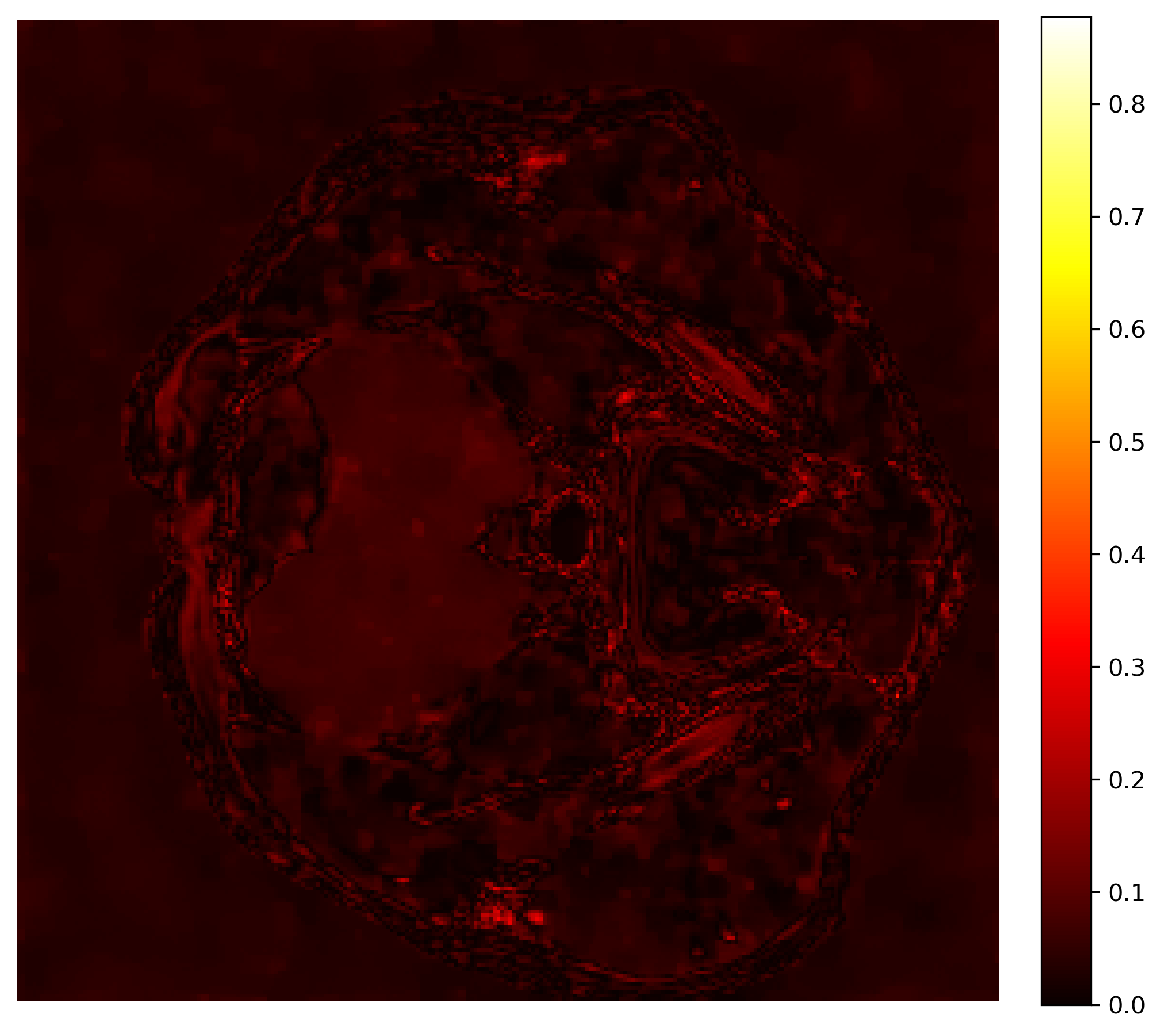}
        \caption{\texttt{vpal} error map (max 0.350)}
    \end{subfigure}

    \vspace{1em}

    \begin{subfigure}[b]{\textwidth}
        \centering
        \includegraphics[width=\textwidth]{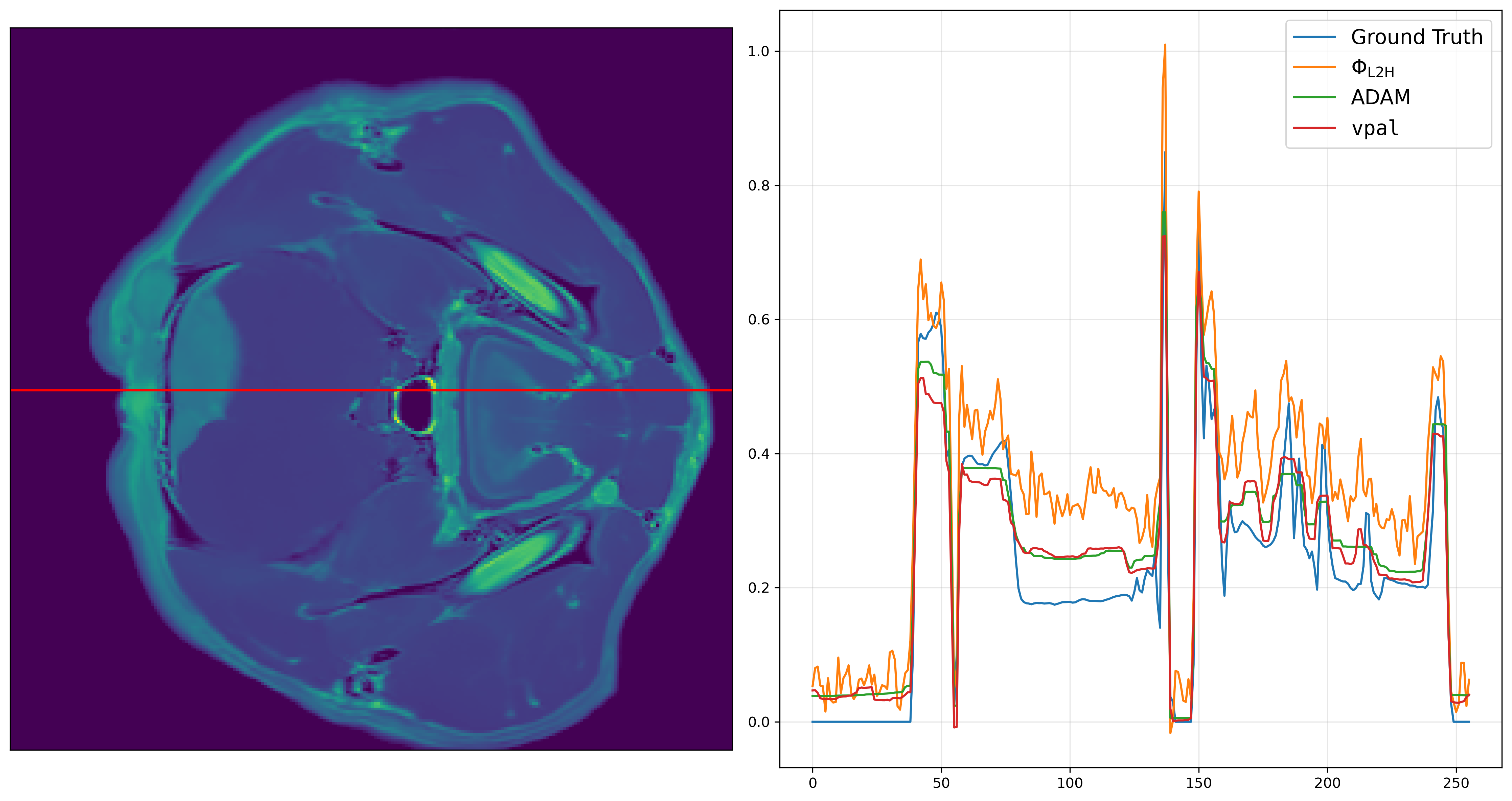}
        \caption{Signal profiles along the central horizontal line.}
    \end{subfigure}

    \caption{Experiment 5: Error maps and profile comparisons corresponding to the case $\delta = 0.2$ in \Cref{tab:ADAMvsVPAL}.
Top row: Error maps comparing (a) high-dose prediction by the neural network $\Phi_{\operatorname{L2H}}(x^\delta_{\textnormal{pre}}, x^\delta_{\textnormal{low}})$, (b) high-dose reconstruction obtained by \Cref{alg:LIP-CAR} with the ADAM solver, and (c) high-dose reconstruction obtained by \Cref{alg:LIP-CAR} with the \texttt{vpal} solver.
Bottom row: (d) Comparison of intensity profiles (right image) along the centered horizontal line (red line, left image) between the high-dose ground truth and the reconstructed high-dose images.}
    \label{fig:LIP-CAR_error_maps}
\end{figure}

\begin{table}[h!]
    \centering
    \begin{tabular}{l c c c}
        \hline
        \textbf{Solver} & \textbf{Solver faster} & \textbf{vpal faster} & \textbf{Avg. Speedup} \\
        \hline
        ADAM     & 2 (2.1\%)   & 94 (97.9\%) & 1.43$\times$ \\
        SGD      & 1 (1.1\%)   & 95 (98.9\%) & 2.63$\times$ \\
        RMSprop  & 28 (29.2\%) & 68 (70.8\%) & 1.10$\times$ \\
        \hline
    \end{tabular}
    \caption{Experiment 5: Summary of speed comparisons of \Cref{alg:LIP-CAR} 
   using \texttt{vpal} and other solvers  (96 total cases each). For all the cases we added random Rician noise of intensity $\delta=0.2$ to the inputs $x_{\textnormal{pre}}$ and $x_{\textnormal{low}}$, and we fixed the regularization parameters to $\alpha_1 = 0.05$, $\alpha_2 = 0.02$.}
    \label{tab:vpal_speed_summary}
\end{table}

\begin{figure}[htbp]
    \centering
    \begin{subfigure}[b]{0.45\textwidth}
        \includegraphics[width=\textwidth]{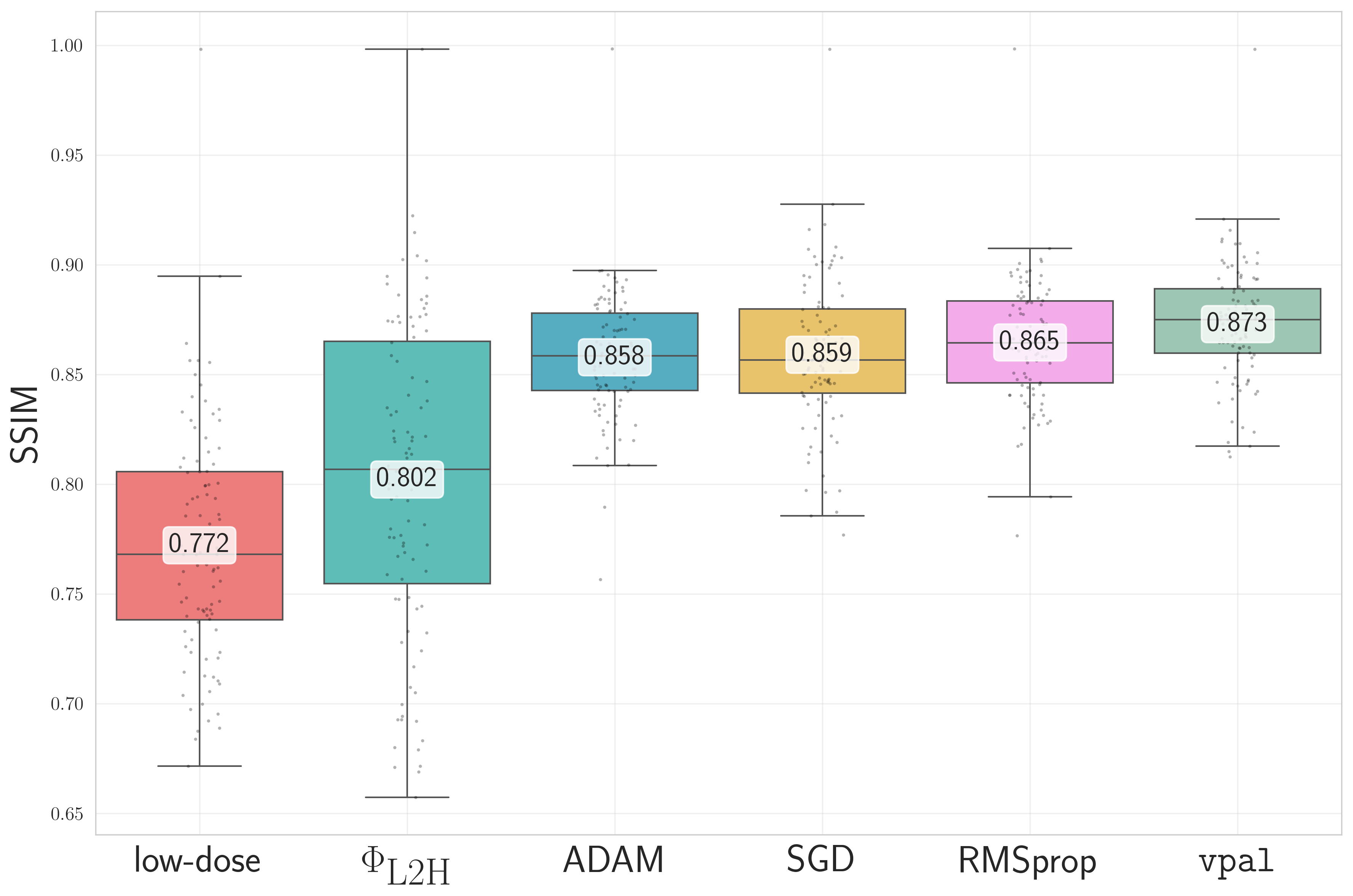}
        \caption{SSIM boxplots comparison}
    \end{subfigure}
    \begin{subfigure}[b]{0.45\textwidth}
        \includegraphics[width=\textwidth]{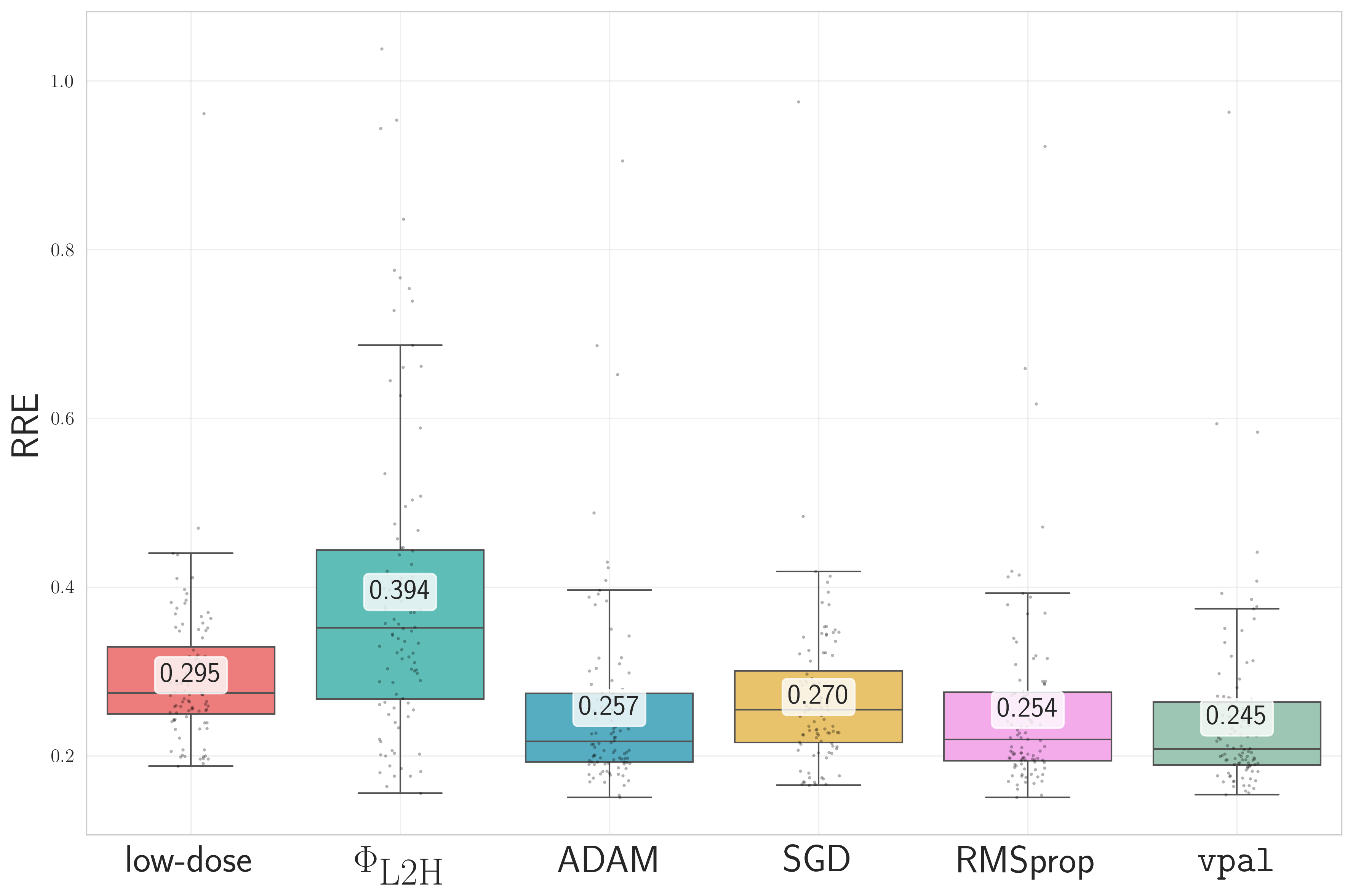}
        \caption{RRE boxplots comparison}
    \end{subfigure}
    \caption{Experiment 5: Boxplots of SSIM (a) and RRE (b) of \Cref{alg:LIP-CAR} using \texttt{vpal} and other solvers  (96 total cases each). For all the cases we added random Rician noise of intensity $\delta=0.2$ to the inputs 
$x_{\textnormal{pre}}$ and $x_{\textnormal{low}}$, and we fixed the regularization parameters to $\alpha_1 = 0.05$, $\alpha_2 = 0.02$.}
    \label{fig:all_optimizers_comparison}
\end{figure}

\section{Conclusion}\label{sec:conclusion}

We have introduced a generalized variable projection framework for solving inverse problems with nonlinear and nonsmooth structure, extending the classical variable projection method to encompass broader settings including $\ell_1$-penalized formulations. Our approach relies on alternating minimization and projected gradient descent in the reduced space, with convergence guarantees under mild assumptions.

To enhance convergence in practice, we proposed a preconditioned variant, \texttt{pvpal}, motivated by inexact Newton-type updates. This variant leverages curvature information from the reduced objective to accelerate descent, without requiring full second-order computations.

Extensive numerical experiments across both synthetic and real-world nonlinear problems, including ptychographic phase retrieval and learned inverse operators, demonstrate the versatility and superior performance of our approach. Our methods yield improved convergence behavior and reconstructions even in high-noise or ill-posed regimes.

This work opens avenues for further investigation, including the design of adaptive preconditioners, theoretical guarantees in the nonconvex setting, randomized extensions to {\tt vpal} and {\tt pvpal},  automatic hyperparameter tuning, and applications to large-scale scientific machine learning problems involving latent-variable structure and physics-informed models.

\section*{Acknowledgments}
D. Bianchi is supported by the Startup Fund of Sun Yat-sen University. The authors would like to thank Bracco Imaging S.p.A. for providing access to the dataset of \Cref{sec:LIPCAR} and for granting permission to publish the results.

\clearpage
\printbibliography

\end{document}